\documentclass[11pt]{amsart}

\usepackage{AFBoix2015}

\begin{document}

\author{Alberto F.\,Boix}
\address{Department of Mathematics, Universitat Polit\`ecnica de Catalunya BarcelonaTech, Av. Eduard Maristany 16, 08019, Barcelona, Spain.}
\email{alberto.fernandez.boix@upc.edu}

\author{Majid Eghbali}
\address{School of Mathematics, Institute for Research in Fundamental Sciences (IPM), P. O. Box 19395-5746, Tehran, Iran}
\email{majideghbali83@gmail.com}
\email{m.eghbali@yahoo.com}

\title[On Hellus--Lyubeznik--Yildirim's conjecture]{On Hellus--Lyubeznik--Yildirim's conjecture of local cohomology modules}

\keywords{Local cohomology, cosupport}

\subjclass[2020]{Primary 13D45, 14B15 13H05}

\begin{abstract}
The goal of this paper is to study the so--called Hellus--Lyubeznik--Yildirim (HLY) conjecture, that predicts the following: given a regular local ring $(R,\fm)$, and any ideal $I\subset R$, zero is an associated prime ideal of the Matlis dual of any non--zero local cohomology module supported on $I$. Among other results, we give some partial positive answers to this conjecture in the following cases: when $\depth (R/I)=1$, when $\depth (R/I)=2$ under some extra assumptions, when $I$ is a squarefree monomial ideal inside a formal power series ring over a field, and when $R$ is a formal power series over a discrete valuation ring of mixed characteristic.
\end{abstract}

\maketitle

\section*{Introduction}

In what follows, $(R,\mathfrak{m},\mathbb{K})$ will denote a commutative Noetherian local ring with residue field $\mathbb{K}=R/\fm$, $I\subset R$ will be an ideal, and $(-)^{\vee}$ will denote the Matlis duality functor $\Hom_R (-,E)$, where $E$ denotes a choice of injective hull of the residue field of $R$ in the category of $R$--modules. Moreover, given an integer $j\geq 0$, we denote by
\[
H_I^j (M):=\colim_{k\geq 0} \Ext_R^j \left(R/I^k, M\right)
\]
the $j$--th local cohomology group supported on $I$. On the other hand, we also set
\[
\grade (I):=\min\{j\in\mathbb{N}_0:\ H_I^j (R)\neq 0\},\ \cdim (I):=\max\{j\in\mathbb{N}_0:\ H_I^j (R)\neq 0\}.
\]
Finally, we denote by $\sqrt{I}$ the radical of the ideal $I$.


The following conjecture, originally studied by Hellus in \cite{Hellushab}, and formally raised by Lyubeznik and Yildirim in \cite{LyubeznikYildirim} motivates the content of this paper.

\begin{con}[Hellus--Lyubeznik--Yildirim's conjecture]\label{lyubeznik--yildirim conjecture}
Assume, in addition, that $R$ is regular, and let $j\geq 0$. Then, if $H_I^j (R)\neq 0$, then $(0)\in\Ass_R (H_I^j (R)^{\vee})$.
\end{con}
This conjecture stems from the interest, pointed out by Hellus in \cite{Hellushab} to study the associated prime ideals of Matlis duals of local cohomology modules. The interested reader may like to consult \cite[page 2]{Hellushab} for additional information. In what follows, in order to avoid any misunderstanding, we plan to use the following terminology.

\begin{df}\label{precising where the conjecture holds}
Let $(R,\fm)$ be a commutative Noetherian local ring, let $I\subset R$ be an ideal, and let $j\geq 0$ be an integer. We say that \textit{Hellus--Lyubeznik--Yildirim (HLY) conjecture holds at $j$} if $H_I^j (R)\neq 0$ and $(0)\in\Ass_R (H_I^j (R)^{\vee})$. We say that \textit{HLY holds} if it is true for any $\grade (I)\leq j\leq\cdim (I)$.
\end{df}

The first important contributions to this problem were made by Hellus in \cite{Hellushab}. Specifically, the main situation studied in \cite{Hellushab} is when $R=\mathbb{K}[\![x_1,\ldots,x_d]\!]$, where $\mathbb{K}$ is any field, and $I=(x_1,\ldots,x_c)$, for some $1\leq c\leq d$. We want to review this case briefly, borrowing this material from \cite[Introduction]{HelluscoassII}.

\begin{enumerate}[(i)]

\item $\Ass_R (H_{(x)}^1 (R)^{\vee})=\spec (R)-V(x)$, where $V(x)$ denote the set of prime ideals containing $x$. In particular, $(0)\in\Ass_R (H_{(x)}^1 (R)^{\vee})$.

\item $\Ass_R (H_{(x_1,\ldots,x_d)}^d(R)^{\vee})=\{(0)\}$.

\item $\Ass_R (H_{(x_1,\ldots,x_{d-1})}^{d-1}(R)^{\vee})=\{(0)\}\cup\{(f):\ f\text{ prime element,}\ f\notin (x_1,\ldots,x_{d-1})\}$.

\item All the elements of $\Ass_R (H_{(x_1,\ldots,x_{d-2})}^{d-2}(R)^{\vee})$ have height between $0$ and $2$. Moreover,
\[
\begin{cases}
(0)\in\Ass_R (H_{(x_1,\ldots,x_{d-2})}^{d-2}(R)^{\vee}),\\
\{(f):\ \text{ prime element,}\ f\notin (x_1,\ldots,x_{d-1})\}\subseteq\Ass_R (H_{(x_1,\ldots,x_{d-2})}^{d-2}(R)^{\vee}),\\
\fp\in\Ass_R (H_{(x_1,\ldots,x_{d-2})}^{d-2}(R)^{\vee})\text{ and }\height(\fp)=2\Longleftrightarrow\sqrt{\fp+(x_1,\ldots,x_{d-2})}=\sqrt{(x_1,\ldots,x_d)}.
\end{cases}
\]

\end{enumerate}

In general, it is known that HLY conjecture holds when $R$ is regular and of prime characteristic $p$ (see \cite[Theorem 1.1]{LyubeznikYildirim} and \cite[Theorem 1.2]{ZhangFmodules}). This comes from the following more general statement concerning Lyubeznik's $F$--modules \cite[Theorem 1.1]{LyubeznikYildirim}.

\begin{teo}[Lyubeznik--Yildirim]\label{HLY in prime characteristic}
Let $(R,\fm)$ be a complete Noetherian regular local ring of prime characteristic $p$, and let $\mathcal{M}$ be an $F$--finite $F$--module such that $(0)\notin\Ass_R (\mathcal{M})$. Then, we have that $(0)\in\Ass_R (\mathcal{M}^{\vee})$.
\end{teo}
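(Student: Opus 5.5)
The plan is to translate the statement into the language of generators of $F$--finite modules and then exploit the fact that Frobenius "twists" Matlis duality. Recall that, since $\mathcal{M}$ is $F$--finite, there is a finitely generated $R$--module $M_0$ and an $R$--linear map $\beta\colon M_0\to F(M_0)$ (with $F$ the Peskine--Szpiro Frobenius functor) such that $\mathcal{M}=\colim\left(M_0\xrightarrow{\beta}F(M_0)\xrightarrow{F(\beta)}F^2(M_0)\to\cdots\right)$. We may also assume $\beta$ injective. Since Matlis duality turns direct limits into inverse limits, we get
\[
\mathcal{M}^{\vee}\cong\lim_{e}\Hom_R\left(F^e(M_0),E\right).
\]

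The first key step is the identification $\Hom_R(F^e(N),E)\cong\Hom_R(N,\Hom_R(F^e_*R,E))\cong F^e_*\left(N^{\vee}\right)$ for finitely generated $N$. This uses that $R$ is complete regular, so $F^e_*R$ is flat and $\Hom_R(F^e_*R,E)\cong F^e_*E$. Consequently an element of $\mathcal{M}^{\vee}$ is a compatible sequence $(y_e)_{e\geq 0}$ with $y_e\in M_0^{\vee}$, where the transition maps are the $p^{-1}$--linear maps dual to $\beta$. Moreover $r\in R$ acts on the $e$--th coordinate by multiplication by $r^{p^e}$. Thus $(0)\in\Ass_R(\mathcal{M}^{\vee})$ amounts to finding a compatible sequence $(y_e)$ such that, for every $0\neq r\in R$, there is some $e$ with $r^{p^e}y_e\neq 0$.

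The second step uses the hypothesis $(0)\notin\Ass_R(\mathcal{M})$. Since $\Ass_R(\mathcal{M})$ is finite for $F$--finite modules (Lyubeznik), $\mathcal{M}$ is supported in a proper closed set, so we may take $\mathcal{M}\neq 0$ and pick a minimal prime $\fp$ of its support. Then $\mathcal{M}_{\fp}$ is an $F$--finite $F_{R_\fp}$--module supported only at the maximal ideal, hence a finite direct sum of copies of $E_{R_\fp}$. Localizing the generator, this forces the Frobenius structure on $(M_0)_\fp$ to be "nilpotent-free" in the appropriate sense. I would use this to show that the dual inverse system is nonzero and has surjective, or at least Mittag--Leffler, transition maps. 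The latter is automatic here because each $M_0^{\vee}$ is Artinian, so the images stabilize and the inverse limit surjects onto the stable images.

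The main obstacle is the last step: producing one compatible sequence that is not killed by any nonzero $r$ simultaneously. Each individual $r$ can be handled by the Frobenius scaling, but there are infinitely many $r$. I would first try a compactness argument. For each $r\neq 0$, let $T_r\subseteq\mathcal{M}^{\vee}$ be the submodule killed by $r$. I would show that each $T_r$ is a proper submodule which is closed in the natural linearly compact topology on the inverse limit of Artinian modules. I would then use the fact that a linearly compact module over a complete domain, endowed with such a topology, cannot be a countable (indeed arbitrary directed) union of proper closed submodules $T_r$, directed because $T_r\cup T_s\subseteq T_{rs}$. The growth $r^{p^e}$ together with the structure $\mathcal{M}_\fp\cong E_{R_\fp}^{n}$ should show that $T_r$ is proper. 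Equivalently, the dual statement is that $\mathcal{M}$ admits a surjection-like map to $E$ whose kernel contains no $r\mathcal{M}$; this is where I expect the real work to lie.
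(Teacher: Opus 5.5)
There is a genuine gap; in fact two. First, the identification your whole setup rests on is wrong. $\Hom_R(F^e(N),E)\cong\Hom_R(N,F^e_*E)$ is the module of $p^e$--linear maps $N\to E$ with the ordinary $R$--action; it is not $F^e_*(N^{\vee})$. The correct statement, which Lyubeznik--Yildirim use and the paper transports to $\Phi$ in Lemma~\ref{equivalence on Matlis reflexive}, is $F^e(N)^{\vee}\cong F^e(N^{\vee})$. Hence $\mathcal{M}^{\vee}\cong\lim\bigl(N\xleftarrow{\alpha}F(N)\xleftarrow{F(\alpha)}F^2(N)\leftarrow\cdots\bigr)$, with $N=M_0^{\vee}$ Artinian and $R$ acting in the usual way on every term. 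Your model already fails for $R=\mathbb{K}[\![x]\!]$, $\mathcal{M}=H^1_{(x)}(R)$, $M_0=R/xR$. Here $F^e(M_0)^{\vee}\cong R/x^{p^e}R$ has length $p^e$, while $M_0^{\vee}=\mathbb{K}$. Your description would make every element of $\mathcal{M}^{\vee}$ killed by $x$, whereas $\mathcal{M}^{\vee}=E^{\vee}\cong R$. So the criterion ``for every $r\neq0$ some $r^{p^e}y_e\neq0$'' is not the right one.

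Second, and more seriously, the step you flag as the main obstacle is not just unfinished: the compactness principle you propose is false, and no argument of that soft type can work. Take $\mathcal{M}=R$, which is an $F$--finite $F$--module. Then $\mathcal{M}^{\vee}=E$ is Artinian, hence linearly compact with every submodule closed. Each $T_r=(0:_E r)$, $r\neq0$, is proper and the family is directed. Yet $E=\bigcup_{r\neq0}T_r$ (even $E=\bigcup_n(0:_E x^n)$ for any $0\neq x\in\fm$), i.e. $(0)\notin\Ass_R(E)$. So properness of the $T_r$, which is all your use of the hypothesis was meant to deliver, cannot suffice; the hypothesis $(0)\notin\Ass_R(\mathcal{M})$ must enter quantitatively. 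Your second step does not supply this: localizing at a minimal prime of the support does not interact with Matlis duality over $R$, and Mittag--Leffler is automatic since $\beta$ injective makes all transition maps surjective. In Lyubeznik--Yildirim's argument, whose adaptation the paper sketches in Theorem~\ref{LY for phi-modules}, the hypothesis (with $\mathcal{M}\neq0$) guarantees $\ker(\alpha)\neq0$. One takes a nonzero socle element $b_1\in\ker(\alpha)$ and pushes it along Frobenius to $b_k\in F^{k-1}(N)$ with $(0:_R b_k)=\fm^{[p^{k-1}]}$. One then proves $(0:_R yb_k)\subseteq\fm^{p^{k-1}-k}$ for $y\in\fm\setminus\fm^k$. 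This is used to assemble $n'_{\bullet}\in\mathcal{M}^{\vee}$ with $(0:_R n'_k)\subseteq\fm^k$ for $k\geq4$, whose annihilator is $(0)$ by Krull's intersection theorem. That explicit, Frobenius-driven construction is the missing idea.
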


However, the conjecture is quite open when $R$ is regular and of characteristic zero. Actually, keeping in mind that holonomic $\mathcal{D}$--modules play a similar role in characteristic zero for studying finiteness properties of local cohomology modules, and taking into account Theorem \ref{HLY in prime characteristic}, it seems natural for us to ask the following:

\begin{quo}\label{question about coass and holonomicity: intro}
Let $\mathbb{K}$ be a field of characteristic zero, let $R=\mathbb{K}[\![x_1,\ldots,x_d]\!]$, let $\mathcal{D}$ be the ring of $\mathbb{K}$--linear differential operators on $R$, and let $\mathcal{M}$ be a holonomic $\mathcal{D}$--module. If $(0)\notin\Ass_R (\mathcal{M})$, then is it true that $(0)\in\Ass_R (\mathcal{M}^{\vee})$?
\end{quo}
One of the main goals of this paper is to show, on the one hand, that in general Question \ref{question about coass and holonomicity: intro} has a potential negative answer. On the other hand, we also plan to show that, for local cohomology modules supported on squarefree monomial ideals, the answer is positive by carrying out a mild generalization of the arguments used by Lyubeznik and Yildirim for proving the prime characteristic case.

Now, we provide a more detailed overview of the contents of this paper for the benefit of the reader. In Section \ref{section: HLY reformulations and reductions}, we present several equivalent formulations of the HLY conjecture, connecting it specially with coassociated prime ideals and cosupport of modules as studied respectively by Z\"oschinger \cite{Zoschingercoass} and Richardson \cite{Richardsoncosupport}. We also explore the behaviour of HLY under flat base change, showing that, up to the case of the completion map, it is a delicate issue. In Section \ref{section: HLY general results} we present some general positive results. Mainly, we investigate the following:

\begin{quo}\label{Sec2question}
Let $(R,\fm)$ be a regular local ring of dimension $d$ and $I$ be a non-zero ideal with $\depth (R/I)= t$. Is it true that $(0) \in \Ass_R(H^{d-t}_I(R)^{\vee})$?   
\end{quo}
Question \ref{question about coass and holonomicity: intro} in Section \ref{characteristic zero} is the main subject to consider over rings of characteristic zero. Section \ref{Mixed} is devoted to the mixed characteristic case. Our investigation shows that, in that setting, the HLY conjecture is unlikely to be true. This, jointly with the results in Section \ref{section: HLY counterexamples} we show a potential obstruction to this conjecture in the characteristic zero setting. In contrast, in Section \ref{section: HLY and squarefree monomial ideals}, we plan to show that HLY holds for local cohomology supported on squarefree monomial ideals by combining the arguments used by Lyubeznik and Yildirim jointly with the techniques developed by Singh and Walther \cite{SinghWalther2007} to study local cohomology modules under the action of pure morphisms. This technique has been already shown as fruitful to prove interesting statements about local cohomology modules (see for instance \cite{AlvarezMontaner2015,BoixEghbali2023IJAC,BoixEghbaliRendiconti}).

\section{Reformulations of HLY and possible reductions}\label{section: HLY reformulations and reductions}

The main goal of this section is to present several equivalent formulations of the HLY conjecture that connects it with some classical notions explored in the framework of Commutative Algebra. More precisely, our first aim is to link HLY conjecture with the notion of coassociated prime ideals. The set of coassociated prime ideals of a module $M$ over a commutative Noetherian local ring $(S,\fm)$ consists of the $\fp\in\spec (S)$ such that there is an $S$--module surjection $\xymatrix@1{M\ar@{->>}[r]& L}$ with $L\subseteq E$ such that $\fp=(0:_S L)$, see \cite{Zoschingercoass} and \cite{Yassemicoassociated}. We also define $\cosupp_S (M)$, the cossuport of $M$ (in the sense of Richardson \cite{Richardsoncosupport}) as the support of the Matlis dual of $M$.

Our next statement present several different equivalent reformulations of the HLY conjecture.

\begin{lm}[Reformulations of HLY conjecture]\label{reformulations of HLY}
Let $(R,\fm)$ be a commutative Noetherian regular local ring of dimension $d$, let $I\subset R$ be an ideal, and let $j\geq 0$. Then, the following statements are equivalent.

\begin{enumerate}[(i)]

\item If $H_I^j (R)\neq 0$, then $(0)\in\Ass_R (H_I^j (R)^{\vee})$.

\item If $H_I^j (R)\neq 0$, then $\mu_0 ((0),H_I^j (R)^{\vee})>0$, where $\mu_0 ((0),(-))$ denotes the $0$--th Bass number with respect to the zero ideal.

\item If $H_I^j (R)\neq 0$, then $(0)\in\coass_R(H_I^j (R))$. That is, there is a surjective $R$--module map $\xymatrix@1{H_I^j (R)\ar@{->>}[r]& L}$, with $L\cong E$.


\item If $H_I^j (R)\neq 0$, then $\cosupp_R (H_I^j (R))=\spec (R)$.

\item If $H_I^j (R)\neq 0$, then $\stsupp_R (H_I^j (R)^{\vee})=\spec (R)$.

\item If $H_I^j (R)\neq 0$, then $(0)\in\Ass_R (H_j^I (E))$, the $j$--th local homology group of $E$.
 We denote by $H_j^I$ the $j$--th right derived functor of the local homology
\[
\Lambda^I (M):=\lim_k \left(\frac{R}{I^k}\otimes_R M\right).
\]

\item If $H_I^j (R)\neq 0$, then 
\[
(0)\in\Ass_R \left(\lim_k H_{\fm}^{d-j} (R/I^k)\right).
\]

\end{enumerate}

\end{lm}

\begin{proof}
The equivalence between (i) and (ii) follows immediately from \cite[3.2.5]{Strooker1990}. The equivalence between (i) and (iii) follows from $\coass_R (H_I^j (R))=\Ass_R (H_I^j (R)^{\vee})$, and this equality was proved by Z{\"o}schinger \cite[Lemma 3.1]{Zoschingerminimax} (see also \cite[Theorem 2.4]{Yassemicoassociated}). The equivalence between (iii) and (iv) goes as follows. The second named author proved \cite[parts (i) and (ii) of Lemma 4.1]{Eghbaliformal} that $\coass_R(H_I^j (R))\subset\cosupp_R (H_I^j (R))$ and that every minimal element of $\cosupp_R (H_I^j (R))$ belongs to $\coass_R (H_I^j (R))$. He also proved \cite[part (1) of Proposition 3.3]{Eghbaliformal} that $\cosupp_R (H_I^j (R))$ is closed under specialization. In this way, it is clear that the equivalence between (iii) and (iv) follows immediately from these two facts. The equivalence between (iv) and (v) follows from $\cosupp_R (H_I^j (R))=\stsupp_R (H_I^j (R)^{\vee})$, which is due to Richardson \cite[part (i) of Theorem 2.7]{Richardsoncosupport}. The equivalence between (i) and (vi) follows from the equality $H_I^j (R)^{\vee}=H_j^I (E)$ proved by Richardson \cite[Proposition 3.1]{Richardsoncosupport}. Finally, the equivalence between (i) and (vii) is a consequence of a classical statement by Ogus \cite[2.2.3]{Ogus1973}. The proof is therefore completed.
\end{proof}

One can be tempted to extend HLY conjecture to any finitely generated $R$--module $M$. However, we can see quickly that this is in general not possible. For that purpose, we plan to use the following local duality statement obtained by Takahashi, Yoshino and Yoshizawa in \cite{TakahashiYoshinoYoshizawa2009}.

\begin{teo}\label{some local duality}
Let $(R,\mathfrak{m})$ be a commutative Noetherian complete regular local ring, let $M$ be a finitely generated $R$--module of dimension $r$, and let $K_M$ be the canonical module of $M$. Then, we have that $H_I^r (M)^{\vee}\cong\Gamma_{\mathfrak{m},I} (K_M)$.
\end{teo}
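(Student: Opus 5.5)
We want to show $H_I^r(M)^{\vee}\cong\Gamma_{\fm,I}(K_M)$, where $r=\dim M$, $K_M=\Ext_R^{d-r}(M,R)$ with $d=\dim R$, and $\Gamma_{\fm,I}(N)=\{x\in N:\ I^kx\subseteq\fm x\text{-}\ldots\}$ is, following Takahashi--Yoshino--Yoshizawa, the set of elements $x$ with $\Supp(Rx)\subseteq W(\fm,I)=\{\fp:\ \fm^k\subseteq\fp+I\text{ for some }k\}$. The plan is to use the usual local duality for $H_{\fm}^r(M)$ and describe $H_I^r(M)$ as a colimit of top local cohomologies, then dualize.

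First, since $\dim M=r$, the functor $H_I^r(-)$ is right exact on finitely generated modules of dimension at most $r$, so $H_I^r(M)\cong H_I^r(R)\otimes_R M$. In fact I would go more directly: $H_I^r(M)\cong\colim_k H_I^r(M/I^kM)$? That fails, so instead use the Mayer--Vietoris/Hartshorne–Lichtenbaum approach. For a submodule $N\subseteq M$ with $\dim M/N<r$, the long exact sequence gives $H_I^r(M)\cong H_I^r(M/N')$ modulo lower-dimensional pieces. The cleanest route is the following. By a standard result, $H_I^r(M)\cong H^r_{\fm}(M)/\sum$-type quotient: precisely, $H_I^r(M)$ is a quotient of $H_{\fm}^r(M)$? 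No — rather $H_{\fm}^r(M)\to H_I^r(M)$ does not exist in general. So I would instead prove the formula via the description $H_I^r(M)^{\vee}\cong\lim_k H_{\fm}^r(M/I^kM)^{\vee}$-dual statements: by local duality on the complete regular ring, $H_{\fm}^{r}(N)^{\vee}\cong\Ext_R^{d-r}(N,R)$ for finitely generated $N$.

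Concretely, the key steps in order: (1) Show $H_I^r(M)\cong\colim_{J}H_{\fm}^r(M/JM)$-style identification is awkward; instead use $H_I^r(M)\cong\varinjlim_{\fa}H_{\fa}^r(M)$ over $\fm$-primary... also wrong direction. So the robust plan: write $H_I^r(M)\cong H_I^r(R)\otimes_R M$ (right exactness of top local cohomology on $\dim\le r$ modules after replacing $R$ by $R/\Ann M$), and use $\Gamma_{\fm,I}=\colim_k\Gamma_{\fm}(-)$ applied through $\Gamma_{\fm,I}(N)=\colim_{k}\Gamma_{\fm}(\Hom(R/I^k,\ldots))$. (2) Dualize: $H_I^r(M)^{\vee}=\Hom_R(H_I^r(M),E)$, and write $H_I^r(M)=\colim_k\Ext^r_R(R/I^k,M)$, so the dual is $\lim_k\Ext^r(R/I^k,M)^{\vee}$. (3) Identify $\Ext^r(R/I^k,M)^{\vee}$ via local duality and the spectral sequence $H_{\fm}^p\Ext^q\Rightarrow$, keeping only the top-degree term, with $\Gamma_{\fm}$ of $K_M/I^kK_M$-type pieces. (4) Pass to the limit and recognize $\Gamma_{\fm,I}(K_M)$.

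The main obstacle is step (3)–(4): controlling that only the top corner of the spectral sequence survives and that the inverse limit commutes to give exactly the elements of $K_M$ with support in $W(\fm,I)$; I would first try to handle it using that $K_M$ satisfies $S_2$ on its support so lower corners vanish.
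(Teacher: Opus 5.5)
There is a genuine gap. The only steps you actually carry out are the formal identity $H_I^r(M)^{\vee}\cong\lim_k\Ext_R^r(R/I^k,M)^{\vee}$ and a reduction (1) that is never used afterwards. The whole content of the theorem sits in steps (3)--(4), which you leave open, and the mechanism you suggest for them does not work.

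By local duality, $\Ext_R^r(R/I^k,M)^{\vee}\cong\mathbb{H}^{d-r}_{\fm}\bigl(R/I^k\otimes^{\mathbf{L}}_R\mathbf{R}\Hom_R(M,R)\bigr)$. Already for Cohen--Macaulay $M$, where $\mathbf{R}\Hom_R(M,R)\simeq K_M[r-d]$, the $E_2$-terms on the relevant diagonal of the hypercohomology spectral sequence are $H^a_{\fm}(\Tor_a^R(R/I^k,K_M))$ for all $a\geq 0$, not only the corner $H^0_{\fm}(K_M/I^kK_M)$. The $S_2$ property of $K_M$ gives no control over these terms.

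More fundamentally, the statement contains the Hartshorne--Lichtenbaum vanishing theorem: $H_I^r(M)=0$ if and only if $\Gamma_{\fm,I}(K_M)=0$, i.e. no $\fp\in\Ass_R(M)$ with $\dim R/\fp=r$ satisfies $\sqrt{\fp+I}=\fm$. Nothing in your plan can supply that input. Even for $M=R$, where no spectral-sequence issue arises, step (4) amounts to $\lim_k H^0_{\fm}(R/I^k)=0$ for $I$ not $\fm$-primary, which is exactly HLVT.

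Some auxiliary claims are also wrong:
$\colim_k\Gamma_{\fm}(\Hom_R(R/I^k,N))$ is just $\Gamma_{\fm}(N)$, not $\Gamma_{\fm,I}(N)$.
As written, $H_I^r(M)\cong H_I^r(R)\otimes_R M$ fails for $r<d$; take $I=\fm$ and $M=R/(x)$ in $\mathbb{K}[\![x,y]\!]$.
The natural map $H^r_{\fm}(M)\to H^r_I(M)$ does exist, since $I\subseteq\fm$ gives $\Gamma_{\fm}\subseteq\Gamma_I$. It is even surjective for $r=\dim M$, by Brodmann's exact sequence and Grothendieck vanishing on the punctured spectrum.

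The paper does not reprove the statement; it quotes it from Takahashi--Yoshino--Yoshizawa. A short argument that does work combines HLVT with ordinary local duality.

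Dividing $M$ by its largest submodule of dimension $<r$ changes neither $H_I^r(M)$ nor $K_M$, so you may assume every associated prime of $M$ has dimension $r$. Put $\Lambda:=\Ass_R(M)\cap W(\fm,I)$. Using primary decomposition, choose $N\subseteq M$ with $\Ass_R(N)=\Ass_R(M)\setminus\Lambda$ and $\Ass_R(M/N)=\Lambda$.

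HLVT (this is where completeness of $R$ is used) gives $H_I^r(N)=0$. Hence $H_I^r(M)\cong H_I^r(M/N)\cong H_{\fm}^r(M/N)$, the last isomorphism because $\sqrt{I+\operatorname{Ann}_R(M/N)}=\fm$. By local duality, $H_I^r(M)^{\vee}\cong K_{M/N}$.

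Finally, $0\to K_{M/N}\to K_M\to K_N$ is exact. We have $\Ass_R(K_{M/N})=\Lambda\subseteq W(\fm,I)$, while $\Ass_R(K_N)=\Ass_R(M)\setminus\Lambda$ is disjoint from $W(\fm,I)$, and $W(\fm,I)$ is closed under specialization. Since $\Gamma_{\fm,I}(K_M)=\{x\in K_M:\operatorname{Supp}(Rx)\subseteq W(\fm,I)\}$, this gives $K_{M/N}\subseteq\Gamma_{\fm,I}(K_M)$. Conversely, the image of $\Gamma_{\fm,I}(K_M)$ in $K_N$ has its associated primes in $W(\fm,I)\cap\Ass_R(K_N)=\emptyset$, so it vanishes. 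Thus $H_I^r(M)^{\vee}\cong\Gamma_{\fm,I}(K_M)$.
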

With Theorem \ref{some local duality} in mind, we plan to show now that HLY conjecture can not be true replacing $R$ for any finitely generated $R$--module $M$ as follows.

\begin{disc}\label{using local cohomology wrt pairs}
Let $(R,\mathfrak{m})$ be a commutative Noetherian complete regular local ring, and let $M$ be a finitely generated $R$--module of dimension $r$. By Theorem \ref{some local duality}, we have, in particular, that
\[
\Ass_R (H_I^r (M)^{\vee})=\Ass_R (\Gamma_{\mathfrak{m},I} (K_M)).
\]
By \cite[Proposition 1.10]{TakahashiYoshinoYoshizawa2009}, we have that $\Ass_R (H_I^r (M)^{\vee})=\Ass_R (K_M)\cap W(\mathfrak{m},I)$, where
\[
W(\mathfrak{m},I)=\{\mathfrak{p}\in\spec (R)\mid\ \sqrt{I+\mathfrak{p}}=\mathfrak{m}\}.
\]
On the other hand, by \cite[part (c) of Lemma 1.9]{Schenzellecturenotes}, we also know that
\[
\Ass_R (K_M)=\{\mathfrak{p}\in\Ass_R (M)\mid\ \dim (R/\mathfrak{p})=r\}.
\]
If, in addition, $R$ is a domain and $M=R$, then $(0)\in\Ass_R (K_R)$. In this case, $(0)\in\Ass_R (H_I^r (M)^{\vee})$ if and only if $I$ is $\mathfrak{m}$--primary.
\end{disc}

\begin{rk}
We can also reach the same conclusion obtained in Discussion \ref{using local cohomology wrt pairs} using \cite[Lemma 4.1]{Schenzel2007}.
\end{rk}

\begin{disc}\label{HLY not in the finitely generated case}
One elementary observation that we can do at this point is that, in order that HLY conjecture holds, our local cohomology module can not be finitely generated.

Indeed, let $(R,\fm)$ be a regular Noetherian local ring, let $I\subset R$ be an ideal, and let $j\geq 0$ be an integer. If $0\neq H_I^j (R)$ is a finitely generated $R$--module, then we have, by \cite[Theorem 2.7 (vi)]{Richardsoncosupport}, that $\cosupp_R (H_I^j (R))=\{\fm\}$, so for that value we can guarantee, thanks to Lemma \ref{reformulations of HLY}, that HLY does not hold for $I$ at the $j$--th spot.

This also shows, in particular, that if HLY holds for $j$, then $j\geq f_I (R)$, where
\[
f_I (R)=\inf\{k\in\mathbb{N}_0:\ H_I^j (R)\text{ is not finitely generated}\}
\]
is Faltings' finiteness dimension relative to $I$ \cite[9.1.3]{BroSha}.

Notice that, in case $R$ contains a field, $H_I^j (R)\neq 0$ if and only if $(0:_R H_I^j (R))=0$ by the Huneke--Koh--Lyubeznik Theorem \cite[Theorem 3.6]{BoixEghbali2018}. In this way, in the equicharacteristic case, $f_I (R)=0$, so the reduction we have explained might only be potentially useful in the mixed characteristic case.
\end{disc}

\subsection{HLY and Matlis--reflexive local cohomology modules}
In this part, we explore the validity of HLY conjecture for the class of Matlis--reflexive modules over a complete regular local ring. First, we review some well--known facts.

\begin{df}
Let $(R,\fm)$ be a local ring, and let $M$ be an $R$--module.

\begin{enumerate}[(i)]

\item The \textit{evaluation map on $M$}, $\mu_M$ is defined as
\begin{align*}
& \xymatrix{M\ar[r]^-{\mu_M}& M^{\vee\vee}}\\
& m\longmapsto \mu_M (m)(f):=f(m),\ f\in M^{\vee}.
\end{align*}
It is known \cite[Chapter 10]{BroSha} that $\mu_M$ is always an $R$--module monomorphism.

\item We say that $M$ is \textit{Matlis--reflexive} provided $\mu_M$ is an $R$--module isomorphism.

\end{enumerate}
\end{df}
Matlis reflexive modules are nicely characterized by Enochs--Zink theorem (\cite[3.4.13]{Enochs1984} and \cite{Zink1974}).

\begin{teo}[Enochs--Zink]
Let $(R,\fm)$ be a local ring. Then, $M$ is Matlis reflexive if and only if it can be embedded into a short exact sequence $\xymatrix@1{0\ar[r]& M'\ar[r]& M\ar[r]& M''\ar[r]& 0}$, where $M'$ is a Noetherian $R$--module, and $M''$ is an Artinian $R$--module.
\end{teo}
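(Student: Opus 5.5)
The statement to prove is the Enochs--Zink theorem: over a local ring $(R,\fm)$, $M$ is Matlis reflexive if and only if it sits in a short exact sequence $0\to M'\to M\to M''\to 0$ with $M'$ Noetherian and $M''$ Artinian. I would prove the two implications separately. The tools are Matlis duality over the completion $\widehat{R}$ and the five lemma applied to the natural transformation $\mu\colon \mathrm{id}\to(-)^{\vee\vee}$. I take from \cite[Chapter 10]{BroSha} that $\mu_M$ is always injective, so reflexivity amounts to surjectivity of $\mu_M$.

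\emph{The easy direction.} Assume the sequence exists. Since $E$ is injective, $(-)^{\vee\vee}$ is exact, so $\mu$ gives a morphism of short exact sequences from $0\to M'\to M\to M''\to 0$ to $0\to M'^{\vee\vee}\to M^{\vee\vee}\to M''^{\vee\vee}\to 0$. By the five lemma it suffices to show that $\mu_{M'}$ and $\mu_{M''}$ are isomorphisms. An Artinian module is naturally an $\widehat{R}$-module with the same submodules, and classical Matlis duality gives $\mu_{M''}$ bijective. For the Noetherian $M'$, one has $M'^{\vee\vee}\cong M'\otimes_R\widehat{R}$, so $\mu_{M'}$ is bijective only when $M'$ is already complete. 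If $R$ is not complete, I would therefore have to impose completeness, which the paper assumes in this subsection ("complete regular local ring"). I will state the proof under that hypothesis and remark that in general one needs $R$ complete, or at least $M'$ complete.

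\emph{The hard direction.} Assume $\mu_M$ is bijective; I need a Noetherian submodule $M'$ with $M/M'$ Artinian. Write $N=M^{\vee}$, so $M\cong N^{\vee}$. I would first show that $N$ has a finitely generated submodule $F$ with $N/F$ Artinian. Given that, dualizing $0\to F\to N\to N/F\to 0$ gives $0\to (N/F)^{\vee}\to M\to F^{\vee}\to 0$. Here $F^{\vee}$ is Artinian and $(N/F)^{\vee}$ is Noetherian over $\widehat{R}=R$, which is the desired sequence.

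\emph{The main obstacle} is producing $F$ from reflexivity alone. My first attempt is the following. Choose a finitely generated $F\subseteq N$ such that the map $N/\fm N \leftarrow F$ is onto the socle-type quotient; concretely, compare $\Hom(\mathbb{K},M)$, which is dual to $N/\fm N$. The key claim is that reflexivity forces $\dim_{\mathbb{K}} N/\fm N<\infty$. I would argue by contradiction: if $N/\fm N$ were infinite dimensional, then $(N/\fm N)^{\vee}$ would be a huge $\mathbb{K}$-dual, and $\mu$ on the semisimple subquotient $\Hom(\mathbb{K},M)$ would fail to be onto, because an infinite-dimensional vector space is not isomorphic to its double dual. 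Then $F$ generated by lifts of a basis satisfies $N=F+\fm N$. The remaining task is to show that $N/F$ is Artinian, that is, $\fm$-torsion with finite socle. Here I would use that $(N/F)^{\vee}\subseteq M$ is again reflexive with $(N/F)/\fm(N/F)=0$, and that a reflexive module $X$ with $X=\fm X$ has $X^{\vee}$ $\fm$-adically separated and finitely generated, hence $X$ is Artinian. I expect this last step, passing from "$\fm$-divisible and reflexive" to "Artinian", to be the genuinely delicate point. If it resists, I would fall back on citing \cite[3.4.13]{Enochs1984}.
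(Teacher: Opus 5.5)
The paper gives no proof of this statement; it only cites Enochs and Zink. Your completeness caveat is correct and worth keeping. If $R$ is not complete, $M=R$ admits such a sequence with $M''=0$, but $\mu_R\colon R\to R^{\vee\vee}\cong\widehat{R}$ is not surjective. So the statement as printed needs $R$ complete; the correct general form adds that $R/(0:_R M)$ be complete. The paper only applies the result over complete rings.

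Under that hypothesis, your easy direction is fine. So is the bound $\dim_{\mathbb{K}} N/\fm N<\infty$, once you record two facts: submodules and quotients of reflexive modules are reflexive (snake lemma applied to $\mu$), and a reflexive $\mathbb{K}$--vector space is finite dimensional.

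The hard direction has a genuine gap, and the problem is the choice of $F$, not only the final justification. The claim that a reflexive $X$ with $X=\fm X$ is Artinian is false. Let $R$ be a complete DVR with uniformizer $\pi$ and fraction field $Q$.

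\begin{itemize}
\item Your own easy direction, applied to $0\to R\to Q\to Q/R\cong E\to 0$, shows that $Q$ is reflexive.
\item $\fm Q=Q$, yet $Q$ is neither Artinian nor $\fm$--torsion.
\item $\pi$ acts bijectively on $Q^{\vee}\neq 0$, so $Q^{\vee}$ is neither $\fm$--adically separated nor finitely generated.
\end{itemize}

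Now take $N=Q$, which is $M^{\vee}$ for the reflexive module $M=Q^{\vee}$. Since $N/\fm N=0$, your recipe produces $F=0$, and $N/F=Q$. The submodule that actually works, $R\subset Q$, is invisible modulo $\fm$.

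The error is in the dualization. The condition $X=\fm X$ dualizes to $\Soc(X^{\vee})\cong (X/\fm X)^{\vee}=0$, not to separatedness of $X^{\vee}$. What you need is an $F$ with $N/F$ $\fm$--torsion, i.e.\ $F_{\fp}=N_{\fp}$ for every $\fp\neq\fm$. Reflexivity then gives $N/F$ a finite socle, hence $N/F$ is Artinian. But this is information about the punctured spectrum, and Nakayama-type generators modulo $\fm N$ cannot supply it.

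Producing such an $F$ is the real content of the theorem and needs a genuine finiteness input. One option is Z\"oschinger's characterization of minimax modules as those all of whose quotients have finite Goldie dimension. Combine it with two facts:
\begin{itemize}
\item quotients of reflexive modules are reflexive;
\item a reflexive module has finite Goldie dimension. Indeed, for an infinite direct sum $\bigoplus X_i$ of nonzero modules, the bidual surjects onto $\prod X_i^{\vee\vee}$, while the image of $\mu$ lies in $\bigoplus X_i^{\vee\vee}$.
\end{itemize}
Without such an input your argument reduces to the citation of Enochs, which is exactly what the paper does.
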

With all these ingredients in mind, we can go straight to the following:

\begin{disc}
Let $(R,\fm)$ be a complete regular local ring, and let $M$ be a Matlis reflexive $R$--module. By Enochs--Zink theorem, we have a short exact sequence
\[
\xymatrix{0\ar[r]& M'\ar[r]& M\ar[r]& M''\ar[r]& 0,}
\]
where $M'$ is Noetherian and $M''$ is Artinian. In this way, we have, by \cite[Theorem 2.7 (vi)]{Richardsoncosupport}, that
\[
\cosupp_R (M')=\begin{cases}
\{\fm\},\text{ if }M'\neq 0,\\
\emptyset,\text{ if }M'=0.
\end{cases}
\]
Moreover, we also know that
\[
\cosupp_R (M'')=\bigcup_{\fp\in\coass_R (M'')}V(\fp).
\]
Keeping in mind \cite[Theorem 2.7 (v)]{Richardsoncosupport} that $\cosupp_R (M)=\cosupp_R (M')\cup\cosupp_R (M'')$, and that $\coass_R (M)$ are the minimal elements of $\cosupp_R (M)$, we have that
\[
(0)\in\coass_R (M)\Longleftrightarrow (0)\in\coass_R (M'').
\]
Now, since $M''$ is Artinian, we know the existence of the following short exact sequence
\[
\xymatrix{0\ar[r]& M''\ar[r]& E^{\oplus t}\ar[r]& Q\ar[r]& 0,}
\]
where $t=\dim_{R/\fm}\Soc (M'')$, and $\Soc (M'')$ denotes the socle of $M''$ \cite[1.2.18]{BrunsHerzog1993}. Moreover, we know, by the Hartshorne--Lichtenbaum vanishing theorem \cite[7.3.2]{BroSha}, that $\coass_R (E^{\oplus t})=\coass_R (E)=\{(0)\}$, and notice that the rightmost equality is where we are using that $R$ is regular. In this way, we know that $\spec (R)=\cosupp_R (E)=\cosupp_R (M'')\cup\cosupp_R (Q)$. Summing up, what we can guarantee is that, if $(0)\notin\coass_R (Q)$, then $(0)\in\coass_R (M'')$ and therefore $(0)\in\coass_R (M)$.
\end{disc}

\subsection{HLY conjecture and base change}
The goal of this part is to study HLY conjecture with respect to base change in both directions. We start by proving that HLY ascends pretty well. The precise statement reads as follows.


\begin{lm}[HLY ascend]\label{ascending HLY}
Let $\xymatrix@1{R\ar[r]& S}$ be a ring homomorphism of regular Noetherian local rings with respective maximal ideals $\fm_R$ and $\fm_S$ such that $S$ is a flat $R$--module and that $\dim (R)=\dim (S)$. Let $I\subset R$ be an ideal and let $j\geq 0$ be an integer. Then, if $H_I^j (R)\neq 0$ and $(0)\in\coass_R (H_I^j(R))$, then either $H_{IS}^j (S)=0$ or $(0)\in\coass_S (H_{IS}^j(S))$.
\end{lm}

\begin{proof}
Since $\xymatrix@1{R\ar[r]& S}$ is a flat and local homomorphism of Noetherian local rings, we have \cite[Exercise 5.7.17]{Schoutensultrabook} that $\dim S =
\dim R+ \dim S/\fm_RS$. Since $\dim S = \dim R$, we have $\dim S/\fm_RS = 0$ and consequently $\sqrt{\fm_RS}=\fm_S$.

Assume that $(0)\in\coass_R (H_I^j(R))$ and that $H_{IS}^{j}(S)\neq 0$. Since $(0)\in\coass_R (H_I^{j}(R))$ and $R$ is regular, we have a surjection $\xymatrix@1{H_I^{j}(R)\ar@{->>}[r]& H_{\fm_R}^{\dim (R)}(R)}$. Since tensor product is right exact, tensoring this surjection with $S$ we obtain the surjection $\xymatrix@1{H_I^{j}(R)\otimes_R S\ar@{->>}[r]& H_{\fm_R}^{\dim (R)}(R)\otimes_R S}$. By flat base change for local cohomology modules, this surjection is equivalent to the following one:
\[
\xymatrix{H_{IS}^{j}(S)\ar@{->>}[r]& H_{\fm_R S}^{\dim (R)}(S).}
\]
Since $\dim (R)=\dim (S)$, $\sqrt{\fm_R S}=\fm_S$, and $S$ is regular, this surjection is equivalent to the surjection $\xymatrix@1{H_{IS}^{j}(S)\ar@{->>}[r]& H_{\fm_S}^{\dim (S)}(S)\cong E_S,}$
where $E_S$ denotes a choice of the injective hull of the residue field of $S$ as $S$--module. The proof is therefore completed.
\end{proof}

Our next goal is to discuss whether we can reduce HLY by flat base change. More precisely, we have in mind the following:

\begin{quo}[HLY descend question]\label{HLY and flat base change}
Let $\xymatrix@1{R\ar[r]& S}$ be a flat local ring homomorphism of regular Noetherian local rings. Let $I\subset R$ be an ideal, and let $j\geq 0$ be an integer. Assume that $(0)\in\coass_S (H_{IS}^j (S))$. Is it true that $(0)\in\coass_R (H_I^j (R))$?
\end{quo}
Unfortunately, as pointed out for instance by Yassemi in \cite{Yassemicoasssurvey}, there are no general results connecting coassociated prime ideals under base change. Actually, one of the few known facts is the case when one passes from a local ring to its completion. Indeed, we bring the following result.




\begin{lm}\label{reduce HLY to the complete case}
Let $(R,\fm)$ be a commutative Noetherian regular local ring, let $I\subset R$ be an ideal, and let $j\geq 0$ be an integer. Assume that $(0)\in\coass_{\widehat{R}}(H_I^j (R)\otimes_R \widehat{R})$. Then, we have that $(0)\in\coass_R (H_I^j (R))$.
\end{lm}

\begin{proof}
The result follows immediately combining \cite[Proposition 3.3]{Zoschingercoass} jointly with the well known fact that $\widehat{R}$ is a regular local ring. The proof is therefore completed.
\end{proof}
Inspired by the above results, we can formulate a generalization of Lemma \ref{reduce HLY to the complete case} as follows.

\begin{lm}\label{HLY reduced to the Henselian case: faith}
Let $\xymatrix@1{R\ar[r]& S}$ be a flat local homomorphism of commutative Noetherian regular local rings, let $I\subset R$ be an ideal, and let $j\geq 0$ be an integer. Assume that
\[
\coass_R (H_I^j (R))\supseteq\{\fp\cap R:\ \fp\in\coass_S (H_I^j (R)\otimes_R S)\}.
\]
Then, if $(0)\in\coass_S (H_I^j (R)\otimes_R S)$, then $(0)\in\coass_R (H_I^j (R))$.
\end{lm}
The other known case is when we tensor a module with a finitely generated module. We refer to \cite[Folgerung 3.2]{Zoschingerminimax} for details. Thanks to Z\"oschinger's result, we obtain the following interesting statement.


\begin{lm}[HLY under finite flat extensions]\label{HLY and finite flat extensions}
Let $\xymatrix@1{R\ar[r]& S}$ be a flat local homomorphism of regular local rings such that $S$, regarded as $R$--module by restriction of scalars, is finitely generated. Let $I\subset R$ be an ideal, and let $j\geq 0$ be an integer. Then, we have that
\[
\coass_R (H_I^j (R)\otimes_R S)=\coass_R (H_I^j (R))\cap\stsupp_R (S).
\]
\end{lm}

The next step one can try to do is to consider passing the HLY conjecture from one ideal to another one. Let us formulate things in a more precise way.

\begin{prop}\label{HLY and suitable quotients}
Let $\mathbb{K}$ be a field of characteristic zero, let $R=\mathbb{K}[\![x_1, \ldots, x_d]\!]$ and $I$ be an ideal of $R$. Assume that there exists an ideal $Q\subset R$ such that there is a surjection $\xymatrix@1{H_I^i (R)\ar@{->>}[r]& H_{\fm}^j (R/Q)}$. Then, if $(0)\in\coass_R (H_{\fm}^j (R/Q))$, then we have that $(0)\in\coass_R (H_I^i (R))$.   
\end{prop}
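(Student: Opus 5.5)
The plan is to compose surjections, using the description of coassociated primes from Lemma \ref{reformulations of HLY}. Recall that $(0)\in\coass_R(M)$ means there is an $R$--module surjection $M\twoheadrightarrow L$ with $L\subseteq E$ and $(0:_R L)=(0)$. Since $R=\mathbb{K}[\![x_1,\ldots,x_d]\!]$ is a complete regular local domain, this is equivalent to having a surjection $M\twoheadrightarrow E$. This is exactly reformulation (iii) of Lemma \ref{reformulations of HLY}, whose proof does not really use that $M$ is a local cohomology module. More robustly, we can argue directly with the definition of $\coass$ and never need the identification $L\cong E$.

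First, by hypothesis $(0)\in\coass_R(H_{\fm}^j(R/Q))$. Hence there is a surjection $\pi\colon H_{\fm}^j(R/Q)\twoheadrightarrow L$ with $L\subseteq E$ and $\operatorname{Ann}_R(L)=(0)$. Second, compose $\pi$ with the given surjection $\varphi\colon H_I^i(R)\twoheadrightarrow H_{\fm}^j(R/Q)$. The composite $\pi\circ\varphi\colon H_I^i(R)\twoheadrightarrow L$ is again surjective, with the same target $L\subseteq E$ and the same annihilator $(0)$. By the definition of coassociated primes recalled at the start of Section \ref{section: HLY reformulations and reductions}, this gives $(0)\in\coass_R(H_I^i(R))$. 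Equivalently, one may phrase this as $\coass_R(N)\subseteq\coass_R(M)$ whenever $M\twoheadrightarrow N$. Dually, $N^{\vee}\hookrightarrow M^{\vee}$ gives $\Ass_R(N^{\vee})\subseteq\Ass_R(M^{\vee})$, and the conclusion then also follows through the equality $\coass=\Ass\circ(-)^{\vee}$ of Z\"oschinger used in Lemma \ref{reformulations of HLY}.

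There is essentially no obstacle. The only point needing care is to make sure that the notion of coassociated prime used is valid for arbitrary (not necessarily local cohomology) modules such as $H_{\fm}^j(R/Q)$. This holds because Z\"oschinger's definition is for arbitrary modules, and the equality $\coass_R(M)=\Ass_R(M^{\vee})$ holds in general over a complete local ring. Neither the characteristic-zero hypothesis nor the power series structure is actually needed beyond ensuring that $R$ is a complete regular local domain.
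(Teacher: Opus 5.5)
Your proposal is correct and follows the paper's proof: the paper simply observes that the given surjection from $H_I^i(R)$ onto $H_{\fm}^j(R/Q)$ forces $\coass_R(H_{\fm}^j(R/Q))\subseteq\coass_R(H_I^i(R))$, and you establish exactly this by composing surjections. Your side remarks are also accurate: $\coass_R(M)=\Ass_R(M^{\vee})$ follows directly from the definition for any module over a Noetherian local ring, and the characteristic-zero hypothesis plays no role.
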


\begin{proof}
The surjection $\xymatrix@1{H_I^i (R)\ar@{->>}[r]& H_{\fm}^j (R/Q)}$ implies that $\coass_R (H_{\fm}^j (R/Q))\subseteq\coass_R (H_I^i (R))$. The proof is therefore completed.
\end{proof}

\begin{rk}\label{when these conditions hold}
The reader might ask when the assumptions of Proposition \ref{HLY and suitable quotients} are satisfied. Indeed, for $i=\dim R$, the conditions hold by \cite{Egh-Sch12}. For $i=\dim R-1$ and in the case that $I= \fa \cap \fb$,  $\sqrt{\fm}=\fa+\fb$, we have the desired epimorphism, as we plan to show at Proposition \ref{disconnected}. In addition, in the case $\stsupp H^i_I(R) \subseteq \{\fm\}$ we have the epimorphism $\xymatrix@1{H_I^i (R)\ar@{->>}[r]& H_{\fm}^{\dim R} (R)}$, see Proposition \ref{zero dimensional case}.
\end{rk}

\section{General results}\label{section: HLY general results}
The aim of this section is to present some general cases where the HLY conjecture holds. To be more precise, we answer Question \ref{Sec2question} in some cases. First, recall that, when $\depth (R/I)= 0$, Grothendieck's vanishing theorem implies that $\cdim (I) \leq \dim (R)$. The first case we have in mind is the one where $\cdim (I)=\dim (R)$. The precise statement can be formulated as follows.

\begin{prop}\label{the top cdim case}
Let $(R,\fm,\mathbb{K})$ be a commutative Noetherian regular local ring of dimension $d$, and let $I\subseteq R$ be an ideal with $\cdim (I)=d$. Then, we have that $\{(0)\}=\coass_R (H_I^d (R))$.
\end{prop}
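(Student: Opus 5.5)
The strategy is to compute $H_I^d(R)^{\vee}$ explicitly and then read off its associated primes. By Lemma \ref{reformulations of HLY} (and Z\"oschinger's equality $\coass_R(H_I^j(R))=\Ass_R(H_I^j(R)^{\vee})$), it suffices to prove
\[
\Ass_R\left(H_I^d(R)^{\vee}\right)=\{(0)\}.
\]

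First I reduce to the complete case. Since $H_I^d(R)\otimes_R\widehat{R}\cong H_{I\widehat{R}}^d(\widehat{R})$ and $\widehat{R}$ is faithfully flat, this module is nonzero and $\cdim(I\widehat{R})=d$. Lemma \ref{reduce HLY to the complete case} gives $(0)\in\coass_R(H_I^d(R))$ once we know $(0)\in\coass_{\widehat{R}}(H_{I\widehat{R}}^d(\widehat{R}))$. For the reverse inclusion I note that $H_I^d(R)$ and its dual are unchanged as modules when we pass to $\widehat{R}$: the module $H_I^d(R)$ is a quotient of $H_{\fm}^d(R)$ (see below), hence Artinian and therefore naturally an $\widehat{R}$-module. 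Moreover, $\coass_R$ is the contraction of $\coass_{\widehat{R}}$ (Z\"oschinger). So I may assume $R$ is complete.

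Next I use right exactness of the top local cohomology. The functor $H_I^d(-)$ is right exact on $R$-modules, because $d$ is the top degree. Hence $H_I^d(R)\cong H_I^d(R)\otimes_R R$, and more usefully $H_I^d(M)\cong H_I^d(R)\otimes_R M$ for every $M$. By the Hartshorne--Lichtenbaum vanishing theorem, $H_I^d(R)\neq 0$ forces $\dim R/(I\widehat{R}+\fp)=0$ for some minimal prime $\fp$; since $R$ is a complete domain, this means $I$ is $\fm$-primary. That already gives $H_I^d(R)=H_{\fm}^d(R)\cong E$. The conclusion is then
\[
H_I^d(R)^{\vee}\cong E^{\vee}\cong \widehat{R}=R,
\]
which is a domain, so $\Ass_R(R)=\{(0)\}$. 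Without completeness, the same argument gives $I\widehat{R}$ $\widehat{\fm}$-primary, which implies $I$ is $\fm$-primary. So in fact no reduction is needed: $H_I^d(R)=H_{\fm}^d(R)\cong E$ and $E^{\vee}\cong\widehat{R}$, a domain whose associated primes as an $R$-module are the contractions of $(0)$. As $R\to\widehat{R}$ is injective, that contraction is $(0)$.

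The main obstacle is justifying the Hartshorne--Lichtenbaum step correctly. For a complete local domain $\widehat{R}$ (regular, hence a domain), $H_{I\widehat{R}}^d(\widehat{R})\neq0$ if and only if $\sqrt{I\widehat{R}}=\widehat{\fm}$. Together with faithful flatness of completion, this yields $\sqrt{I}=\fm$. Once that is in place, the rest is formal: $\cosupp$ equals $\spec R$, and the minimal coassociated prime is $(0)$, the only associated prime of the domain $\widehat{R}$ over $R$.
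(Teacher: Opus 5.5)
Your proof is correct and follows essentially the paper's argument: Hartshorne--Lichtenbaum, applied through the domain $\widehat{R}$, forces $I$ to be $\fm$-primary, so $H_I^d(R)\cong H_{\fm}^d(R)\cong E$, and its Matlis dual $\widehat{R}$ (the paper writes this as $K_R$ via local duality) is a domain with $\Ass_R(\widehat{R})=\{(0)\}$. Your observation that no reduction to the complete case is needed is a small improvement, since the paper's completion lemma by itself only transfers the membership $(0)\in\coass_R(H_I^d(R))$ and not the full equality $\coass_R(H_I^d(R))=\{(0)\}$.
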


\begin{proof}
By Lemma \ref{reduce HLY to the complete case} we can assume, without loss of generality, that $R$ is regular and complete with respect to the $\fm$--adic topology. Under this additional assumption, as byproduct of the Non--Vanishing Theorem jointly with the Hartshorne--Lichtenbaum vanishing theorem, we have that $\cdim (I)=d$ if and only if $I$ is $\fm$--primary. In this case, keeping in mind that $R$ is regular and complete, we obtain that $\Ass_R (H_I^d (R)^{\vee})=\Ass_R (H_{\fm}^d (R)^{\vee})=\Ass_R (K_R)=\{(0)\}$, where we are using local duality for local cohomology modules. The proof is therefore completed. 
\end{proof}

The second case we want to prove is the one where our quotient ring $R/I$ has depth $1$. In order to tackle this issue, we plan to recover and extend a result obtained by Hellus and St\"uckrad \cite[Theorem 2.2]{HellusStuckradtop} concerning Matlis dual of top local cohomology modules. The following statement should be compared with \cite[Theorem 3.2.7]{Hellushab}.

\begin{teo}\label{HStextend}
Let $(R,\fm)$ be a regular local ring of dimension $d$ and $I$ be a non-zero ideal with $\depth (R/I)= 1$. Then $(0) \in \coass_R(H^{d-1}_I(R))$.
\end{teo}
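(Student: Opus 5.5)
The plan is to first reduce to the complete case and then produce an explicit surjection from $H_I^{d-1}(R)$ onto $E\cong H_{\fm}^d(R)$. By Lemma \ref{reduce HLY to the complete case}, together with flat base change $H_I^{d-1}(R)\otimes_R\widehat{R}\cong H_{I\widehat{R}}^{d-1}(\widehat{R})$ and the fact that $\depth(\widehat{R}/I\widehat{R})=\depth(R/I)=1$, we may assume that $R$ is complete. By the reformulations in Lemma \ref{reformulations of HLY}(iii), it then suffices to exhibit a surjection $\xymatrix@1{H_I^{d-1}(R)\ar@{->>}[r]& E}$. Note also that $\dim(R/I)\geq\depth(R/I)=1$, so $I$ is not $\fm$--primary. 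Hence the Hartshorne--Lichtenbaum vanishing theorem (as used in Proposition \ref{the top cdim case}) gives $H_I^d(R)=0$.

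The key step is to use an element $x\in\fm$ that is a non--zero--divisor on $R/I$; it exists because $\depth(R/I)=1$. The idea is that depth one should force the ``last'' local cohomology to be detected after adjoining $x$. Consider the exact sequence attached to $\Gamma_{xR}$ and localization at $x$, combined with the Mayer--Vietoris/composite functor sequence for $\Gamma_{I+xR}=\Gamma_{xR}\circ\Gamma_I$. Since $H_I^d(R)=0$, this gives
\[
\xymatrix{H_I^{d-1}(R)\ar[r]& H_I^{d-1}(R)_x\ar[r]& H_{I+xR}^{d}(R)\ar[r]& 0.}
\]
More usefully, the Grothendieck spectral sequence $H^p_{xR}(H^q_I(R))\Rightarrow H^{p+q}_{I+xR}(R)$ degenerates at the corner $p=1$, $q=d-1$, because $H^p_{xR}=0$ for $p\geq 2$. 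This yields a surjection
\[
\xymatrix@1{H^{d-1}_I(R)_x\ar@{->>}[r]& H^1_{xR}(H^{d-1}_I(R))\cong H^d_{I+xR}(R).}
\]
If we can arrange that $I+xR$ is $\fm$--primary, then $H^d_{I+xR}(R)=H^d_{\fm}(R)\cong E$.

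Next, we pass from $H_I^{d-1}(R)_x$ back to $H_I^{d-1}(R)$. The plan is to compose with the natural maps: $H^{d-1}_I(R)\to H^{d-1}_I(R)_x$ is not surjective in general. So instead we use the Čech description $H^1_{xR}(M)=M_x/M$, which shows that $E$ is a quotient of $M_x$ by the image of $M$. We then replace $H^{d-1}_I(R)$ by $H^{d-1}_{I}(R)\otimes_R R_x$ only at the level of coassociated primes, using that $\coass$ of a localization $M_x$ relates to $\coass M$ via Z\"oschinger's results (as in Lemma \ref{HLY reduced to the Henselian case: faith}). Alternatively, and this is what I would try first, we follow Hellus--St\"uckrad \cite[Theorem 2.2]{HellusStuckradtop}. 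Their argument produces, for complete $R$, a direct surjection $H^{d-1}_I(R)\twoheadrightarrow H^{d-1}_{J}(R)$ for a suitable ideal $J\supseteq I$ with $\dim R/J=1$ and $\depth R/J=1$, obtained from a minimal primary decomposition of $I$. The modification is to discard the components of dimension $\geq 2$ via the Mayer--Vietoris sequence, again using $H^d_{(-)}=0$ for non $\fm$--primary ideals. Once $\dim R/J=1$, any $x$ regular on $R/J$ makes $J+xR$ $\fm$--primary. In that case $H^{d-1}_J(R)$ surjects onto $H^d_{\fm}(R)$ via the connecting map in the Mayer--Vietoris sequence for $J$ and $xR$, which uses $H^{d-1}_J(R)_x\to H^d_{\fm}(R)\to 0$ together with the Hartshorne--Lichtenbaum vanishing $H^{d-1}_{xR}=0$ for $d\geq 3$. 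Proposition \ref{HLY and suitable quotients} then transfers the conclusion to $I$.

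The main obstacle is the passage from depth one to a one--dimensional piece: $\depth(R/I)=1$ does not force $\dim(R/I)=1$. The delicate point is showing that a suitable $J$ exists, with $H^{d-1}_I(R)\twoheadrightarrow H^{d-1}_J(R)\neq 0$ and $J+xR$ $\fm$--primary. This is exactly where the depth hypothesis must be used, presumably through the fact that $\fm\notin\Ass(R/I)$ while some associated prime yields $H^1_{\fm}(R/I)\neq0$. Via Ogus' description in Lemma \ref{reformulations of HLY}(vii), this produces a non--zero $\lim_k H^1_{\fm}(R/I^k)$ whose torsion--free part one must control. I expect the bulk of the work to lie in checking that this limit has $(0)$ as an associated prime, and I would attack it through the Hellus--St\"uckrad description of the Matlis dual of top local cohomology.
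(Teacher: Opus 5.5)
Your set-up matches the paper's: you pass to $\widehat{R}$, get $H^d_I(R)=0$ from Hartshorne--Lichtenbaum, choose an element with $I+xR$ $\fm$--primary, and use Brodmann's sequence $0\to H^{d-1}_I(R)\to H^{d-1}_I(R)_x\to H^d_{\fm}(R)\to 0$. However, you never produce a surjection from $H^{d-1}_I(R)$ itself onto $E$, and each of your workarounds fails.

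\emph{Localization.} Coassociated primes do not descend from $M_x$ to $M$. For a DVR with uniformizer $x$, $R_x$ surjects onto $R_x/R\cong E$, so $(0)\in\coass_R(R_x)$, while $\coass_R(R)=\{\fm\}$. In your situation $(0)\in\coass_R(H^{d-1}_I(R)_x)$ is automatic from Brodmann's sequence and gives no information.

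\emph{Reduction to $J\supseteq I$.} This goes the wrong way. The natural maps are $H^{d-1}_J(R)\to H^{d-1}_I(R)$, and Mayer--Vietoris for $\sqrt{I}=J\cap K$ reads $H^{d-1}_J(R)\oplus H^{d-1}_K(R)\to H^{d-1}_I(R)\to H^d_{J+K}(R)\to 0$. This gives no surjection $H^{d-1}_I(R)\to H^{d-1}_J(R)$.

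\emph{Mayer--Vietoris for $J$ and $xR$.} Here the connecting map starts at $H^{d-1}_{J\cap xR}(R)$, not at $H^{d-1}_J(R)$. What you get is $H^{d-1}_J(R)\to H^{d-1}_{J\cap xR}(R)\to H^d_{\fm}(R)\to 0$, so $E$ is again a \emph{cokernel} of a map out of $H^{d-1}_J(R)$. That is the same obstruction you already spotted for $M\to M_x$. (Also, $H^{d-1}_{xR}(R)=0$ simply because $xR$ is principal, not by Hartshorne--Lichtenbaum.)

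The ingredient you are missing from the paper is the right exactness of $H^{d-1}_I(-)$ forced by $H^d_I(R)=0$. Apply it to $0\to R\to R\to R/x^kR\to 0$ (multiplication by $x^k$), and combine it with the Independence Theorem, $\sqrt{I+x^kR}=\fm$, and the Gorensteinness of $R/x^kR$. This gives
\[
H^{d-1}_I(R)/x^kH^{d-1}_I(R)\cong H^{d-1}_I(R/x^kR)\cong H^{d-1}_{\fm}(R/x^kR)\cong (0:_E x^k),
\]
and the paper then exhausts $E=\bigcup_k(0:_Ex^k)$. These surjections are compatible only up to multiplication by $x$: the projection $R/x^{k+1}R\to R/x^kR$ induces $\cdot x\colon (0:_Ex^{k+1})\to(0:_Ex^k)$. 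So gluing them into a single surjection onto $E$ needs an additional argument, which the paper does not spell out.

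The obstacle you isolate is not a technicality. The existence of an element with $I+xR$ $\fm$--primary forces $\dim R/I=1$, which the paper's proof tacitly assumes, and without it the statement is false. Take $R=\mathbb{C}[\![x_1,x_2,x_3,x_4]\!]$ and let $I$ be the kernel of $R\to\mathbb{C}[\![s,t]\!]$ given by $x_1\mapsto s^4$, $x_2\mapsto s^3t$, $x_3\mapsto st^3$, $x_4\mapsto t^4$.
\begin{enumerate}[(i)]
\item $R/I$ is a two--dimensional non--Cohen--Macaulay domain, so $\depth(R/I)=1$.
\item Its punctured spectrum is connected, so the second vanishing theorem gives $H^3_I(R)=0$.
\item Hence $\coass_R(H^3_I(R))=\emptyset$.
\end{enumerate}
So no $J$ as in your plan can exist in general. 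The result should be stated, and proved, under the hypothesis $\dim R/I=1$.
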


\begin{proof} 
Thanks to Lemma \ref{reduce HLY to the complete case} we can assume, without loss of generality, that $R$ is a complete regular local ring. Now, observe that our assumption $\depth (R/I)= 1$ implies that $\cdim (I)\leq d-1$. In this way, by Prime Avoidance \cite[Proposition 1.11]{AtiyahMac} there is a non--zero $y\in R$ not belonging to any minimal prime of $I$ such that $I+(y)$ is $\fm$--primary. These two remarks, jointly with the short exact sequence
\[
\xymatrix{0\ar[r]& H_{(y)}^1 (H_I^{d-1}(R))\ar[r]& H_{\fm}^d (R)\ar[r]& H_{(y)}^0 (H_I^d (R))\ar[r]& 0}
\]
implies that there is an $R$--module isomorphism
\begin{equation} \label{firstiso}
H_{\fm}^d (R)\cong H_{(y)}^1 (H_I^{d-1}(R)).
\end{equation}
On the other hand, we also consider the short exact sequence
\[
\xymatrix{0\ar[r]& R\ar[r]^-{\cdot y}& R\ar[r]& R/(y)\ar[r]& 0.}
\]
Now, by taking the long exact sequence attached to the functor $\Gamma_I$ we obtain the exact sequence $\xymatrix@1{H_I^{d-1}(R)\ar[r]& H_I^{d-1} (R/(y))\ar[r]& H_I^d (R)}$. On the one hand, since $\cdim (I)\leq d-1$ we have, in particular, that $H_I^d (R)=0$. On the other hand, by using the Independence Theorem \cite[4.2.1]{BroSha} jointly with the fact that $I+(y)$ is $\fm$--primary we have that
\[
H_I^{d-1} (R/(y))\cong H_{I+(y)}^{d-1} (R/(y))\cong H_{\fm}^{d-1} (R/(y)).
\]
Moreover, since $y$ is a non--zero divisor on $R$, we have that $R/(y)$ is a Gorenstein local ring of dimension $d-1$, hence $H_{\fm}^{d-1} (R/(y))\cong E_{R/(y)}$, where $E_{R/(y)}=(0:_E y)$; that is, $H_{\fm}^{d-1}(R/(y))$ are the elements of $E$ that are annihilated by $y$. Replacing $y$ by $y^k$ for any $k\geq 1$, we can show that that there is surjection
\begin{equation} \label{firstsur}
\xymatrix{H_I^{d-1}(R)\ar@{->>}[r]^-{\psi}&  H_{(y)}^0(E).}
\end{equation}
Now, consider Brodmann's exact sequence \cite[8.1.2]{BroSha} at the $(n-1)$--spot.
\[
\xymatrix{H_{I+(y)}^{d-1}(R)\ar[r]& H_I^{d-1}(R)\ar[r]& H_I^{d-1}(R)_y\ar[r]& H_{I+(y)}^{d}(R)\ar[r]& H_I^d (R).}
\]
As already observed, we know that $H_{I+(y)}^{d-1}(R)=0=H_I^d (R)$, so this exact sequence boils down to the following short exact one.
\begin{equation}\label{Brodmann exact sequence}
\xymatrix{0\ar[r]& H_I^{d-1}(R)\ar[r]& H_I^{d-1}(R)_y\ar[r]^-{g}& H_{\fm}^{d}(R)\ar[r]& 0.}
\end{equation}
Now, our goal is to construct a surjective map from $H_I^{d-1}(R)$ to $H_{\fm}^d (R)=E$. With that purpose in mind, let $0\neq\zeta\in E$. Since $E=H_{\fm}^d (R)$, and by the aid of (\ref{firstiso}) we have $E=H_{(y)}^1 (H_I^{d-1}(R))$, this implies that any element of $E$ is annihilated by some power of $y$. So, there is $j\geq 1$ such that
$y^j\cdot\zeta=0$. In this case, by \eqref{firstsur} we already know that $\zeta=\psi (\eta_0)$ for some $\eta_0\in H_I^{d-1}(R)$. This finally shows that there is a surjection from $H_I^{d-1}(R)$ to $E$, as we wanted to prove.
\end{proof}

\begin{lm}
Let $(R,\fm)$ be a regular local ring of dimension $d$, and let $I,J\subset R$ be two ideals. Suppose that there exists a non-zero ring homomorphism $\xymatrix@1{R/I\ar[r]^{\psi}& R/J}$ such that it induces the surjection $\xymatrix@1{H_{\fm}^j (R/I)\ar@{->>}[r]& H_{\fm}^j (R/J)}$ for every $j$. If $\depth R/J=1$, then $(0)\in \coass_R(H^{d-1}_I(R))$. 
\end{lm}

\begin{proof}
It is clear by the assumptions that $\depth R/I \leq 1$. By virtue of Theorem \ref{HStextend}, it is enough to show that $\depth R/I=1$. In the contrary, suppose that $\depth R/I=0$, that is $\fm \in \Ass R/I$. It forces to have a nonzero homomorphism $\xymatrix@1{R/\fm\ar[r]^{\phi}& R/I}$. Now combining $\psi$ and $\phi$ to produce a non zero homomorphism from $R/\fm$ to $R/J$, yields that $\depth R/J=0$, a contradiction. 
\end{proof}

For a local ring $(R,\fm)$, Hartshorne's result \cite[Proposition 2.1]{Hartshorne1962} says that if $\spec R \setminus \{\fm\}$ is disconnected, then the depth of $R$ is at most $1$. This connectedness statement was generalized by Hellus in \cite{Hellusconnectedness} for cohomologically complete intersection ideals, see also \cite[Theorem 3.2 and Corollary 3.3]{Schenzelconnectedness}.

\begin{prop} \label{disconnected}
Let $(R,\fm)$ be a regular local ring of dimension $d$ and $I \neq 0$ be an ideal of $R$ such that $(0)\notin\Ass_R (R/I)$. Suppose that $V(I) \setminus \{\fm\}$ is disconnected. Then, we have that $(0) \in \coass_R (H^{d-1}_I(R))$.   
\end{prop}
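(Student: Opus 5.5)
We reduce to the complete case and then produce a surjection $H_I^{d-1}(R)\twoheadrightarrow H_{\fm}^d(R)\cong E$ from the Mayer--Vietoris sequence, which is exactly what Remark \ref{when these conditions hold} anticipates. By Lemma \ref{reduce HLY to the complete case}, applied with $\widehat{R}$ flat over $R$ and $H_I^{d-1}(R)\otimes_R\widehat{R}\cong H_{I\widehat{R}}^{d-1}(\widehat{R})$, we may assume that $R$ is complete. We have to check that the hypotheses pass to $\widehat{R}$. The condition $(0)\notin\Ass_R(R/I)$ simply says $I\neq 0$, and this clearly survives. Disconnectedness of the punctured spectrum $V(I)\setminus\{\fm\}$ also survives completion: a disconnection is given by ideals $\fa,\fb$ with $\sqrt{\fa\cap\fb}=\sqrt I$, $\sqrt{\fa+\fb}=\fm$ and $\fa,\fb$ not $\fm$-primary, and all of these conditions are preserved under $\fm$-adic completion.

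So fix such $\fa,\fb$ with $\sqrt{I}=\sqrt{\fa\cap\fb}$, $\sqrt{\fa+\fb}=\fm$, and both $\fa,\fb$ not $\fm$-primary. Local cohomology depends only on radicals, so the Mayer--Vietoris sequence reads
\[
\xymatrix{H_{\fa}^{d-1}(R)\oplus H_{\fb}^{d-1}(R)\ar[r]& H_I^{d-1}(R)\ar[r]& H_{\fm}^d(R)\ar[r]& H_{\fa}^d(R)\oplus H_{\fb}^d(R).}
\]
The key step is that the right-hand term vanishes. Since $R$ is complete local and a domain, and $\fa,\fb$ are not $\fm$-primary, we have $\dim R/\fa>0$ and $\dim R/\fb>0$. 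The Hartshorne--Lichtenbaum vanishing theorem \cite[8.2.1]{BroSha} then gives $H_{\fa}^d(R)=0=H_{\fb}^d(R)$. Hence the connecting map $H_I^{d-1}(R)\to H_{\fm}^d(R)\cong E$ is surjective.

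It follows that $(0)=\Ann_R(E)\in\coass_R(H_I^{d-1}(R))$. Equivalently, one can invoke Proposition \ref{HLY and suitable quotients} with $Q=0$ together with Proposition \ref{the top cdim case} for the $\fm$-primary ideal $\fm$. This uses only the definition of coassociated primes, or the reformulation in Lemma \ref{reformulations of HLY}(iii), and it holds for any regular $R$, with no characteristic assumption needed. The surjection also shows that $H_I^{d-1}(R)\neq 0$.

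The main obstacle is the justification that the disconnection hypothesis really furnishes such $\fa,\fb$ and that it survives completion. On the punctured spectrum a disconnection gives closed sets $V(\fa)\setminus\{\fm\}$ and $V(\fb)\setminus\{\fm\}$ that are disjoint, nonempty and cover $V(I)\setminus\{\fm\}$. Disjointness means every prime containing $\fa+\fb$ equals $\fm$, and nonemptiness means $\fa,\fb$ are not $\fm$-primary. For completion, $\sqrt{\fa\widehat R+\fb\widehat R}=\fm\widehat R$ holds since $\fm$-primary ideals extend to $\fm\widehat{R}$-primary ones. Also $\dim\widehat R/\fa\widehat R=\dim R/\fa>0$, and $\sqrt{I\widehat{R}}=\sqrt{\fa\widehat{R}\cap\fb\widehat{R}}$ by flatness. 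This suffices, since only the radical identities were used.
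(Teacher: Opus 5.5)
Your proposal is correct and follows the paper's own proof. Both choose $\fa,\fb$ with $\sqrt{I}=\sqrt{\fa\cap\fb}$, $\sqrt{\fa+\fb}=\fm$ and neither $\fm$-primary, then read off a surjection $H^{d-1}_I(R)\to H^d_{\fm}(R)\cong E$ from the Mayer--Vietoris sequence; you also spell out the vanishing $H^d_{\fa}(R)=0=H^d_{\fb}(R)$ via Lichtenbaum--Hartshorne, which the paper leaves implicit. Your reduction to the complete case is valid but not needed: $\widehat{R}$ is a domain, so the general (non-complete) form of the Lichtenbaum--Hartshorne theorem already gives this vanishing over $R$ itself.
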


\begin{proof}
As $V(I) \setminus \{\fm\}$ is disconnected, there are two ideals $\fa,\ \fb$ of $R$ satisfying the following properties:
\[
\sqrt{I}=\sqrt{\fa \cap \fb},\ \sqrt{\fa+\fb}=\fm,\ \sqrt{\fa}\neq \fm,\ \sqrt{\fb} \neq \fm.
\]
Then the Mayer–Vietoris sequence (which in this case is also known as Rung's display) implies the epimorphism $\xymatrix@1{H^{d-1}_I(R)\ar@{->>}[r]&  H^{d}_{\fm}(R)}$, which is what we need to show.
\end{proof}
In the case that $R$ is a $d$-dimensional complete regular local ring that contains its residue field, which is separably closed, the so-called second vanishing Theorem asserts that whenever $\dim(R/I) \geq 2$, the vanishing of $H^{d-1}_I(R)$ is equivalent to the connectedness of the punctured spectrum of $R/I$ (cf. \cite{HunLyu}, \cite{Ogus1973}, \cite{PeskineSzpiro1973}).
In order to gather the related results it could be interesting to recall the following statement obtained by Boix and Eghbali in \cite[Theorem 1.1(2)]{BoixEghbali2023IJAC}.

\begin{teo}
Let $R$ be a regular local ring of dimension $d$ containing a field and $I$ be of pure dimension $2$. Then the HLY conjecture holds for $H^{d-1}_I(R)$.   
\end{teo}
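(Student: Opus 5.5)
The plan is to reduce to a situation where the earlier results of this section apply, according to the depth and connectedness of $R/I$. I would first invoke Lemma \ref{reduce HLY to the complete case} to assume $R$ complete, and then apply flat base change. Adjoining to $R$ the separable closure of its residue field is flat and local, preserves regularity and dimension, and the hypothesis of pure dimension $2$ is kept. The descent of the conclusion is the delicate point, since Question \ref{HLY and flat base change} is open in general. I would therefore try to avoid descent, and instead argue directly over $R$ wherever possible. Only where unavoidable would I use the finite flat extension Lemma \ref{HLY and finite flat extensions} on finite Galois subextensions, together with a limit argument.

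Assume $H^{d-1}_I(R)\neq 0$ and $\dim R/I=2$ with $I$ of pure dimension $2$. The proof splits according to the depth of $R/I$.

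\emph{Case $\depth(R/I)=1$.} Then Theorem \ref{HStextend} gives $(0)\in\coass_R(H^{d-1}_I(R))$ directly.

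\emph{Case $\depth(R/I)=2$.} Here $R/I$ is Cohen--Macaulay of dimension $2$. After reducing to the separably closed residue field setting, the second vanishing theorem (Hartshorne--Lichtenbaum / Huneke--Lyubeznik / Ogus / Peskine--Szpiro) gives the following: if the punctured spectrum of $R/I$ is connected, then $H^{d-1}_I(R)=0$ and there is nothing to prove. If it is disconnected, Hartshorne's connectedness result forces $\depth(R/I)\le 1$, contradicting the assumption of this case. So this case contributes nothing.

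\emph{Case $\depth(R/I)=0$.} Replace $I$ by its unmixed part $I'$, the intersection of the primary components of dimension $2$. Pure dimensionality means $I$ and $I'$ have the same radical, up to embedded components at $\fm$. Local cohomology only depends on the radical of the support ideal, so $H^{d-1}_I(R)=H^{d-1}_{I'}(R)$ once we pass to radicals. Since $\sqrt{I}$ is pure of dimension $2$ with no embedded primes, $\depth(R/\sqrt{I})\ge 1$, and we are back in one of the previous cases.

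The case analysis is therefore straightforward, and I expect the main obstacle to be the field extension step. One needs that passing to a strictly Henselian or separably closed residue field does not destroy $(0)\in\coass$ when descending back. My first approach would be to note that, in the disconnected situation, the Mayer--Vietoris argument of Proposition \ref{disconnected} already produces a surjection $H^{d-1}_I(R)\twoheadrightarrow H^d_\fm(R)=E$. This argument works without any hypothesis on the residue field, which bypasses descent entirely. The residue field is only needed to guarantee vanishing in the connected case, and vanishing descends by faithful flatness.
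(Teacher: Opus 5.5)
You handle $\depth(R/I)\in\{0,2\}$ correctly, but the whole content sits in the case $\depth(R/I)=1$, and there the step fails. As printed, Theorem \ref{HStextend} has no dimension hypothesis, but its proof only works when $\dim R/I=1$. It needs $y$ with $I+(y)$ $\fm$-primary, in order to get $H^d_\fm(R)\cong H^1_{(y)}(H^{d-1}_I(R))$, $H^{d-1}_I(R/yR)\cong H^{d-1}_\fm(R/yR)$ and $H^{d-1}_{I+(y)}(R)=0$. No such $y$ exists when $\dim R/I=2$.

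Nor can the theorem be used as a black box in dimension $2$. Take $R=\mathbb{C}[\![x,y,z,w]\!]$ and $R/I\cong\mathbb{C}[\![s^4,s^3t,st^3,t^4]\!]$. Then $R/I$ is a two-dimensional domain of depth $1$ with connected punctured spectrum, so $H^3_I(R)=0$ by the second vanishing theorem and $\coass_R(H^3_I(R))=\emptyset$.

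Your last paragraph does not rescue this case, because connectedness over $R$ does not yield vanishing unless the residue field is separably closed. Take $R=\mathbb{R}[\![x,y,z,w]\!]$ and $I=(x-iy,z-iw)\mathbb{C}[\![x,y,z,w]\!]\cap R$. Then $I$ is prime of dimension $2$, so Proposition \ref{disconnected} is unavailable. Yet $I\mathbb{C}[\![x,y,z,w]\!]=(x-iy,z-iw)\cap(x+iy,z+iw)$, and Mayer--Vietoris gives $H^3_I(R)\otimes_R\mathbb{C}[\![x,y,z,w]\!]\cong H^4_{\mathfrak{m}}(\mathbb{C}[\![x,y,z,w]\!])\neq 0$. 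So your route genuinely needs descent of $(0)\in\coass$ along a residue field extension, which you only gesture at.

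The missing idea is a support computation, which makes the case split, the second vanishing theorem and the base change all unnecessary. The paper only quotes this statement from \cite{BoixEghbali2023IJAC}, but this is the mechanism of Remark \ref{when these conditions hold}.

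Let $\fp\neq\fm$ contain $I$.
\begin{itemize}
\item If $\height\fp<d-1$, then $H^{d-1}_I(R)_\fp=0$ by Grothendieck vanishing.
\item If $\height\fp=d-1$, then $\dim R/\fp=1$, so $\fp$ is not minimal over $I$ by pure dimensionality. Hence $\dim R_\fp/IR_\fp\geq 1$. The Hartshorne--Lichtenbaum theorem over the $(d-1)$-dimensional regular local ring $R_\fp$, whose completion is a domain, then gives $H^{d-1}_{IR_\fp}(R_\fp)=0$.
\end{itemize}
Thus $\stsupp H^{d-1}_I(R)\subseteq\{\fm\}$. Since $R$ contains a field, $H^{d-1}_I(R)$ is a $\mathcal{D}$-module or an $F$-module, so Proposition \ref{zero dimensional case} gives $H^{d-1}_I(R)\cong E^{\oplus\mu}$. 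If $H^{d-1}_I(R)\neq 0$, then $\mu\geq 1$, a coordinate projection is a surjection onto $E$, and $(0)\in\coass_R(H^{d-1}_I(R))$. No hypothesis on the residue field is needed.
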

Our next purpose is to generalize the argument presented along the proof of Proposition \ref{disconnected}. The key tool is the next lemma, which may be regarded as a mild generalization of the argument used for instance in \cite[Theorem 2.1]{Eghbalilcci}. We want to single out that the argument presented here is often useful to prove vanishing statements of local cohomology modules, see for instance \cite[Proof of Lemma 3.2]{Bataviacdim}.

\begin{lm}\label{general Rung display}
Let $(R,\fm)$ be a regular local ring, let $I\subset R$ be an ideal, and set $c:=\cdim (I)$. Assume that there are ideals $J_1,\ J_2$ such that $\sqrt{I}=J_1\cap J_2$, $\max\{\cdim (J_1),\cdim (J_2)\}\leq c$, and $\cdim (J_1+J_2)=c+1$. If $(0)\in\operatorname{Coass}(H_{J_1+J_2}^{c+1}(R))$, then we have that $(0)\in\operatorname{Coass}(H_{I}^{c}(R))$.
\end{lm}

\begin{proof}
We consider the Mayer--Vietoris exact sequence attached to the decomposition $\sqrt{I}=J_1\cap J_2$ at the spot $j=\cdim (I)=c$. Then, we end up with the following exact sequence:
\[
\xymatrix{H_I^c (R)\ar[r]& H_{J_1+J_2}^{c+1}(R)\ar[r]& H_{J_1}^{c+1} (R)\oplus H_{J_2}^{c+1}(R)\ar[r]& H_I^{c+1}(R).}
\]
Since $c=\cdim (I)$, it is clear that $H_I^{c+1}(R)=0$. Moreover, since $\max\{\cdim (J_1),\cdim (J_2)\}\leq c$, we also know that $H_{J_1}^{c+1} (R)\oplus H_{J_2}^{c+1}(R)=0$. Summing up, the above piece of the Mayer--Vietoris long exact sequence gives rise to the surjection $\xymatrix@1{H_I^c (R)\ar@{->>}[r]& H_{J_1+J_2}^{c+1}(R)}$. Finally, since by assumption there is a surjection $\xymatrix@1{H_{J_1+J_2}^{c+1}(R)\ar@{->>}[r]& E}$, we can compose both surjections to obtain the surjection $\xymatrix@1{H_{I}^{c}(R)\ar@{->>}[r]& E}$, which is what we want to prove.
\end{proof}

With exactly the same proof, we can present this slightly more general statement.

\begin{lm}\label{general Mayer Vietoris trick}
Let $(R,\fm)$ be a regular local ring, let $j\geq 0$ be an integer, and let $I\subset R$ be an ideal. Assume that there are ideals $J_1,\ J_2$ such that $\sqrt{I}=J_1\cap J_2$, $H_{J_1}^{j+1}(R)=0=H_{J_2}^{j+1}(R)$, and $(0)\in\coass_R (H_{J_1+J_2}^{j+1}(R))$. Then, we have that $(0)\in\coass_R (H_I^j (R))$.
    
\end{lm}

\begin{rk}
Let $(R,\fm)$ be a regular local ring of dimension $d$, let $I\subset R$ be an ideal. Assume that there are ideals $J_1,\ J_2$ such that $\sqrt{I}=J_1\cap J_2$ and $\cdim (J_1)=\cdim (J_2)=c$. We consider the following cases.

\begin{enumerate}[(i)]

\item  If $\sqrt{J_1+J_2}=\fm$ and $c=d-1$, then Lemma \ref{general Rung display} implies that $(0)\in\coass_R (H_I^{d-1}(R))$. 

\item If $\sqrt{J_1+J_2}=J$, where $J$ is a $1$-dimensional ideal and $0$ is not in $\Ass R/J$, by virtue of Theorem \ref{HStextend}, $(0) \in \coass_R(H^{d-2}_I(R))$. 
\end{enumerate}

\end{rk}
In order to find another potential positive answer to Question \ref{Sec2question} we consider the best possible upper bound on the cohomological dimension of an ideal, obtained by Lyubeznik in \cite[Theorem 1.1]{Lyu2006} (see also \cite[Theorem 3.8]{DaoTakagicdim}, and \cite{Bataviacdim} and the references given therein for more bounds in this spirit). Before reviewing it, we recall the following construction.

\begin{df}
Let $(R,\fm)$ be a local ring, let $I\subset R$ be an ideal, and let $\sqrt{I}=\fp_1\cap\ldots\cap\fp_n$ be the prime decomposition of $\sqrt{I}$. Set $\Delta$ as the simplicial complex constructed as follows: a simplex $\{i_0,\ldots, i_s\}$ belongs to $\Delta$ if and only if $\sqrt{\fp_{i_0}+\ldots+\fp_{i_s}}\neq\fm$.
\end{df}
With this construction in mind, we are ready for reviewing Lyubeznik's upper bound on the cohomological dimension of an ideal.

\begin{teo}[Lyubeznik]\label{Lyubeznik upper bound}
Let $R=\mathbb{K}[\![x_1,\ldots,x_d]\!]$, where $\mathbb{K}$ is a separably closed field, let $I\subset R$ be an ideal, and let $\sqrt{I}=\fp_1\cap\ldots\cap\fp_n$ be the prime decomposition of $\sqrt{I}$. Assume that the height of all the $\fp_i$'s is at most $c$. Set
\[
t:=\left\lceil\frac{d-2}{c}\right\rceil,\ v:=d-t-1.
\]
Then, we have an isomorphism of $R$--modules $H_I^{v+1} (R)\cong E^{\oplus w}$, where $w:=\dim_{\mathbb{K}}\widetilde{H}_{t-1}(\Delta;\mathbb{K})$ and $\widetilde{H}$ denote reduced simplicial homology.
\end{teo}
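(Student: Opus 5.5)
The plan is to use the Mayer--Vietoris spectral sequence attached to the prime decomposition $\sqrt{I}=\fp_1\cap\ldots\cap\fp_n$. Since local cohomology only depends on the radical, we may replace $I$ by $\fp_1\cap\ldots\cap\fp_n$. There is a spectral sequence
\[
E_1^{-p,q}=\bigoplus_{i_0<\ldots<i_p} H^q_{\fp_{i_0}+\ldots+\fp_{i_p}}(R)\Longrightarrow H^{q-p}_I(R).
\]
It is the iterated version of the Mayer--Vietoris sequence used in Lemma \ref{general Rung display} and Proposition \ref{disconnected}. We are interested in total degree $q-p=v+1=d-t$.

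Each sum $\fp_{i_0}+\ldots+\fp_{i_p}$ is of one of two kinds.

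\emph{$\fm$--primary sums.} If the sum is $\fm$--primary, i.e.\ the simplex $\{i_0,\ldots,i_p\}$ is \emph{not} in $\Delta$, then $H^q$ of it is $H^q_\fm(R)$. This vanishes for $q\neq d$ and equals $E$ for $q=d$. So these sums only contribute in the row $q=d$. That row is the complex
\[
\bigoplus_{\sigma\notin\Delta,\,|\sigma|=p+1}E .
\]
This is $E\otimes_{\mathbb{K}}$ of the relative simplicial cochain complex of the full simplex on $n$ vertices modulo $\Delta$. Its cohomology in the spot $p=t$ is therefore $E\otimes_{\mathbb{K}} \widetilde{H}^{t}(\text{simplex},\Delta;\mathbb{K})\cong E\otimes_{\mathbb{K}}\widetilde{H}^{t-1}(\Delta;\mathbb{K})$, since the full simplex is acyclic. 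By universal coefficients over a field, this is $E^{\oplus w}$. The total degree of the spot $(p,q)=(t,d)$ is $d-t=v+1$, as required.

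\emph{Non--$\fm$--primary sums.} I would show that these contribute nothing in total degrees $\geq v+1$, and also do not interfere with the differentials entering or leaving the row $q=d$ at $p=t$. Concretely, the vanishing I need is
\[
H^q_{\fp_{i_0}+\ldots+\fp_{i_p}}(R)=0\quad\text{for } q-p\geq v+1=d-t,\ \text{whenever the sum is not } \fm\text{--primary.}
\]
Two tools are available. For $p$ small, the sum has height at most $(p+1)c$, and I would use a Faltings/Huneke--Lyubeznik type bound on cohomological dimension in terms of height. For every non-$\fm$-primary ideal of the complete ring $R$, Hartshorne--Lichtenbaum gives $H^d=0$, and I would combine this with the bound $\cdim\leq d-1$. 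The choice $t=\lceil (d-2)/c\rceil$ is exactly what should make the numerics work: one needs $(p+1)c<d-1$ in the relevant range $p\leq t-1$, and the Hartshorne--Lichtenbaum bound for $p\geq t$. Separable closedness of $\mathbb{K}$ enters here, through the second vanishing theorem $H^{d-1}_J(R)=0$ for suitable $J$ (punctured spectrum connected). This is what controls the row $q=d-1$.

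Once this vanishing holds, the spot $E_2^{-t,d}\cong E^{\oplus w}$ is the only nonzero term on the antidiagonal of total degree $v+1$. All differentials into and out of it land in zero terms, so it survives to $E_\infty$, and the abutment gives $H_I^{v+1}(R)\cong E^{\oplus w}$.

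I expect the main obstacle to be the vanishing statement for the non-$\fm$-primary sums. It needs a sharp cohomological-dimension bound for sums of primes of height at most $c$ in a regular complete ring, in the window $p<t$. There I would first try the subadditivity $\cdim(\fa+\fb)\leq\cdim(\fa)+\cdim(\fb)$ together with Faltings' bound $\cdim(\fp)\leq d-\lfloor (d-1)/c\rfloor$. If that is too weak, I would fall back on Lyubeznik's original argument via the second vanishing theorem.
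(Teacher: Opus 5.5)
The paper gives no proof of this statement; it only quotes Lyubeznik. Your proposal has a genuine gap at exactly the step you flagged, and for the statement as transcribed that gap cannot be closed.

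Everything rests on the vanishing of $H^q_{\fp_\sigma}(R)$ for $q-p\ge d-t$ and for non-$\fm$-primary sums $\fp_\sigma=\fp_{i_0}+\cdots+\fp_{i_p}$. Your own numerical requirement $(p+1)c<d-1$ at $p=t-1$ means $tc\le d-2$. But $t=\lceil (d-2)/c\rceil$ forces $tc\ge d-2$, so the requirement holds only when $c\mid d-2$. When $c\nmid d-2$, the vanishing, and the statement itself, can fail.

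Take $R=\mathbb{K}[\![x_1,x_2,x_3]\!]$ and $I=(x_1,x_2)$. Then $c=2$, $t=1$ and $v+1=2$. Here $\Delta$ is a single vertex, so $w=\dim_{\mathbb{K}}\widetilde H_0(\Delta;\mathbb{K})=0$. However, $H^2_I(R)\neq 0$ and $(x_1,x_2)\in\operatorname{Supp}H^2_I(R)$. So $H^2_I(R)$ is not $\fm$-torsion and cannot be a direct sum of copies of $E$. In your spectral sequence the culprit is $E_1^{0,2}=H^2_{\fp_1}(R)$, sitting in row $q=d-1$ at $p=t-1$. Since $\dim R/\fp_1=1$, the second vanishing theorem is of no help there.

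The same happens with a genuine sum. For $I=(x_1,x_2)\cap(x_3,x_4)$ in five variables, Mayer--Vietoris gives $H^3_I(R)\cong H^4_{(x_1,\ldots,x_4)}(R)$, which is again not $\fm$-torsion, while $w=0$. Presumably the intended value is $t=\lfloor (d-2)/c\rfloor$, which agrees with the ceiling exactly when $c\mid d-2$.

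Even with $t=\lfloor (d-2)/c\rfloor$, the termwise vanishing does not follow from the tools you list. Terms of total degree $d-t$ occur in every row $q=d-t+p$ with $0\le p\le t$, not only in rows $q=d-1,d$. Already for $p=0$ you need $H^{d-t}_{\fp_i}(R)=0$ for a single non-maximal prime of height at most $c$. Faltings' bound only gives $\cdim(\fp_i)\le d-\lfloor (d-1)/c\rfloor$, and this equals $d-t$ whenever $c\nmid d-1$ (for instance $d=6$, $c=2$). Subadditivity of cohomological dimension is weaker still. So this input is itself the one-vertex case of the theorem.

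Likewise, the second vanishing theorem kills $H^{d-1}_{\fp_\sigma}(R)$ only when $R/\fp_\sigma$ has dimension at least $2$ and connected punctured spectrum. You never verify that connectedness for sums of primes.

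What does go through is the support statement. For $\mathfrak{q}\neq\fm$, Faltings' bound in $R_{\mathfrak q}$ gives $\cdim(IR_{\mathfrak q})\le d-1-\lfloor (d-2)/c\rfloor=d-1-t$. Hence $H^{d-t}_I(R)$ is $\fm$-torsion, and $H^{d-t}_I(R)\cong E^{\oplus w'}$ by Proposition~\ref{zero dimensional case}. The real content of the theorem is then the equality $w'=\dim_{\mathbb{K}}\widetilde H_{t-1}(\Delta;\mathbb{K})$. Your argument offers no mechanism for that equality beyond the termwise vanishing, which is not available.
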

As immediate consequence of Lyubeznik's upper bound, we obtain the following:

\begin{rk}
Let $R=\mathbb{K}[\![x_1,\ldots,x_d]\!]$, where $\mathbb{K}$ is a separably closed field, let $I\subset R$ be an ideal, and let $\sqrt{I}=\fp_1\cap\ldots\cap\fp_n$ be the prime decomposition of $\sqrt{I}$, where $\depth R/I \geq 2$.

Fix $1\leq i\leq n$. By \cite[2.1.4]{BrunsHerzog1993}, we know that $\height(\fp_i)+\dim (R/\fp_i)=d$. We claim that $\height \fp_i \leq d-2$, which keeping in mind this equality is equivalent to say that $\dim R/\fp_i \geq 2$. Suppose that $\dim R/\fp_i=0$. This implies that $\fm \in \Ass (R/I)$, which is impossible. As $\depth R/I \geq 2$, by Hartshorne's connectedness result \cite[Proposition 2.1]{Hartshorne1962}, the punctured spectrum of $R/I$ is disconnected. In this way, if $\dim R/\fp_j=1$ for some $j$, then $\fp_j + \bigcap_{j \neq i}\fp_i$ is $\fm$-primary, and therefore it yields that $\spec R/I \setminus \{\fm\}$ is disconnected, a contradiction. Summing up, as the punctured spectrum of $R/I$ is connected, $\Delta$ is connected. This implies that $\widetilde{H}_{0}(\Delta;\mathbb{K})=0$, hence we can guarantee that $\cdim (I) \leq d-2$.

Now, suppose that $\height \fp_i \leq \frac{d-2}{2}$ for all $1 \leq i \leq n$. Then, using Theorem \ref{Lyubeznik upper bound} we can ensure that  $(0)\in\operatorname{Coass}(H_I^{d-2} (R))$ if and only if $\dim_{\mathbb{K}}\widetilde{H}_{1}(\Delta;\mathbb{K})\geq 1$.
\end{rk}

We end up this section by recalling a couple of very important results obtained by Lyubeznik in \cite[Theorem 2.4 (a)]{Lyubeznik1993Dmod} and \cite[Theorem 1.4]{Lyu1997}, because there will be very useful for us in subsequent sections.

\begin{prop}\label{zero dimensional case}
Let $(R,\fm, \mathbb{K})$ be a regular local ring containing a field, which we denote by $k$. Let $M$ be an $R$--module such that $\stsupp (M)\subseteq\{\fm\}.$ Moreover, assume that one of the following hypothesis is satisfied

\begin{enumerate}[(i)]

\item  $k$ has characteristic zero, and $M$ is a left $\mathcal{D}$--module.

\item $k$ has prime characteristic and $M$ is an $F$--module.

\end{enumerate}
Then, we have that $M$ is isomorphic, as $R$--module, to a finite direct sum of copies of the injective hull of $\mathbb{K}$.
\end{prop}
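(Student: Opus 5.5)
The plan is to treat $M$ first as an injective $R$--module and then to count its indecomposable summands. This is Lyubeznik's argument, adapted to each of the two settings.

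\textbf{Step 1: $M$ is injective.} Since $\stsupp(M)\subseteq\{\fm\}$, we have $M=\Gamma_{\fm}(M)$, so every element of $M$ is killed by a power of $\fm$. I will show that the Bass numbers $\mu_i(\fp,M)$ vanish for all $i\geq 1$ and all primes $\fp$; then the minimal injective resolution of $M$ has length zero, i.e.\ $M$ is injective. For $\fp\neq\fm$ this is immediate, because $M_\fp=0$. It therefore remains to show $\Ext^i_R(\mathbb{K},M)=0$ for $i\geq1$. This is the core of Lyubeznik's result. In characteristic zero (hypothesis (i)) I would first pass to the completion, reducing to $R=\mathbb{K}[\![x_1,\ldots,x_d]\!]$; this is harmless because $M$ is $\fm$--torsion and so is unchanged by completion. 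I would then argue by induction on $d$ using the $\mathcal{D}$--module structure: $M'=(0:_M x_d)$ is a $\mathcal{D}$--module over $\mathbb{K}[\![x_1,\ldots,x_{d-1}]\!]$, and $\partial_{x_d}$ makes $M\cong M'\otimes_{\mathbb{K}}\mathbb{K}[\partial_{x_d}]$ (Kashiwara's equivalence). This gives a decomposition $M\cong H^1_{(x_d)}(M')$--type tensor structure, from which $x_d$ acts surjectively on $M$. Together with the induction hypothesis, this kills all the $\Ext$'s against $\mathbb{K}$ via the Koszul complex on $x_1,\ldots,x_d$. In prime characteristic (hypothesis (ii)) I would instead use the Frobenius structure isomorphism $M\cong F^*M$. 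Since the Frobenius functor $F^*$ is exact on a regular ring (Kunz), $\Ext^i(\mathbb{K},M)$ is a direct limit along $\Ext^i(R/\fm^{[p^e]},M)$, and this limit computes the local cohomology $H^i_\fm(M)$, which vanishes for $i\geq1$ since $M$ is $\fm$--torsion. Making that step precise is the technical part of the prime characteristic case.

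\textbf{Step 2: counting summands.} An injective $\fm$--torsion module is a direct sum of copies of $E$, with the number of copies equal to $\dim_{\mathbb{K}}\Soc(M)=\mu_0(\fm,M)$. In our applications $M$ is a local cohomology module, or more generally $M$ is holonomic in case (i) or $F$--finite in case (ii). For such $M$ the socle is finite dimensional: in characteristic zero this is Lyubeznik's finiteness of Bass numbers for $\mathcal{D}$--modules of this type, and in prime characteristic it follows from the generating morphism of an $F$--finite module.

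\textbf{Main obstacle.} I expect the main obstacle to be the vanishing in Step 1, in particular the Kashiwara-type inductive structure in the $\mathcal{D}$--module case. My first attempt would be to cite \cite[Theorem 2.4 (a)]{Lyubeznik1993Dmod} for case (i) and \cite[Theorem 1.4]{Lyu1997} for case (ii), rather than reprove them.
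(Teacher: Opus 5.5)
Your plan to cite \cite[Theorem 2.4 (a)]{Lyubeznik1993Dmod} for (i) and \cite[Theorem 1.4]{Lyu1997} for (ii) is exactly the paper's proof. The paper gives no argument beyond these two citations. Two points still need tightening.

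\textbf{Finiteness needs an extra hypothesis.} Your Step 2 is not mere bookkeeping. For an arbitrary $\mathcal{D}$-module or $F$-module supported at $\fm$, you only get a direct sum of copies of $E$, possibly infinite. Indeed, $E^{\oplus\mathbb{N}}$ is both a $\mathcal{D}$-module and an $F$-module with support $\{\fm\}$, and it is not a finite sum. So ``finite'' genuinely requires $\dim_{\mathbb{K}}\Hom_R(\mathbb{K},M)<\infty$. That comes from holonomicity (resp.\ $F$-finiteness) together with Lyubeznik's finiteness of Bass numbers. You were right to bring this in, but state it as an added hypothesis rather than as ``in our applications''. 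It costs nothing in the paper, which only applies the proposition to local cohomology modules.

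\textbf{Your sketch of Step 1.} If you wanted to replace the citations by your own argument, two steps fail as written.

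In case (i), passing to $\widehat{R}$ preserves the $R$-module $M$ and the groups $\Ext^i_R(\mathbb{K},M)$, but not the $\mathcal{D}$-structure. A module over the $k$-linear differential operators of $R$ need not carry an action of the derivations $\partial/\partial x_i$ of $\widehat{R}$, and your Kashiwara induction uses exactly those. In fact, there are regular local rings containing $\mathbb{Q}$ with no non-zero derivations; then every $R$-module, e.g.\ $\mathbb{K}$, is a $\mathcal{D}$-module supported at $\fm$. Over $\mathbb{K}[\![x_1,\ldots,x_d]\!]$, or whenever the derivations extend, your induction is fine. Surjectivity of $x_d$ with kernel $M'$ gives $\Ext^i_R(\mathbb{K},M)\cong\Ext^i_{R'}(\mathbb{K},M')$, and you induct on $d$.

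In case (ii), $\Ext^i_R(\mathbb{K},M)$ is not the direct limit of the $\Ext^i_R(R/\fm^{[p^e]},M)$. It only maps to that limit, $H^i_{\fm}(M)=0$, and what must be proved is that this map is injective. That injectivity is precisely where the $F$-structure enters. For example, $M=\mathbb{K}$ is $\fm$-torsion but not an $F$-module, and for it the map is zero while $\Ext^i_R(\mathbb{K},\mathbb{K})\neq 0$.

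Deferring to Lyubeznik's theorems, as you propose, sidesteps both issues.
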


\section{The characteristic zero case} \label{characteristic zero}
The goal of this section is to explore the HLY conjecture when $(R,\fm,\mathbb{K})$ is a regular local ring containing a field $k$ of characteristic zero. Unless otherwise is specified, in what follows $\mathcal{D}$ will denote the ring of $k$--linear differential operators on $R$. 

Notice that, thanks to Lemma \ref{reformulations of HLY}, HLY conjecture is connected with the following particular case of a result by Hartshorne and Polini \cite[Theorem 5.1]{HartshornePolinisimple}. In what follows, by $H_{dR}^*$ we mean algebraic de Rham cohomology as worked out by Hartshorne in \cite{Har1975}.

\begin{teo}\label{Hartshorne Polini theorem}
Assume that $R=\mathbb{K}[\![x_1,\ldots,x_n]\!]$, where $\mathbb{K}$ is a field of characteristic zero, let $I\subset R$ be an ideal, let $j\geq 1$, and set $m:=\dim_{\mathbb{K}}H_{dR}^n (H_I^j (R))$. Then, there is a surjective homomorphism of $\mathcal{D}$--modules $\xymatrix@1{H_I^j (R)\ar@{->>}[r]& E^m}$.
\end{teo}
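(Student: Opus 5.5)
The plan is to realize the surjection as a $\mathcal{D}$--linear map obtained by duality from the top de Rham cohomology. Recall that $H_{dR}^n(M)=M/(\partial_1M+\cdots+\partial_nM)$, where $\partial_i=\partial/\partial x_i$. Put $M:=H_I^j(R)$. By Lyubeznik's finiteness results, $M$ is a holonomic $\mathcal{D}$--module, so $m=\dim_{\mathbb{K}}H_{dR}^n(M)$ is finite.

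\emph{Step 1: the duality isomorphism.} I would first establish a natural isomorphism
\[
\Hom_{\mathcal{D}}(M,E)\cong\Hom_{\mathbb{K}}\bigl(H_{dR}^n(M),\mathbb{K}\bigr),
\]
valid at least for holonomic $M$. Concretely, identify $E=H_{\fm}^n(R)$ with the continuous $\mathbb{K}$--dual of $R$, where $\partial_i$ acts by minus the transpose; this is the $\mathcal{D}$--module pushed forward from the closed point. A $\mathcal{D}$--map $\varphi\colon M\to E$ gives the functional $\lambda(u)=\varphi(u)(1)$. Since $(\partial_i\varphi(u))(1)=\pm\varphi(u)(\partial_i 1)=0$, this functional kills $\sum_i\partial_iM$. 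Conversely, a functional $\lambda$ vanishing on $\sum_i\partial_iM$ should produce $\varphi(u)(f):=\lambda(fu)$.

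The delicate point, which I expect to be the main obstacle, is checking that $\varphi(u)$ actually lies in $E$, i.e.\ that $\lambda(\fm^k u)=0$ for $k\gg0$. This is where holonomicity must enter. My first attempt would phrase Step 1 as Kashiwara-type adjunction for the closed embedding of the point. Then I would use that on holonomic modules the relevant adjoint of $i_+$ is computed by top de Rham cohomology, which is finite dimensional. Failing that, I would filter $M$ by its finite-length holonomic composition series and check the claim on simple holonomic modules.

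\emph{Step 2: building the map.} Choose a $\mathbb{K}$--basis $\lambda_1,\dots,\lambda_m$ of $\Hom_{\mathbb{K}}(H_{dR}^n(M),\mathbb{K})$. Step 1 turns these into a $\mathcal{D}$--linear map
\[
\Phi=(\varphi_1,\dots,\varphi_m)\colon M\longrightarrow E^m .
\]

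\emph{Step 3: surjectivity.} The module $E$ is a simple $\mathcal{D}$--module; it is the $\delta$--module $\mathcal{D}/\mathcal{D}(x_1,\dots,x_n)$. Hence every $\mathcal{D}$--submodule of $E^m$ is isomorphic to $E^k$ for some $k\le m$; this is semisimplicity together with $\End_{\mathcal{D}}(E)=\mathbb{K}$.

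Since $H_{dR}^n$ is right exact and $H_{dR}^n(E)=\mathbb{K}$, the image $N:=\Phi(M)$ satisfies $\dim_{\mathbb{K}}H_{dR}^n(N)\le m$. On the other hand, by construction the composite $H_{dR}^n(M)\to H_{dR}^n(N)\to H_{dR}^n(E^m)=\mathbb{K}^m$ is the isomorphism $(\lambda_1,\dots,\lambda_m)$. Therefore $H_{dR}^n(N)\to H_{dR}^n(E^m)$ is surjective, which forces $N\cong E^k$ with $k\ge m$, so $k=m$. Finally, since $N\cong E^m$ and the composite $H_{dR}^n(N)\to H_{dR}^n(E^m)$ is surjective, the inclusion $N\subseteq E^m$ is an equality, because a proper submodule would be a proper summand.

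This gives the surjection $H_I^j(R)\twoheadrightarrow E^m$ of $\mathcal{D}$--modules. Through Lemma \ref{reformulations of HLY}, it shows that HLY holds at $j$ exactly when $m\geq 1$.
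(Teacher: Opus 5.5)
The paper gives no proof of this statement; it quotes it from Hartshorne--Polini. Judged on its own, your Steps 2 and 3 are correct. So is the easy half of Step 1: the map $\varphi\mapsto\lambda$ is well defined, and it is injective because $\lambda=0$ gives $\varphi(u)(f)=\lambda(fu)=0$.

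\textbf{The gap.} The other half of Step 1, which you flag but do not prove, carries the whole content of the theorem. Without it you only get a surjection $M\to E^{m'}$ with $m'=\dim_{\mathbb{K}}\Hom_{\mathcal{D}}(M,E)\leq m$. Since $H^n_{dR}(M)$ is finite dimensional, what you need is exactly this: for every $u\in M$, $\fm^k u\subseteq\sum_i\partial_iM$ for $k\gg0$. This fails without a finiteness hypothesis: for $M=\mathcal{D}$ and $u=1$ it is false, since $H^n_{dR}(\mathcal{D})\cong R$.

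Neither fallback supplies it. Saying that the left adjoint of $i_+$ on holonomic modules over $\mathbb{K}[\![x_1,\ldots,x_n]\!]$ is top de Rham cohomology restates a local duality equivalent to your claim; it does not prove it. The d\'evissage also does not propagate along $0\to A\to X\to C\to 0$. To extend a map $A\to E$ realizing $\lambda|_A$, you must kill an obstruction in $\Ext^1_{\mathcal{D}}(C,E)$. That needs a comparison $\Ext^1_{\mathcal{D}}(C,E)\to H^{n-1}_{dR}(C)^{\ast}$ to be injective, which is the degree-one version of the same duality. And simple holonomic modules in $n\geq 2$ variables have no classification that would make the base case easy.

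\textbf{A direct fix.} Fix $u\in M$, write $J=\operatorname{Ann}_{\mathcal{D}}(u)=\mathcal{D}P_1+\cdots+\mathcal{D}P_s$, and let $P\mapsto P^{\ast}$ be the formal adjoint ($f^{\ast}=f$, $\partial_i^{\ast}=-\partial_i$).
\begin{enumerate}
\item The map $P\mapsto P^{\ast}(1)$ induces $\mathcal{D}/\sum_i\partial_i\mathcal{D}\cong R$. Hence $f\mapsto[fu]$ factors through $R/W\cong\mathcal{D}/(\sum_i\partial_i\mathcal{D}+J)=H^n_{dR}(\mathcal{D}u)$, where $W=\sum_l P_l^{\ast}(R)$.
\item $W$ is the image of $R^s$ under an $\fm$-adically continuous map. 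Since $R^s=\varprojlim (R/\fm^k)^s$ is linearly compact, $W$ is closed.
\item $W$ has finite codimension, because $\mathcal{D}u\subseteq M$ is holonomic (the same de Rham finiteness you already use for $M$).
\item A closed subspace of finite codimension contains some $\fm^k$. The images of $\fm^k$ in $R/W$ stabilize, and the stable image is zero because $\bigcap_k(W+\fm^k)=W$.
\end{enumerate}
Thus $\fm^ku\subseteq\sum_i\partial_i(\mathcal{D}u)\subseteq\sum_i\partial_iM$, which is exactly the continuity you need. With this inserted, your argument goes through.
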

As immediate consequence of Theorem \ref{Hartshorne Polini theorem} one obtains another equivalent formulation of the HLY conjecture over any complete regular local ring containing a field of characteristic zero.

\begin{cor}\label{derhamdim}
Assume that $R=\mathbb{K}[\![x_1,\ldots,x_n]\!]$, where $\mathbb{K}$ is a field of characteristic zero, let $I\subset R$ be an ideal, let $j\geq 0$, and assume that $H_I^j (R)\neq 0$. Then, HLY conjecture holds at $j$ if and only if $\dim_{\mathbb{K}}H_{dR}^n (H_I^j (R))\geq 1$.
\end{cor}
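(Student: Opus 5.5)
The plan is to prove both directions of the equivalence by passing through reformulation (iii) of Lemma \ref{reformulations of HLY}. By that lemma, HLY holds at $j$ if and only if $(0)\in\coass_R(H_I^j(R))$, that is, if and only if there is an $R$--linear surjection $H_I^j(R)\twoheadrightarrow E$. It therefore suffices to show that such a surjection exists exactly when $m:=\dim_{\mathbb{K}}H_{dR}^n(H_I^j(R))\geq 1$.

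\emph{Case $j=0$.} Since $R$ is a domain and $H_I^0(R)\neq 0$, we must have $I=0$. Then $H_I^0(R)=R$ is finitely generated, so HLY fails at $0$ by Discussion \ref{HLY not in the finitely generated case}. On the other side, $H_{dR}^n(R)=0$ for the formal power series ring in characteristic zero; this is the formal Poincar\'e lemma, since every series is $\partial_n$ of another. Hence both sides of the equivalence are false, and the corollary holds at $j=0$. This explains why Theorem \ref{Hartshorne Polini theorem} is stated only for $j\geq 1$.

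\emph{Case $j\geq 1$, sufficiency.} If $m\geq 1$, Theorem \ref{Hartshorne Polini theorem} gives a $\mathcal{D}$--linear, hence $R$--linear, surjection $H_I^j(R)\twoheadrightarrow E^m$. Composing it with the projection onto one factor gives a surjection onto $E$, so $(0)\in\coass_R(H_I^j(R))$.

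\emph{Case $j\geq 1$, necessity.} This is the main obstacle: an arbitrary $R$--linear surjection onto $E$ need not respect the $\mathcal{D}$--structure, so we cannot read off de Rham cohomology from it directly. I would first try to use the content of the Hartshorne--Polini result itself. In their setting, $m$ is not just some integer; the surjection onto $E^m$ is the maximal quotient of $H_I^j(R)$ that is supported at $\fm$ as a holonomic $\mathcal{D}$--module. Equivalently, $H_{dR}^n(M)^{*}\cong\Hom_{\mathcal{D}}(M,E)$ for holonomic $M$, by de Rham duality with $E$ the $\mathcal{D}$--module $H_{\fm}^n(R)$. If $m=0$, this says there is no nonzero $\mathcal{D}$--map $H_I^j(R)\to E$. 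One then wants to upgrade this to the absence of $R$--linear surjections onto $E$, which is exactly the statement that $(0)\notin\Ass_R(H_I^j(R)^{\vee})$. To this end I would try to show that any $R$--surjection onto $E$ forces the top de Rham cohomology to be nonzero, for instance through the identification $H_{dR}^n(M)=M/\sum_i\partial_i M$ and a Matlis duality comparison.

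If this approach fails, I would fall back on the paper's wording, ``immediate consequence'', and read the corollary as resting on the Hartshorne--Polini characterization for this direction. I flag this implication as the delicate point of the proof.
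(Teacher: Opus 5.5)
Your $j=0$ case and the ``if'' direction are correct. The ``if'' direction is all that the paper's one-line justification (``immediate consequence of Theorem~\ref{Hartshorne Polini theorem}'') actually gives. The genuine gap is the ``only if'' direction, and you were right to distrust it: it cannot be filled, because it is false. Theorem~\ref{Hartshorne Polini theorem}, like the identification $H^n_{dR}(M)^{*}\cong\Hom_{\mathcal{D}}(M,E)$ you mention, only detects $\mathcal{D}$-linear maps to $E$. HLY instead asks for an $R$-linear surjection, and such a surjection can exist even when $\Hom_{\mathcal{D}}(M,E)=0$. So neither your proposed Matlis-duality comparison nor your fallback of reading the converse off Hartshorne--Polini can succeed.

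Here is a counterexample. Let $R=\mathbb{K}[\![x,y]\!]$ with $\mathbb{K}$ of characteristic zero, $I=(x)$, $j=1$, and $M=H^1_{(x)}(R)=R_x/R=\bigoplus_{a\geq1}x^{-a}\mathbb{K}[\![y]\!]$.

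\emph{The de Rham side vanishes.} We have $\partial_x(x^{-(a-1)}g)=-(a-1)x^{-a}g$ for $a\geq 2$, and $\partial_y$ maps $x^{-1}\mathbb{K}[\![y]\!]$ onto itself by formal integration. Hence $\partial_xM+\partial_yM=M$, i.e.\ $H^2_{dR}(M)=0$.

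\emph{HLY nevertheless holds at $j=1$.} Identify $E=\bigoplus_{a,b\geq1}\mathbb{K}\,x^{-a}y^{-b}$. An $R$-linear map $\phi\colon M\to E$ amounts to elements $e_i=\phi(x^{-i})$ with $xe_1=0$ and $xe_{i+1}=e_i$, and
\[
e_i=\sum_{k=1}^{i}x^{-(i-k+1)}y^{-(k^2+1)}
\]
is such a family. Given $0\neq f=\sum f_{ab}x^ay^b$, let $a_0$ be the least $x$-exponent occurring in $f$ and choose $b_0$ with $f_{a_0b_0}\neq0$. Take $k_0$ with $2k_0-1>b_0$ and put $i=k_0+a_0$. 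Then the coefficient of $x^{-1}y^{-(k_0^2-b_0+1)}$ in $fe_i$ is exactly $f_{a_0b_0}\neq 0$. Indeed, the terms with $k>k_0$ vanish by minimality of $a_0$, and the terms with $k<k_0$ vanish because $k_0^2-(k_0-1)^2>b_0$. Hence the image $L=\bigcup_i Re_i$ of $\phi$ has $\operatorname{ann}_R(L)=0$. Since $\Hom_R(E/L,E)\cong\operatorname{ann}_R(L)$ and $E$ is a cogenerator, $L=E$.

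Thus $(0)\in\coass_R(H^1_{(x)}(R))$ although $\dim_{\mathbb{K}}H^2_{dR}(H^1_{(x)}(R))=0$. This agrees with Hellus' item (i) in the introduction and with Theorems~\ref{HStextend} and~\ref{HLY for squarefree monomial ideals} of the paper.

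So the statement is only valid as the implication ``$\dim_{\mathbb{K}}H^n_{dR}(H^j_I(R))\geq1$ implies HLY holds at $j$''. That is what your argument, and the paper's, establishes. The converse, which is also what Theorem~\ref{1stfailure} relies on, is not available.
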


Actually, in characteristic zero, the following reduction is possible. Before presenting it, we review the following notion introduced in \cite{SarriaCallejasCaro2017}.

\begin{df}\label{diff add algebras}
Let $k$ be a field of characteristic zero, and let $(R,\fm,\mathbb{K})$ be a commutative Noetherian regular local ring of Krull dimension $d$ containing $k$. We say
that $R$ is \textit{differentiable admissible} if the following two assertions hold.

\begin{enumerate}[(i)]

\item $\mathbb{K}$ is an algebraic extension of $k$. In the terminology coined by Matsumura \cite{Matsumuraqcf}, $k$ is a \textit{quasi--coefficient field} for $R$.

\item $\operatorname{Der}_k (R)$ is a finitely generated, free $R$--module of rank $d$ and the canonical map
\[
\xymatrix{R_{\fm}\otimes_R \operatorname{Der}_k (R)\ar[r]& \operatorname{Der}_k (R_{\fm})}
\]
is an isomorphism.

\end{enumerate}
\end{df}

In fact, one of the goals of this paper is to study Question \ref{question about coass and holonomicity: intro}, that asks whether HLY for any holonomic $\mathcal{D}$--module. Our next aim now is to give some equivalent ways to formulate Question \ref{question about coass and holonomicity: intro} that are specific for the characteristic zero case. 

\begin{lm}\label{reduction for diff. adm. alg.}
Let $k$ be a field of characteristic zero, and let $(R,\fm,\mathbb{K})$ be a local differentiable admissible $k$--algebra of dimension $d$. Let $I\subset R$ be an ideal and let $j\geq 0$ be an integer. Then, the following are equivalent.

\begin{enumerate}[(a)]

\item If $H_I^j (R)\neq 0$, then there is an $R$--module epimorphism $\xymatrix@1{H_I^j (R)\ar@{->>}[r]& H_{\fm}^d (R)}$.

\item If $H_I^j (R)\neq 0$, then there is a $\mathcal{D}$--module epimorphism $\xymatrix@1{H_I^j (R)\ar@{->>}[r]& H_{\fm}^d (R)}$.

\item If $H_I^j (R)\neq 0$, then there is a $\widehat{\mathcal{D}}$--module epimorphism $\xymatrix@1{H_{I\widehat{R}}^j (\widehat{R})\ar@{->>}[r]& H_{\fm\widehat{R}}^d (\widehat{R})}$, where $\widehat{\mathcal{D}}$ denotes the ring of $\mathbb{K}$--linear differential operators on $\widehat{R}\cong\mathbb{K}[\![x_1,\ldots,x_d]\!]$.

\end{enumerate}
    
\end{lm}

\begin{proof}
Under our assumptions, both $H_I^j (R)$ and $H_{\fm}^d (R)$ have a natural structure of holonomic $\mathcal{D}$--modules. This shows that (a) and (b) are equivalent. On the other hand, (b) implies (c) by combining \cite[Theorem 2.2 and Proposition 2.5]{SarriaCallejasCaro2017}. Finally, part (c) implies (a) by using Lemma \ref{reduce HLY to the complete case}. The proof is therefore completed.
\end{proof}
The following question asks, loosely speaking, whether it is possible to prove HLY using some kind of resolution of singularities. More precisely:

\begin{quo} \label{quoembedd}
Let $(R,\fm)$ be a regular local ring, let $I\subset R$ be an ideal such that $H_I^j (R)\neq 0$ and $(0)\in\coass_R (H_I^j (R))$ for some integer $j\geq 0$. Is there any injective ring homomorphism $\xymatrix@1{R\ar[r]& S}$, where $S$ is a regular local ring and an ideal $J\subset S$ such that $IS\subset J$ and $(0)\in\coass_S (H_J^j (S))$?
\end{quo}
In order to give a partial positive answer to this question, we plan to use the following non--trivial fact proved by Switala \cite[Lemma 2.18]{Switala2017}.

\begin{lm}\label{Switala ascend}
Let $\mathbb{K}$ be a field of characteristic zero, let $R=\mathbb{K}[\![x_1, \ldots, x_n]\!]$ and let $I$ be an ideal of $R$. Set $R':=R[\![z]\!]$ and $I':=IR'+(z)$. Then, for all $q$ and $j$ there is a $\mathbb{K}$--vector space isomorphism
\[
H_{dR}^q (H_I^j (R))\cong H_{dR}^{q+1} (H_I^{j+1}(R)).
\]
\end{lm}
Now, we are in position to give a partial positive answer to Question \ref{quoembedd} as follows.

\begin{prop} \label{HLYembedd}
Let $\mathbb{K}$ be a field of characteristic zero, let $R=\mathbb{K}[\![x_1, \ldots, x_n]\!]$, and let $I$ be an ideal of $R$. Suppose that $S=\mathbb{K}[\![x_1, \ldots, x_n, x_{n+1}, \ldots, x_m]\!]$, $m >n$ and $I'=IS+(x_{n+1}, \ldots, x_m)$ is an ideal of $S$. Then, the HLY conjecture holds for $R, I$ with respect to the integer $j$ if and only if the same holds for $S, I'$ with respect to the integer $j+m-n$. 
\end{prop}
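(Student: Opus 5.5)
Our approach reduces both sides to the de Rham criterion of Corollary \ref{derhamdim} and then moves between them using Lemma \ref{Switala ascend}. Corollary \ref{derhamdim} says that, for a nonzero local cohomology module over a formal power series ring in $n$ variables over $\mathbb{K}$, HLY holds at $j$ if and only if the top de Rham cohomology $H_{dR}^n(H_I^j(R))$ is nonzero. So the statement will follow once we know two facts:
\[
H_I^j(R)\neq 0\iff H_{I'}^{j+m-n}(S)\neq 0,
\qquad
H_{dR}^n(H_I^j(R))\cong H_{dR}^m\bigl(H_{I'}^{j+m-n}(S)\bigr).
\]

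First, we argue by induction on $m-n$, so it suffices to treat $m=n+1$. We write $S=R[\![z]\!]$ with $z=x_{n+1}$, and $I'=IS+(z)$; the general case follows by adjoining one variable at a time.

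Second, we check the nonvanishing equivalence. The ideals $IS$ and $(z)$ live in disjoint sets of variables, so the Künneth/Mayer--Vietoris spectral sequence (or Brodmann's sequence with $z$) collapses. Concretely, $H_z^1(-)$ applied to $H_{IS}^j(S)\cong H_I^j(R)\widehat{\otimes}\,\mathbb{K}[\![z]\!]$ is used together with flat base change along $R\to S$. Since $H_{(z)}^i(S)$ vanishes for $i\neq 1$, we get
\[
H_{I'}^{j+1}(S)\cong H_{(z)}^1\bigl(H_{IS}^j(S)\bigr)\cong H_I^j(R)\otimes_{\mathbb{K}}\mathbb{K}[z^{-1}]z^{-1}
\]
as $R$-modules, and in particular this module is nonzero exactly when $H_I^j(R)$ is. Here $H_{IS}^{j}(S)=H_I^j(R)\otimes_R S$ by flatness.

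Third, we apply Lemma \ref{Switala ascend} with $q=n$. Its isomorphism, read with the correct target $H_{dR}^{n+1}(H_{I'}^{j+1}(R'))$ as intended in Switala's result, gives
\[
H_{dR}^n(H_I^j(R))\cong H_{dR}^{n+1}(H_{I'}^{j+1}(S)).
\]
Combining this with Corollary \ref{derhamdim} applied to $(R,I,j)$ and to $(S,I',j+1)$, each with its own number of variables, yields the equivalence. Iterating the argument gives the shift $j\mapsto j+m-n$.

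The main obstacle is the nonvanishing step, because the de Rham criterion requires the module to be nonzero on both sides. The spectral sequence argument needs care since $H_I^j(R)\otimes_R S$ is not a completed tensor product. If that becomes delicate, we would instead use the direct description $H_{(z)}^1(M\otimes_R S)\cong\bigoplus_{k\ge1}M z^{-k}$ for an $R$-module $M$. This shows that $H_{I'}^{j+1}(S)$ contains $H_I^j(R)$ as an $R$-direct summand, and also that $H_{I'}^0(S)=0$, which settles the edge case of the statement.
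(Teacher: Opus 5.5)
Your strategy is the paper's: Corollary \ref{derhamdim} on both sides, with Lemma \ref{Switala ascend} iterated in between. Your nonvanishing computation $H^{j+1}_{I'}(S)\cong H^1_{(z)}(H^j_I(R)\otimes_R S)\cong\bigoplus_{k\ge 1}H^j_I(R)z^{-k}$ is correct, and it supplies something the paper skips.

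\textbf{The gap.} Your chain of equivalences uses Corollary \ref{derhamdim} in both directions, but only one half of that corollary is valid. Theorem \ref{Hartshorne Polini theorem} produces $\mathcal{D}$-linear surjections onto $E$, so nonzero top de Rham cohomology does give HLY. HLY, however, only asks for an $R$-linear surjection, which need not be $\mathcal{D}$-linear, and the converse is false. Take $R=\mathbb{K}[\![x,y]\!]$ and $I=(x)$:
\begin{itemize}
\item Since $H^1_{(x)}(R)\cong\bigoplus_{k\ge1}\mathbb{K}[\![y]\!]x^{-k}$ and $\partial_y$ is surjective on $\mathbb{K}[\![y]\!]$ in characteristic zero, $H^2_{dR}(H^1_{(x)}(R))=0$.
\item Yet HLY holds at $1$, by item (i) of the Introduction. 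Directly: $H^1_{(x)}(R)^{\vee}$ is the module of series $\sum_{l\ge0}h_lx^l$ with $h_l$ in the inverse system $\mathbb{K}[y^{-1}]$, and the element $\sum_l y^{-2^l}x^l$ has zero annihilator.
\end{itemize}
Each of your two implications passes once through the false half, ``HLY $\Rightarrow$ nonzero top de Rham cohomology''. The paper's own proof has the same defect: it applies $H^n_{dR}$ to an $R$-linear surjection $H^j_I(R)\to E$ as though it were a map of $\mathcal{D}$-modules.

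\textbf{The reverse implication is false as stated.} Let $n=1$, $I=0$, $j=0$, $m=2$. Then $H^0_0(R)=R\neq 0$ is finitely generated and $R^{\vee}=E$ has $\Ass_R(E)=\{\mathfrak{m}\}$, so HLY fails for $(R,0,0)$ (cf.\ Discussion \ref{HLY not in the finitely generated case}). On the other side, $I'=(x_2)$, and HLY holds for $(S,(x_2),1)$ by the example above with the variables renamed. So ``$\Leftarrow$'' needs at least $I\neq 0$. Even then, Lemma \ref{Switala ascend} gives no handle on it, because it only relates de Rham cohomology, which does not detect $R$-linear surjections.

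\textbf{The forward implication is true.} Your own computation proves it, with no de Rham cohomology and no assumption on the characteristic. As $S$-modules,
\[
H^{j+1}_{I'}(S)\cong H^j_I(R)\otimes_R H^1_{(z)}(S)
\quad\text{and}\quad
E_S\cong H^{n+1}_{\mathfrak{m}_S}(S)\cong E\otimes_R H^1_{(z)}(S),
\]
naturally in the first factor. Tensoring an $R$-linear surjection $H^j_I(R)\to E$ over $R$ with $H^1_{(z)}(S)$ therefore gives an $S$-linear surjection $H^{j+1}_{I'}(S)\to E_S$. Adding the variables one at a time gives the case of general $m$.
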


\begin{proof}
By assumption, there is a surjection $\xymatrix@1{H_{I}^j(R)\ar@{->>}[r]&  H_{\fm R}^n (R)}$. Applying $H^n_{dR}(-)$ to the previous surjection, we obtain the following surjection $\xymatrix@1{H^n_{dR}(H_{I}^i(R))\ar@{->>}[r]&  H^n_{dR}(H_{\fm R}^n (R))\cong \mathbb{K}}$ of $\mathbb{K}$--vector spaces. By applying Lemma \ref{Switala ascend} $m-n$ times, this surjection can be identified with the surjection $\xymatrix@1{H^{m=n+m-n}_{dR}(H_{I'}^{j+m-n}(R))\ar@1{->>}[r]& \mathbb{K}}$.
Now we are done thanks to Corollary \ref{derhamdim}.
\end{proof}


\subsection{The essentially finite type case}

Our goal now is proving the HLY conjecture for some regular local rings that are essentially of finite type over a field of characteristic zero, following mainly the approach developed by Dao and Takagi along \cite[Theorem 2.4]{DaoTakagicdim}. The first main result of this part reads as follows.

\begin{prop} \label{generalize for depth}
Let $R=\mathbb{C}[x_1, \ldots, x_n]_{(x_1, \ldots, x_n)}$ and let $I\subset R$ be an ideal with $\depth (R/I)=2$ such that $H^2_{\fm}(R/I)$ is a finite dimensional $R/\fm$--vector space, where $\fm=(x_1, \ldots, x_n) R$. If $H_{I}^{n-2}(R) \neq 0$, then $(0)\in\coass_R (H_I^{n-2}(R))$.
\end{prop}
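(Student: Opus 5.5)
Following the approach of Dao and Takagi \cite[Theorem 2.4]{DaoTakagicdim}, we plan to reduce to the complete case and then compute the de Rham cohomology of $H_I^{n-2}$ via the comparison with singular cohomology of the punctured neighbourhood. By Lemma \ref{reduction for diff. adm. alg.} (the localized polynomial ring over $\mathbb{C}$ is differentiable admissible), it is enough to produce an epimorphism $H_{I\widehat{R}}^{n-2}(\widehat{R})\twoheadrightarrow E$ over $\widehat{R}=\mathbb{C}[\![x_1,\ldots,x_n]\!]$. By Corollary \ref{derhamdim}, this amounts to showing $\dim_{\mathbb{C}} H_{dR}^n(H_I^{n-2}(\widehat{R}))\geq 1$.

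First we dispose of the higher modules. The hypothesis $\depth(R/I)=2$ gives $H_{\fm}^0(R/I)=H_{\fm}^1(R/I)=0$. By Hartshorne's connectedness result \cite[Proposition 2.1]{Hartshorne1962} the punctured spectrum of $R/I$ is connected, and $\dim R/I\geq 2$. The second vanishing theorem and Grothendieck/Hartshorne--Lichtenbaum vanishing then give $H_I^n=H_I^{n-1}=0$, so $\cdim(I)\leq n-2$. (The case $\cdim(I)<n-2$ is excluded by the hypothesis $H_I^{n-2}(R)\neq 0$.)

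Next we relate $H_{dR}^{*}(H_I^{j}(\widehat{R}))$ to topology. We use the Hodge--de Rham type spectral sequence $E_2^{p,q}=H_{dR}^p(H_I^q(\widehat{R}))\Rightarrow H^{p+q}_{Z,\mathrm{dR}}$, with $Z=V(I)$, which by Hartshorne's comparison \cite{Har1975} computes local (singular) cohomology with supports of $\mathbb{C}^n$ at the origin along $Z$. Since $\cdim(I)\leq n-2$, the top row $q=n-2$ is the last nonzero one. The corner term $E_2^{n,n-2}$ therefore survives, giving $H_{dR}^n(H_I^{n-2})\cong H^{2n-2}_{Z,0}$. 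By duality this is isomorphic to $H_2$ of the germ of $Z$ at $0$, modulo the lower rows.

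A cleaner route, which is Dao--Takagi's, is to identify $\dim H_{dR}^n(H_I^{n-2})$ with $\dim_{\mathbb{C}} H^1(U;\mathbb{C})$, where $U$ is the link (punctured neighbourhood) of $Z$ at $0$. Equivalently, via the exponential sequence, we would relate it to the local algebraic invariant $H_{\fm}^2(R/I)$: the finite length of $H_{\fm}^2(R/I)$ plays the role of a Picard-type finiteness, and analytic GAGA-type comparison (as in Dao--Takagi) shows $H^1(U,\mathcal{O}_U)\cong H_{\fm}^2(R/I)$ is finite dimensional. We would then show $H^2(U;\mathbb{C})\neq 0$ (or the appropriate degree) whenever $H_I^{n-2}\neq 0$, and translate this back into $H^n_{dR}(H^{n-2}_I)\neq 0$.

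The main obstacle is this last step: controlling the spectral sequence, so that the lower rows $H_I^q$ with $q<n-2$ do not kill the corner term, and using finite length of $H^2_{\fm}(R/I)$ to force nonvanishing of the relevant singular cohomology. I would first try to prove the nonvanishing by contradiction. If $H_{dR}^n(H_I^{n-2})=0$, then by Corollary \ref{derhamdim} and the structure of the holonomic module $H_I^{n-2}$, Dao--Takagi's argument (a topological vanishing forcing $\cdim\leq n-3$ under the finiteness hypothesis on $H^2_{\fm}(R/I)$) would yield $H_I^{n-2}(R)=0$, contradicting the assumption.
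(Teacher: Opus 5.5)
\textbf{There is a genuine gap: the decisive nonvanishing is never proved.} Your reductions are sound. Lemma \ref{reduction for diff. adm. alg.} together with Corollary \ref{derhamdim} reduces the statement to $\dim_{\mathbb{C}}H^n_{dR}(H^{n-2}_{I\widehat{R}}(\widehat{R}))\geq 1$. Also, $\cdim(I)\leq n-2$ does make $E_2^{n,n-2}$ the only surviving term in total degree $2n-2$.

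But that nonvanishing is the whole content of the statement, and you only announce it (``we would then show'') before calling it the main obstacle. Identifying the corner term with a Betti number of the link merely restates the problem. By your own degree count it is the first Betti number, not $H^2$, and nothing is taken ``modulo the lower rows''.

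The fallback contradiction argument also fails as written. Vanishing of $H^n_{dR}(H^{n-2}_I)$ does not by itself force $H^{n-2}_I=0$: a nonzero local cohomology module with vanishing top de Rham cohomology is exactly what a failure of HLY looks like (compare Theorem \ref{1stfailure}). The ``structure of the holonomic module'' that would rescue the argument is never identified. Relatedly, you never actually use the hypothesis that $H^2_\fm(R/I)$ has finite length; the exponential-sequence and Picard remarks are not where it enters.

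\textbf{The missing idea is that the hypotheses force $\stsupp_R H^{n-2}_I(R)\subseteq\{\fm\}$.} By local duality, $\depth R/I=2$ gives $\Ext^{n}_R(R/I,R)=\Ext^{n-1}_R(R/I,R)=0$. Finite length of $H^2_\fm(R/I)$ gives finite length of $\Ext^{n-2}_R(R/I,R)$. Now take $\fp\supseteq I$ with $\fp\neq\fm$.
\begin{enumerate}
\item If $\height\fp<n-2$, Grothendieck vanishing kills $H^{n-2}_{IR_\fp}(R_\fp)$.
\item If $\height\fp=n-2$, dualizing $\Ext^{n-2}_{R_\fp}(R_\fp/IR_\fp,R_\fp)=0$ gives $H^0_{\fp R_\fp}(R_\fp/IR_\fp)=0$. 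So $\fp$ is not minimal over $I$, and Hartshorne--Lichtenbaum vanishing applies.
\item If $\height\fp=n-1$, the two vanishing $\Ext$ modules (in degrees $n-1$ and $n-2$) give $\depth R_\fp/IR_\fp\geq 2$. Ogus's second vanishing theorem then gives $H^{n-2}_{IR_\fp}(R_\fp)=0$.
\end{enumerate}

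Proposition \ref{zero dimensional case} now yields $H^{n-2}_I(R)\cong E^{\oplus w}$, with $w\geq 1$ because the module is nonzero. Projecting onto one summand gives the required surjection onto $E$.

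This is the paper's proof. It is precisely the support step of Dao--Takagi's argument, which is the only part of their method you need. Once you have it, the de Rham and topological detour is superfluous, since $H^n_{dR}(E^{\oplus w})\cong\mathbb{C}^w\neq 0$.
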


\begin{proof}
We will show that $H_I^{n-2}(R)$ is supported at the maximal ideal. In that case, the result will follow by Proposition \ref{zero dimensional case}, because in that case $H_I^{n-2}(R)$ is isomorphic to a finite number of copies of $E\cong H_{\fm_R}^n (R)$. Hereafter, we plan to use the same strategy employed by Dao and Takagi along the proof of \cite[Theorem 2.4]{DaoTakagicdim}.

Indeed, we must show that, for any $I \subseteq \fp \in \spec R$ with  $\fp \neq \fm$, $H_{IR_{\fp}}^{n-2}(R_{\fp})=0$. As $\dim R_{\fp}=\height (\fp)$ and $H_{IR_{\fp}}^{n-2}(R_{\fp})=0$ for $\height (\fp) < n-2$ So, it remains to check the vanishing when $\height (\fp)\in\{n-2, n-1\}$. By the Lichtenbaum-Hartshorne vanishing Theorem, $H_{IR_{\fp}}^{n-2}(R_{\fp})=0$, whenever $\height (\fp)=n-2$. Therefore, it is enough to show the vanishing for $\height (\fp)=n-1$. Using \cite[Theorem 2.11]{Ogus1973}, we need to prove $\depth R_{\fp}/I R_{\fp} \geq 2$. By the assumption, $H^2_{\fm}(R/I)$ is a finite dimensional $R/\fm$--vector space. It is the same to suppose that its dual $\Ext^{n-2}_R(R/I,R)$ is supported at the maximal ideal. On the other hand, since $\depth R/I \geq 2$, using local duality once again we obtain $\Ext^{n-1}_R(R/I,R)=0$. Summing up, we may have $\Ext^{i}_{R_{\fp}}(R_{\fp}/I R_{\fp},R_{\fp})=0$ for $i\in\{n-1, n-2\}$. This finishes the proof.
\end{proof}
We can extend a bit more Proposition \ref{generalize for depth} as follows.

\begin{prop}\label{the real e.f.t case}
Let $R=\mathbb{R}[X_1,\ldots,X_d]_{(X_1,\ldots, X_d)}$ and let $I\subset R$ be an ideal with $\depth (R/I)=2$ such that $H^2_{\fm}(R/I)$ is a finite dimensional $R/\fm$--vector space, where $\fm=(X_1, \ldots, X_d) R$. If $H_{I}^{d-2}(R) \neq 0$, then $(0)\in\coass_R (H_I^{d-2}(R))$.
\end{prop}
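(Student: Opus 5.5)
The argument will follow the proof of Proposition \ref{generalize for depth}. The only change is that $\mathbb{C}$ is replaced by $\mathbb{R}$, so the one step that depends on the base field must be checked separately. As before, the plan is to show that $\stsupp_R (H_I^{d-2}(R))\subseteq\{\fm\}$. Proposition \ref{zero dimensional case}(i) then applies: $R$ contains $\mathbb{Q}$ and $H_I^{d-2}(R)$ is a $\mathcal{D}$--module, so $H_I^{d-2}(R)\cong E^{\oplus w}$ with $w\geq 1$ because the module is non--zero. Projecting onto one summand gives a surjection $H_I^{d-2}(R)\twoheadrightarrow E\cong H_{\fm}^d(R)$, that is, $(0)\in\coass_R(H_I^{d-2}(R))$.

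For the support claim, fix $\fp\in V(I)$ with $\fp\neq\fm$ and show $H_{IR_\fp}^{d-2}(R_\fp)=0$. If $\height(\fp)<d-2$, Grothendieck vanishing gives this. If $\height(\fp)=d-2$, note that $IR_\fp$ is not $\fp R_\fp$--primary: otherwise $\fp\in\Ass(R/I)$ with $\dim R/\fp=2$, and one checks this is excluded by the hypotheses, since such a prime forces $\Ext_R^{d-2}(R/I,R)_\fp\neq 0$. The Hartshorne--Lichtenbaum theorem then gives the vanishing. The essential case is $\height(\fp)=d-1$, where I want $\depth(R_\fp/IR_\fp)\geq 2$. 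By local duality, $H_\fm^2(R/I)$ of finite length means $\Ext_R^{d-2}(R/I,R)$ is supported only at $\fm$. Also $\depth(R/I)=2$ gives $\Ext_R^{d-1}(R/I,R)=\Ext_R^{d}(R/I,R)=0$. Localizing at $\fp$ and applying local duality over $R_\fp$, which has dimension $d-1$, we get $H^i_{\fp R_\fp}(R_\fp/IR_\fp)=0$ for $i\leq 1$. Hence $\depth(R_\fp/IR_\fp)\geq2$, and $R_\fp/IR_\fp$ has dimension at most $1$ unless $IR_\fp=0$, which is excluded since $I\neq 0$ away from trivial cases. So $\dim R_\fp/IR_\fp\leq 1<2\leq\depth$ forces $IR_\fp=R_\fp$ or $\fp\notin V(I)$; in either case the localized module vanishes.

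\textbf{Main obstacle.} Over $\mathbb{C}$ the argument invoked Ogus' criterion \cite[Theorem 2.11]{Ogus1973}, which relates vanishing of $H^{\height\fp-1}$ to the depth of the quotient and in general needs a separably closed (or algebraically closed) residue field. Here the residue field of $R_\fp$ need not be separably closed. I would handle this by faithfully flat base change to $\mathbb{C}$: the extension $R\to S:=\mathbb{C}[X_1,\ldots,X_d]_{(X_1,\ldots,X_d)}$ is flat and local with $\fm S$ the maximal ideal. Along it, $\depth$, finiteness of $H^2_{\fm}$ and non--vanishing of $H_I^{d-2}$ are all preserved, because $H^i_{IS}(S)=H^i_I(R)\otimes_R S$ and the map is faithfully flat. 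Proposition \ref{generalize for depth} applied to $IS$ shows that $H_{IS}^{d-2}(S)$ is supported only at $\fm S$. By faithful flatness, $\stsupp_R(H_I^{d-2}(R))\subseteq\{\fm\}$, since any prime in the support lies under a prime in the support of the tensored module. This descends the support statement directly, and the conclusion then follows from Proposition \ref{zero dimensional case} exactly as above. This descent step is the one I would write out carefully, and it also makes the separate height analysis of the previous paragraph unnecessary.
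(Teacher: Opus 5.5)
Your argument is correct. You use the same reduction as the paper: base change along the finite free local extension $R\to S=\mathbb{C}[X_1,\ldots,X_d]_{(X_1,\ldots,X_d)}$, which preserves all the hypotheses. The difference is in what you descend.

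\textbf{The paper's descent.} It applies Proposition~\ref{generalize for depth} to $IS$ as a black box, obtaining $(0)\in\coass_S(H^{d-2}_{IS}(S))$. It then pulls this back through Lemma~\ref{HLY and finite flat extensions}, i.e.\ Z\"oschinger's formula $\coass_R(H\otimes_R S)=\coass_R(H)\cap\stsupp_R(S)$. That step needs $S$ to be module-finite over $R$. It also tacitly uses $E_S\cong E^{\oplus 2}$ as $R$--modules to pass from $\coass_S$ to $\coass_R$.

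\textbf{Your descent.} You descend the intermediate statement $\stsupp_S(H^{d-2}_{IS}(S))\subseteq\{\fn\}$, which needs only faithful flatness. You then apply Proposition~\ref{zero dimensional case} directly over $\mathbb{R}[X_1,\ldots,X_d]_{(X_1,\ldots,X_d)}$. This bypasses Z\"oschinger's result and would also work for non-finite faithfully flat extensions. The price is that you must quote the \emph{proof} of Proposition~\ref{generalize for depth}: its statement only gives $(0)\in\coass$, not the support bound.

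\textbf{The part you discard.} Your height $d-2$ justification is correct, and it supplies a detail the paper's proof of Proposition~\ref{generalize for depth} leaves implicit. In the height $d-1$ case, however, the claim $\dim R_\fp/IR_\fp\leq 1$ is unjustified and false in general. The hypotheses actually force $\dim R/I\geq 3$: otherwise $H^2_{\fm}(R/I)$ would be a top local cohomology module of positive dimension, which is never of finite length. So nothing forces $IR_\fp=R_\fp$. What is needed there is the depth form of the second vanishing theorem: $\depth R_\fp/IR_\fp\geq 2\Rightarrow H^{\height\fp-1}_{IR_\fp}(R_\fp)=0$.

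\textbf{Your stated obstacle.} It is not a real one. Even over $\mathbb{C}$, the residue field of $R_\fp$ at a height $d-1$ prime is a function field, not separably closed. The depth form of the criterion holds without such a hypothesis. Depth is preserved by the flat local map to the completed strict henselization, and there Hartshorne's connectedness theorem and the usual second vanishing theorem apply. So the argument of Proposition~\ref{generalize for depth} in fact runs verbatim over $\mathbb{R}$, and your base change is a convenience rather than a necessity.
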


\begin{proof}
The extension $\xymatrix@1{R\ar@{^(->}[r]& \mathbb{C}[X_1,\ldots,X_d]_{(X_1,\ldots, X_d)}}$ is a flat, local and finite ring homomorphism. Moreover, $\depth (R/I\otimes\mathbb{C})=\depth (R/I)=2$, and $H_{\fm}^2 (R/I)\otimes\mathbb{C}$ is a finite dimensional $\mathbb{C}$--vector space. Therefore, thanks to Lemma \ref{HLY and finite flat extensions}, we can reduce to the case where $R=\mathbb{C}[X_1,\ldots,X_d]_{(X_1,\ldots,X_d)}$. In this case, we are done by Proposition \ref{generalize for depth}.
\end{proof}

\section{The mixed characteristic case} \label{Mixed}
The purpose of this section is to explore the HLY cojecture for regular local rings of mixed characteristic. More precisely, in what follows, let $(R,\fm, \mathbb{K})$ be a local ring of mixed characteristic with $\charac \mathbb{K}=p > 0$. It is called ramified if $p \in \fm^2$
and it is unramified if $p$ is not in $\fm^2$.

In mixed characteristic, what is known about the HLY conjecture is the following theorem proved by Zhang \cite[Theorem 1.3]{ZhangFmodules}.

\begin{teo} [HLY in mixed characteristic] \label{zhangfailure}
Let $R=V[\![x_2,\ldots,x_d]\!]$ be a formal power series ring over a complete discrete valuation ring $(V,\pi V,\mathbb{K})$ of mixed characteristic $(0,p)$, let $I\subset R$ be an ideal, and let $j\geq 0$ be an integer. In addition, assume that $\mathbb{K}$ is a finite--dimension $\mathbb{K}^p$--vector space. If either $\ker(\xymatrix@1{H_I^j (R)\ar[r]^-{\cdot\pi}& H_I^j (R)})\neq 0$ or $\Coker(\xymatrix@1{H_I^j (R)\ar[r]^-{\cdot\pi}& H_I^j (R)})\neq 0$, then we have that $\spec (R/\pi R)\subseteq\stsupp_R (H_I^j (R)^{\vee})$.  
\end{teo}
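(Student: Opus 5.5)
The plan is to reduce to Lyubeznik--Yildirim-type $F$-module arguments modulo $\pi$, following Zhang. Write $M=H_I^j(R)$, $\bar R=R/\pi R\cong \mathbb{K}[\![x_2,\ldots,x_d]\!]$, and recall from Lemma \ref{reformulations of HLY} that $\stsupp_R(M^{\vee})=\cosupp_R(M)$. Since this set is closed under specialization, it suffices to show that the prime $\pi R$, which is the generic point of $\spec(R/\pi R)$, lies in $\cosupp_R(M)$. Equivalently, we must show $\pi R\in\Ass_R(M^{\vee})$, or at least in its support.

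Apply $\Hom_R(-,E)$ to the exact sequence
\[
0\to K\to M\xrightarrow{\cdot\pi} M\to C\to 0,
\]
where $K=\ker(\cdot\pi)$ and $C=\Coker(\cdot\pi)$. This gives $0\to C^{\vee}\to M^{\vee}\xrightarrow{\cdot\pi}M^{\vee}\to K^{\vee}\to 0$. Hence $C^{\vee}=(0:_{M^{\vee}}\pi)$ and $K^{\vee}=M^{\vee}/\pi M^{\vee}$, and both are $\bar R$-modules. Note that $E_{\bar R}=(0:_E\pi)$, so for an $\bar R$-module $N$ we have $N^{\vee}=\Hom_{\bar R}(N,E_{\bar R})$. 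It therefore suffices to prove that whichever of $C$ or $K$ is nonzero has full cosupport over $\bar R$, that is, $(0)\in\Ass_{\bar R}$ of its $\bar R$-Matlis dual. Indeed, if $C^{\vee}\neq0$ has $\pi R$ associated, then $\pi R\in\Ass_R(M^{\vee})$ because $C^{\vee}\subseteq M^{\vee}$. If instead $K^{\vee}$ has $\pi R$ in its support, then $\pi R\in\stsupp_R(M^{\vee})$ because $K^{\vee}$ is a quotient of $M^{\vee}$.

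The key step is to give $C$ and $K$ the structure of $F$-finite $F$-modules over the regular ring $\bar R$ of characteristic $p$ and then invoke Theorem \ref{HLY in prime characteristic}. The long exact sequence of $H_I^{\bullet}$ applied to $0\to R\xrightarrow{\pi}R\to\bar R\to0$ embeds $C$ into $H_{I\bar R}^j(\bar R)$ and realises $K$ as a quotient of $H_{I\bar R}^{j-1}(\bar R)$. Both of these are $F$-finite $F$-modules by Lyubeznik's theory; the hypothesis $[\mathbb{K}:\mathbb{K}^p]<\infty$ is needed here so that $\bar R$ is $F$-finite. I would try to show that the connecting maps are $F$-module morphisms, using Zhang's identification of these pieces as images and kernels in a natural sense. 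If that works, $C$ and $K$ are $F$-finite $F$-modules, since this class is closed under kernels and cokernels.

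To apply Theorem \ref{HLY in prime characteristic} I also need $(0)\notin\Ass_{\bar R}(C)$, and likewise for $K$. Here the idea is that $C$ and $K$ are subquotients of local cohomology of $\bar R$ supported on $I\bar R$. If $I\bar R$ is not nilpotent, these modules are $I\bar R$-torsion and hence not supported at $(0)$. The degenerate case $\sqrt{I\bar R}=\fm_{\bar R}$, and in particular $\pi\in\sqrt I$, needs separate care. There I would use Proposition \ref{zero dimensional case}(ii): it makes the module a direct sum of copies of $E_{\bar R}$, whose Matlis dual is free, so $(0)$ is associated.

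The main obstacle is the compatibility of the Frobenius structures across the connecting homomorphisms. $M$ itself carries no Frobenius, so the $F$-module structure on $C$ and $K$ must be constructed intrinsically, as in Zhang's work, rather than inherited from $M$. This is the step I would expect to require the most work.
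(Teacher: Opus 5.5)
The paper gives no proof of this statement; it simply quotes Zhang's theorem. Your proposal therefore has to stand on its own, and it has two genuine gaps.

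\textbf{The F-module structures on $C$ and $K$ are not supplied.} The formal reductions are fine. Dualizing $0\to K\to M\to M\to C\to 0$ gives $C^{\vee}=(0:_{M^{\vee}}\pi)$ and $K^{\vee}=M^{\vee}/\pi M^{\vee}$, and since $\stsupp_R(M^{\vee})$ is closed under specialization it suffices to put $\pi R$ in it. But the step that carries all the content, equipping $C$ and $K$ with $F$-finite $F$-module structures over $\bar R$, is deferred to ``Zhang's identification'', which is circular. It is also not a formality. $M=H_I^j(R)$ has no Frobenius action, and both $C\cong\operatorname{im}(H_I^j(R)\to H_{I\bar R}^j(\bar R))$ and $K=\operatorname{im}(H_{I\bar R}^{j-1}(\bar R)\to H_I^j(R))$ are defined through $R$ itself. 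There a Frobenius lift need not preserve $\sqrt{I}$: for $V=W(\mathbb{K})$ with $\mathbb{K}$ perfect, the lift with $x_2\mapsto x_2^p$ sends $x_2-p$ to the prime $x_2^p-p$. At best you can make the Bockstein composite $H_{I\bar R}^{j-1}(\bar R)\to H_I^j(R)\to H_{I\bar R}^{j}(\bar R)$ Frobenius-compatible, using a lift of Frobenius to $R/\pi^2R$, where $\pi$ is nilpotent. Its image, however, is only $K/(K\cap \pi M)$, which vanishes whenever $K\subseteq\pi M$.

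\textbf{Even with such structures, your sufficient condition fails in a case you did not isolate.} The problematic case for $(0)\notin\Ass_{\bar R}$ is not $\sqrt{I\bar R}=\fm_{\bar R}$; there everything is $\fm$-torsion and $(0)$ is not associated anyway. The problematic case is $I\bar R=0$, i.e.\ $I\subseteq \pi R$. Take $d=2$, $I=\pi R$, $j=1$, so $M=R_{\pi}/R$. Then $C=0$ and $K=(0:_M\pi)\cong \bar R$, so $C^{\vee}=0$ and $K^{\vee}\cong E_{\bar R}$, whose support is just $\{\fm\}$. Yet the conclusion holds: $\depth(R/\pi R)=1$, so Theorem~\ref{HStextend} gives $(0)\in\coass_R(M)$, hence $\stsupp_R(M^{\vee})=\spec(R)$. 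So $\pi R\in\stsupp_R(M^{\vee})$ is witnessed by neither $C^{\vee}$ nor $K^{\vee}$; multiplication by $\pi$ is bijective on the nonzero module $(M^{\vee})_{\pi R}$. Proposition~\ref{zero dimensional case}(ii) cannot rescue this, because $K\cong\bar R$ is not supported at the maximal ideal. This case needs a different argument, going through the generic point rather than through $\pi$-torsion or $\pi$-cotorsion. Relatedly, for $I=0$, $j=0$ and $d\geq 2$ the statement as written is false ($M=R$, $M^{\vee}=E$). That is exactly where your $C=\bar R$ has $(0)$ as an associated prime, so a hypothesis such as $I\neq 0$ is implicitly required.
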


Keeping in mind Zhang's statement, it is clear that the remaining cases to study the conjecture in mixed characteristic is when the uniformizer $\pi$ of the base ring is either regular or coregular on $H_I^j (R)$. Recall that coregular sequences have been studied systematically in the setting of local cohomology modules by Hartshorne and Polini in \cite{HartshornePolinicoregular}.

\begin{lm}\label{first step towords HLY in mixed characteristic}
Let $(R,\fm)$ be a Gorenstein local domain of dimension $d$, let $I\subset R$ be an ideal, and let $0\neq r\in R$ such that $\overline{R}:=R/rR$ is a Gorenstein local ring. Assume that there is an integer $j\geq 1$ such that:

\begin{enumerate}[(i)]

\item $H_I^j (R)\neq 0$.

\item Either $H_I^{j+1}(R)=0$ or the multiplication by $r$ $\xymatrix@1{H_I^{j+1}(R)\ar[r]^-{\cdot r}& H_I^{j+1}(R)}$ is injective.

\item $(0)\in\coass_{\overline{R}}(H_I^j (\overline{R}))$.

\end{enumerate}
Then, there is a surjective $R$--homomorphism $\xymatrix@1{H_I^j (R)\ar@{->>}[r]& \overline{E}}$, where
\[
\overline{E}:=\left\{\xi\in H_{\fm}^d (R):\ r\cdot\xi=0\right\}.
\]
\end{lm}

\begin{proof}
First, the short exact sequence $\xymatrix@1{0\ar[r]& R\ar[r]^-{\cdot r}& R\ar[r]& \overline{R}\ar[r]& 0}$ induces the following exact sequence of $I$--supported local cohomology modules.
\[
\xymatrix{H_I^j(R)\ar[r]^-{\cdot r}& H_I^j(R)\ar[r]& H_I^j (\overline{R})\ar[r]& H_I^{j+1}(R)\ar[r]^-{\cdot r}& H_I^{j+1}(R).}
\]
Our assumption (ii) implies that this exact sequences reduces to the following one.
\[
\xymatrix{H_I^j(R)\ar[r]^-{\cdot r}& H_I^j(R)\ar[r]& H_I^j (\overline{R})\ar[r]& 0.}
\]
On the other hand, our assumption (iii) combined with the fact that $\overline{R}$ is Gorenstein of dimension $d-1$ implies that there is a surjective $\overline{R}$--module map $\xymatrix@1{H_I^j (\overline{R})\ar@{->>}[r]& H_{\fm}^{d-1}(\overline{R})}$. Now, we also observe that we have the following exact sequence
\[
\xymatrix{H_{\fm}^{d-1}(R)\ar[r]& H_{\fm}^{d-1}(\overline{R})\ar[r]& H_{\fm}^d (R)\ar[r]^-{\cdot r}& H_{\fm}^d (R)\ar[r]& H_{\fm}^d (\overline{R}).}
\]
Since both $R$ and $\overline{R}$ are Gorenstein local of dimension $d$ and $d-1$ respectively, we have that $H_{\fm}^{d-1}(R)=0=H_{\fm}^d (\overline{R})$. In this way, this exact sequence boils down to the following short one.
\begin{equation}\label{second surjection}
\xymatrix{0\ar[r]& H_{\fm}^{d-1}(\overline{R})\ar[r]& H_{\fm}^d (R)\ar[r]^-{\cdot r}& H_{\fm}^d (R)\ar[r]& 0.}
\end{equation}
Thus, \eqref{second surjection} implies that
\[
H_{\fm}^{d-1}(\overline{R})=\left\{\xi\in H_{\fm}^d (R):\ r\cdot\xi=0\right\}.
\]
Summing up, by combining the surjection $\xymatrix@1{H_I^j (\overline{R})\ar@{->>}[r]& H_{\fm}^{d-1}(\overline{R})}$ with the exactness of \eqref{second surjection} we obtain the desired conclusion.
\end{proof}

\begin{lm}\label{second step}
Let $(R,\fm)$ be a Gorenstein local domain of dimension $d$, let $I\subset R$ be an ideal, and let $0\neq r\in R$ such that $\overline{R}:=R/rR$ is a Gorenstein local ring. Assume that there is an integer $j\geq 1$ such that:

\begin{enumerate}[(i)]

\item $H_I^j (R)\neq 0 \neq H_I^j (\overline{R})$.

\item  Either $H_I^{j+1}(R) = 0$ or the multiplication by $r$ $\xymatrix@1{H_I^{j+1}(R)\ar[r]^-{\cdot r}& H_I^{j+1}(R)}$ is injective.
\end{enumerate}
Then, $(0)\notin\coass_{R}(H_I^j (\overline{R}))$. In addition, if $\lvert\cosupp (H_I^j (R))\rvert=1$, then $(0)\notin\coass_{R}(H_I^j (R))$.
\end{lm}

\begin{proof}
First, the short exact sequence $\xymatrix@1{0\ar[r]& R\ar[r]^-{\cdot r}& R\ar[r]& \overline{R}\ar[r]& 0}$ induces the following exact sequence of $I$--supported local cohomology modules.
\[
\xymatrix{H_I^j(R)\ar[r]^-{\cdot r}& H_I^j(R)\ar[r]& H_I^j (\overline{R})\ar[r]& H_I^{j+1}(R)\ar[r]^-{\cdot r}& H_I^{j+1}(R).}
\]
Our assumption (ii) implies that this exact sequence reduces to the following one.
\begin{equation} \label{1stshort}
 \xymatrix{H_I^j(R)\ar[r]^-{\cdot r}& H_I^j(R)\ar[r]& H_I^j (\overline{R})\ar[r]& 0.}   
\end{equation}
On the other hand, our assumption (ii) combined with (\ref{1stshort}) implies the following commutative diagram:
\begin{equation}
\xymatrix{H_{I}^j (R)\ar[r]^-{\cdot r}\ar@{=}[d]& H_{I}^j (R) \ar@{=}[d] \ar[r]& H_I^j (\overline{R}) \ar[r]\ar[d]& 0\ar@{=}[d]\ar@{=}[r]& 0\ar@{=}[d]\\ H_{I}^j (R)\ar[r]^-{\cdot r}& H_{I}^j (R) \ar[r]& H_I^j (R)/rH_I^j (R) \ar[r]& 0\ar@{=}[r]& 0.}
\end{equation}
Using the Five Lemma, there is an isomorphism $ H_I^j (\overline{R}) \cong H_I^j (R)/rH_I^j (R)$ of $R$-modules. As both of $H_I^j (R)$ and $H_I^j (\overline{R})$ are non-zero, by virtue of \cite[Folgerung 3.2]{Zoschingerminimax} we have that
$$\emptyset \neq  \coass_{R}(H_I^j (\overline{R})) =\coass_{R}(H_I^j (R)) \cap V(r).$$
An immediate consequence of this is that $(0)\notin\coass_{R}(H_I^j (\overline{R}))$. This finishes the proof. 
\end{proof}

\subsection{The unramified case}

Although Zhang's result cover the unramified case, we present the following result which does not rely on Lyubeznik--Yildirim theorem on prime characteristic.

\begin{lm}\label{reductions in the unramified case}
Let $d\geq 2$ be an integer, let $\mathbb{K}$ be a perfect field of prime characteristic $p$, let $W(\mathbb{K})$ be the ring of Witt vectors attached to $\mathbb{K}$, set $R:=W(\mathbb{K})[\![x_2,\ldots,x_d]\!]$, and let $I\subset R$ be an ideal such that $p\in I$. Write $I=J+(p)$, where $p\notin J$ and $J\subset R$ is an ideal. Let $j\geq 1$ be an integer such that $H_I^j (R)\neq 0$. Then, the following assertions hold.

\begin{enumerate}[(i)]

\item If $H_{(p)}^0 (H_J^{j-1}(R))\neq 0$, then $\coass_R (H_{(p)}^0 (H_J^{j-1}(R)))\subseteq\coass_R (H_I^j (R))$.

\item If $H_{(p)}^0 (H_J^{j-1}(R))=0$, then $\coass_R (H_I^j (R))\subseteq\coass_R (H_{JR_p}^{j-1}(R_p))$.

\end{enumerate}
    
\end{lm}

\begin{proof}
By \cite[8.1.2 (ii)]{BroSha}, we have the short exact sequence
\[
\xymatrix{0\ar[r]& H_{(p)}^1 (H_J^{j-1}(R))\ar[r]& H_I^j (R)\ar[r]& H_{(p)}^0 (H_J^{j-1}(R))\ar[r]& 0.}
\]
In this way, part (i) follows immediately from this fact.

Assume now that $H_{(p)}^0 (H_J^{j-1}(R))=0$. By the above exact sequence, we have an $R$--module isomorphism
\[
H_I^j (R)\cong H_{(p)}^1 (H_J^{j-1}(R)\cong H_{(p)}^1 (R)\otimes_R H_J^{j-1}(R).
\]
Since $p$ is a non--zero divisor on $R$, we have the following short exact sequence.
\[
\xymatrix{0\ar[r]& R\ar[r]& R_p\ar[r]& H_{(p)}^1 (R)\ar[r]& 0.}
\]
Tensoring this exact sequence with $H_J^{j-1}(R)$, and keeping in mind that $R_p$ is a flat $R$--module, we obtain the following exact sequence.
\[
\xymatrix{0\ar[r]& \Tor_1^R (H_{(p)}^1 (R),H_J^{j-1}(R))\ar[r]& H_J^{j-1}(R)\ar[r]& R_p\otimes_R H_J^{j-1}(R)\ar[r]& H_I^j (R)\ar[r]& 0.}
\]
Combining the exactness of this sequence jointly with flat base change for local cohomology modules, we can conclude in this case that $\coass_R (H_I^j (R))\subseteq\coass_R (H_{JR_p}^{j-1}(R_p))$. The proof is therefore completed.
\end{proof}
Zhang in \cite[Theorem 1.4]{SVTZhang} obtained the unramified version of the second vanishing Theorem, which states that for a $d$-dimensional complete unramified regular local ring of mixed characteristic, whose residue field is separably closed, the vanishing of 
$H_I^{i} (R)$ for all $i\geq d-1$ is equivalent to connectedness of the punctured spectrum of $R/I$, where $\dim (R/I) \geq 2$. A partial second vanishing Theorem in the ramified case was obtained in \cite{kazumaSVT}.
In this way, we review the following statement obtained by Boix and Eghbali in \cite[Theorem 1.1(3)]{BoixEghbali2023IJAC}.

\begin{teo}
Let $(V,pV,\mathbb{K})$ be a complete unramified Noetherian DVR of mixed characteristic $p >0$, and let $R=V[\![x_1, \ldots, x_n]\!]$.
If $I\subset R$ is an ideal of pure dimension two, then $(0) \in \coass_R(H^{n-1}_I (R))$ if and only if multiplication by $p$ on $H^{n-1}_I (R)$ is surjective.

In other words, HLY holds at $n-1$ if and only if $p$ is a coregular element on $H^{n-1}_I (R)$.
\end{teo}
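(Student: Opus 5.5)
We prove the two implications separately. By Lemma \ref{reformulations of HLY}, $(0)\in\coass_R(H_I^{n-1}(R))$ means there is an $R$--module surjection $H_I^{n-1}(R)\twoheadrightarrow E$. Here $\dim R=n+1$, and $E\cong H_{\fm}^{n+1}(R)$ since $R$ is regular and complete. Since $I$ has pure dimension two, every minimal prime of $I$ has height $n-1$. Hence $H_I^{n-1}(R)\neq 0$, and by Hartshorne--Lichtenbaum $H_I^{n+1}(R)=0$.

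\emph{Necessity.} Suppose $H_I^{n-1}(R)\twoheadrightarrow E$. We show that multiplication by $p$ on $H_I^{n-1}(R)$ is surjective. By Zöschinger's result \cite[Folgerung 3.2]{Zoschingerminimax}, $\coass_R(M/pM)=\coass_R(M)\cap V(p)$ for $M=H_I^{n-1}(R)$. So surjectivity of $p$ amounts to showing that no coassociated prime contains $p$, and this statement does not directly follow from $(0)\in\coass$. The approach is instead to use the structure of $H_I^{n-1}(R)$ coming from the unramified second vanishing theorem.
\begin{itemize}
\item Apply the Mayer--Vietoris/Hochster--Huneke graph argument as in \cite{BoixEghbali2023IJAC}. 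This expresses $H_I^{n-1}(R)$ through modules of the form $H^{n}_{\fp_i+\fp_j}(R)$ and $H_{\fm}^{n+1}(R)$.
\item These pieces are divisible by $p$ exactly when the relevant local cohomology is $p$-torsion-free.
\end{itemize}
In fact, the cleanest route is Zhang's Theorem \ref{zhangfailure}. If the cokernel of $\cdot p$ were non-zero, then $V(p)\subseteq\stsupp(H_I^{n-1}(R)^\vee)$, which alone gives no contradiction. So we argue differently. Because $I$ has pure dimension $2$, $\stsupp H_I^{n}(R)\subseteq\{\fm\}$, and $H_I^{n}(R)$ is then a finite sum of copies of $E$. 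This is the mixed characteristic analogue of Proposition \ref{zero dimensional case}, via \cite{SVTZhang} or Zhang's $F$-module results. Consequently $p$ acts surjectively on $H_I^n(R)$, and its kernel is $E_{R/pR}^{\oplus w}$.

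\emph{Sufficiency.} Assume $p$ is coregular on $H_I^{n-1}(R)$, i.e.\ $\cdot p$ is surjective. Apply the long exact sequence of $0\to R\xrightarrow{p}R\to R/pR\to 0$. This identifies $H_I^{n-1}(R/pR)$ with $\ker(\cdot p\colon H_I^n(R)\to H_I^n(R))$, because the cokernel of $p$ on $H_I^{n-1}(R)$ vanishes. Over the regular ring $\overline R=R/pR=\mathbb K[\![x_1,\dots,x_n]\!]$ of characteristic $p$, the ideal $I\overline R$ has dimension at most $2$. Hence Lyubeznik--Yildirim (Theorem \ref{HLY in prime characteristic}), or a direct top-index argument, gives $(0)\in\coass_{\overline R}(H_I^{n-1}(\overline R))$ when this module is non-zero. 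Then Lemma \ref{first step towords HLY in mixed characteristic} with $r=p$ and $j=n-1$ produces a surjection $H_I^{n-1}(R)\twoheadrightarrow \overline E=(0:_E p)$. This requires condition (ii) of that lemma: $p$ must be injective on $H_I^{n}(R)$, or we must replace that hypothesis by the exact-sequence bookkeeping above. Finally, we lift this to a surjection onto $E$ by the $p$-power argument of Theorem \ref{HStextend}. Replacing $p$ by $p^k$, which is still coregular, gives compatible surjections onto $(0:_E p^k)$. Since every element of $E$ is killed by some $p^k$, these combine into a surjection onto $E=\bigcup_k(0:_Ep^k)$.

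\emph{Main obstacle.} The main difficulty is the necessity direction, together with the compatibility of the surjections onto $(0:_Ep^k)$ in the limit. For necessity, the first thing I would try is to show that the coassociated primes of $H_I^{n-1}(R)$ are exactly $(0)$ together with height-one primes $(f)$. This follows the pattern of Hellus's description (iii) and uses the Mayer--Vietoris decomposition into $H^{n}_{\fp_i+\fp_j}(R)$. Unramifiedness would then be used to show that $(p)$ never occurs among these height-one primes.
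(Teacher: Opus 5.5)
The paper does not prove this statement; it only quotes it from \cite[Theorem 1.1(3)]{BoixEghbali2023IJAC}. Your proposal does not establish either implication.

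\textbf{Sufficiency.} Reducing modulo $p$ cannot work under the hypothesis you assume. Suppose $M:=H_I^{n-1}(R)$ satisfies $pM=M$. Then $\Hom_R(M,N)=0$ for every $N$ annihilated by a power of $p$, because $f(m)=f(p^km')=p^kf(m')=0$. In particular:
\begin{enumerate}
\item[(a)] there is no nonzero map $M\to(0:_Ep^k)$ for any $k$, so the surjections you want to combine in the limit do not exist;
\item[(b)] the map $M\to H_I^{n-1}(\overline R)$ in your long exact sequence is zero, since its image is $M/pM$.
\end{enumerate}
For the same reason Lemma~\ref{first step towords HLY in mixed characteristic} is unusable. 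If $pM=M$, its hypothesis (ii) forces $H_I^{n-1}(\overline R)\cong M/pM=0$, which contradicts (iii). Moreover, (ii) fails outright as soon as $H_I^n(R)\cong E^{\oplus w}$ with $w\geq1$, as your own description of $\ker(\cdot p)$ shows. The $y^k$-trick of Theorem~\ref{HStextend} is available there only because $y$ is \emph{not} surjective on $H_I^{d-1}(R)$: its cokernel is $(0:_Ey)\neq0$. When $p$ is coregular, $(0:_{M^\vee}p)\cong(M/pM)^\vee=0$, so $p$ is a nonzerodivisor on $M^\vee$ and every associated prime of $M^\vee$ avoids $p$. Whether $(0)\in\Ass_R(M^\vee)$ is therefore decided by $M^\vee[1/p]$ over the characteristic-zero ring $R[1/p]$; reduction modulo $p$ is the wrong tool.

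\textbf{Necessity.} You give no argument. As you concede, the Mayer--Vietoris sketch, Zhang's theorem and the structure of $H_I^n(R)$ say nothing about $p$-divisibility of $H_I^{n-1}(R)$. Worse, as literally stated this implication is false: here $\dim R=n+1$, so $n-1$ is the height of $I$.

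Take $n=2$, $R=V[\![x_1,x_2]\!]$, $I=(x_1)$ and $A=V[\![x_2]\!]$. Then $R/I\cong A$ has pure dimension two, and $M=H^1_I(R)=\bigoplus_{k\geq1}A\,x_1^{-k}$. Since $M/pM\cong H^1_{(x_1)}(\mathbb{K}[\![x_1,x_2]\!])\neq0$, $p$ is not coregular on $M$.

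On the other hand, $M^\vee\cong\Hom_A(M,E_A)$ by adjunction, because $E_R$ is the $x_1$-torsion of $\Hom_A(R,E_A)$. Let $\psi\in\Hom_A(M,E_A)$ send $x_1^{-k}$ to the class of $1/(p^{k^2}x_2^{k^2})$ in $E_A=H^2_{(p,x_2)}(A)$. This $\psi$ has zero annihilator:
\begin{enumerate}
\item[(a)] Take $0\neq r=\sum_j a_jx_1^j$, let $a_{j_0}$ be its first nonzero coefficient, of order $t$, and choose $\ell$ with $2\ell-1>t$.
\item[(b)] Every term of $(r\psi)(x_1^{-j_0-\ell})$ other than $a_{j_0}\psi(x_1^{-\ell})$ is killed by $p^{(\ell-1)^2}$. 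Hence $p^{(\ell-1)^2}(r\psi)(x_1^{-j_0-\ell})$ is the class of $p^{(\ell-1)^2}a_{j_0}/(p^{\ell^2}x_2^{\ell^2})$.
\item[(c)] Since $V$ is unramified, $p^{(\ell-1)^2}a_{j_0}$ has order $(\ell-1)^2+t<\ell^2$. So it lies outside $(p^{\ell^2},x_2^{\ell^2})$, and the class in (b) is nonzero.
\end{enumerate}
Thus $(0)\in\coass_R(M)$ while $p$ is not coregular, so the ``only if'' part cannot be proved in this form. Check the indexing against the source: the equal-characteristic companion statement quoted just before concerns the spot $\dim R-1$, which here would be $n$ rather than $n-1$.
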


Next statement roughly says that, under some conditions, HLY does not hold in general in mixed characteristic. The precise statement reads as follows.

\begin{prop}
Let $(V,pV,K)$ be a complete unramified Noetherian DVR of mixed characteristic $p >0$ and let $R=V[\![x_1, \ldots, x_n]\!]$. Suppose that multiplication by $p$ on $H_I^{n-j}(R)$ is surjective and $\dim_K(\Ext^i_R(K,H_I^{n-j}(R)) \neq 0$ and finite for all $i \geq 0$. Assume that $[K : K^p]< \infty$. Then, the HLY conjecture does not hold for $H_I^{n-j}(R)$.
\end{prop}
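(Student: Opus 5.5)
Write $M:=H_I^{n-j}(R)$. I argue by contradiction: assume HLY holds for $M$, so by Lemma~\ref{reformulations of HLY} we have $(0)\in\Ass_R(M^{\vee})$. The strategy is to show that, under the hypotheses, $M^{\vee}$ is a finitely generated $R$--module killed by a power of $p$, or at least one whose associated primes all contain $p$. Then $(0)\notin\Ass_R(M^{\vee})$, which is the desired contradiction.

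\textbf{Step 1: pass to the dual.} Since $p\cdot(-)\colon M\to M$ is surjective, dualizing shows that multiplication by $p$ on $M^{\vee}$ is injective. At first glance this points the wrong way, since it makes $p$ a non--zero divisor on $M^{\vee}$. The real input must therefore be the finiteness of the Bass numbers $\dim_K\Ext^i_R(K,M)$. These give $\Tor_i^R(K,M^{\vee})\cong\Ext^i_R(K,M)^{\vee}$, which is finite dimensional for all $i$. In particular $M^{\vee}/\fm M^{\vee}$ is finite dimensional.

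\textbf{Step 2: finiteness of the dual.} Since $M^{\vee}$ is $\fm$--adically separated and complete, as is any Matlis dual over a complete ring, the finiteness of $M^{\vee}/\fm M^{\vee}$ together with Nakayama for complete modules yields that $M^{\vee}$ is finitely generated. Hence $M$ is Artinian.

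\textbf{Step 3: conclude using Zhang's theorem.} I now use Zhang's framework (Theorem~\ref{zhangfailure}), which is where the assumption $[K:K^p]<\infty$ enters. In mixed characteristic, $M$ carries a Frobenius-type structure modulo $p$: $M/pM$ and the $p$--torsion are $F$--finite $F$--modules over $R/pR$. Here $M/pM=0$. I would use the long exact sequence attached to $0\to R\xrightarrow{p}R\to R/pR\to 0$ to relate $H_I^{n-j}(R/pR)$ with the kernel of $p$ on $M$. An Artinian module whose multiplication by $p$ is surjective then forces $\stsupp$ of $M$ to be $\{\fm\}$. Combined with the $\mathcal{D}$-/$F$--module structure (Proposition~\ref{zero dimensional case}, or Lyubeznik's result in mixed characteristic), this gives $M\cong E^{\oplus w}$ with $w$ finite.

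That would seem to give HLY rather than contradict it. So the actual contradiction must come from the specific assumption that the Bass numbers are nonzero in \emph{every} degree $i$. For $E^{\oplus w}$ the Bass numbers $\mu_i(\fm,E)$ vanish for $i>0$. Nonvanishing of $\Ext^i_R(K,M)$ for all $i$ is therefore incompatible with $M\cong E^{\oplus w}$, and more generally with $(0)\in\coass_R(M)$ combined with Artinianness.

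\textbf{Main obstacle.} The crux is making this incompatibility precise. Concretely, I need to show that if $(0)\in\coass_R(M)$ and $M$ is Artinian with surjective multiplication by $p$, then $M$ has finite injective dimension and equals a sum of copies of $E$. I would first try splitting the surjection $M\twoheadrightarrow E$, using that $E$ is injective, to write $M\cong E\oplus M'$. I would then iterate on $M'$, using the finite socle, so that $M$ is a finite sum of copies of $E$. That contradicts $\Ext^i_R(K,M)\neq 0$ for large $i$.
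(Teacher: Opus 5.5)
\textbf{Verdict.} Your argument has genuine gaps, and the one place where it reaches a real contradiction does not depend on the structure you try to build.

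\textbf{Where the argument fails.} Step~2 rests on a false claim. Matlis duals need not be $\fm$--adically separated: already $R^{\vee}=E$ satisfies $\fm E=E$. So finite dimensionality of $M^{\vee}/\fm M^{\vee}$ does not make $M^{\vee}$ finitely generated, and $M$ need not be Artinian; the example below has finite Bass numbers and support $V(p,x_1)$.

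Step~3 has three further problems. Proposition~\ref{zero dimensional case} requires $R$ to contain a field, which fails in mixed characteristic. Injectivity of $E$ splits monomorphisms $E\to M$, not epimorphisms $M\to E$; multiplication by $0\neq x\in\fm$ on $E$ is a non-split surjection. So $M\cong E\oplus M'$ does not follow. And, as you noticed yourself, this route points towards HLY rather than against it.

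\textbf{What actually proves the statement.} The only genuine contradiction you reach is ``$\Ext^i_R(K,M)\neq 0$ for large $i$''. That needs no structure theory. The Koszul complex on $p,x_1,\ldots,x_n$ resolves $K$, so $\Ext^i_R(K,N)=0$ for every $R$--module $N$ and every $i>n+1=\dim R$. Hence the hypothesis ``$\neq 0$ for all $i\geq 0$'' is never satisfied, and the proposition holds vacuously. That one observation is a complete proof; the rest of your argument should be dropped.

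\textbf{Comparison with the paper.} The paper uses $0\to(0:_M p)\to M\xrightarrow{\cdot p}M\to 0$ together with \cite[Lemma 4.1]{BetancourtWitt2012mixed} to get $(0:_M p)\neq 0$, and then invokes Theorem~\ref{zhangfailure}. Do not try to repair your argument that way. Theorem~\ref{zhangfailure} only gives $\spec(R/pR)\subseteq\stsupp_R(M^{\vee})$, which is compatible with $(0)\in\Ass_R(M^{\vee})$.

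In fact, if ``for all $i$'' is weakened to ``for some $i$'', the statement is false. Take $K$ perfect, $R=V[\![x_1,x_2]\!]$, $I=(p,x_1)$ and $M=H_I^2(R)$. Then:
\begin{itemize}
\item $pM=M$, since $H_I^2(R/pR)=0$;
\item $(0:_M p)\cong H^1_{(x_1)}(R/pR)\neq 0$;
\item $\Ext^i_R(K,M)\cong\Ext^{i+2}_R(K,R)$, which is $K$ for $i=1$ and zero otherwise.
\end{itemize}
Yet HLY holds for $M$. This is the case $\depth R/I=1$ of Theorem~\ref{HStextend}, and it can also be checked directly. Let $A=V[\![x_1]\!]$ and let $E_A$ be the injective hull of $K$ over $A$. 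Then $M=E_A\otimes_A R$, so $M^{\vee}\cong\Hom_A(E_A,E)$. Since $E\cong\bigoplus_{i\geq 1}E_A x_2^{-i}$, this is the module of sequences $(a_i)_{i\geq 1}$ in $A$ with $a_i\to 0$, with $A$ acting diagonally and $x_2$ acting by shift. Comparing $x_1$--adic orders of the lowest nonzero coefficient of a putative annihilator shows that $(x_1^{i!})_{i\geq 1}$ has zero annihilator in $R$.
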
 

 \begin{proof}
Our assumption that $p$ is a coregular element on $H_I^{n-j}(R)$ is equivalent to the following short exact sequence 
\[
\xymatrix{0\ar[r]& (0:_{H_I^{n-j}(R)} (pR))\ar[r]& H_I^{n-j}(R)\ar[r]^-{\cdot p}& H_I^{n-j}(R)\ar[r]& 0.}
\]
Moreover, thanks to \cite[Lemma 4.1]{BetancourtWitt2012mixed}, we know that $\dim_K(\Ext^i_{R/pR}(K,(0:_{H_I^{n-j}(R)} (pR))) \neq 0$, what is equivalent to say that $p$ is a zerodivisor on $H_I^{n-j}(R)$ so $\ker(\xymatrix@1{H_I^{n-j}(R)\ar[r]^-{\cdot p}& H_I^{n-j}(R)}) \neq 0$. Now, we are done by Theorem \ref{zhangfailure}.     
 \end{proof}
We end this part by briefly exploring the HLY conjecture for cohomologically full rings, which were introduced and systematically studied by Dao, De Stefani and Ma in \cite{DaoDeStefaniMacohfull}.

\begin{disc}[Connection with cohomologically full rings]
Let $d\geq 2$ be an integer, let $\mathbb{K}$ be a perfect field of prime characteristic $p$, set $R:=W(\mathbb{K})[\![x_2,\ldots,x_d]\!]$, and let $I\subset R$ be an ideal such that $p\notin I$. Fix an integer $j\geq 0$, and assume that $R/I$ is $j$--th cohomologically full \cite[Definition 1.1]{DaoDeStefaniMacohfull}. This is equivalent to say, by \cite[Proposition 2.1 (5)]{DaoDeStefaniMacohfull}, that we have, setting $K^j (R/I):=\Ext_R^{d-j} (R/I,R)$, a short exact sequence of $R$--modules
\[
\xymatrix{0\ar[r]& K^j (R/I)\ar[r]& H_I^{d-j}(R)\ar[r]& Q\ar[r]& 0,}
\]
where $Q$ is the cokernel of the injection $\xymatrix@1{K^j (R/I)\ar@{^(->}[r]& H_I^{d-j}(R)}$. By \cite[Theorem 2.7 (vi)]{Richardsoncosupport}, we know that
\[
\cosupp_R (K^j (R/I))=\begin{cases}
\{\mathfrak{m}\},\text{ if }K^j (R/I)\neq 0,\\
\emptyset,\text{ if }K^j (R/I)=0.
\end{cases}
\]
This shows, by applying \cite[Theorem 2.7 (v)]{Richardsoncosupport}, that
\[
\cosupp_R (Q)\subseteq\cosupp_R (H_I^{d-j}(R))\subseteq\{\fm\}\cup\cosupp_R (Q).
\]
Summing up, we have shown that $(0)\in\coass_R (H_I^{d-j}(R))$ if and only if $(0)\in\coass_R (Q)$.
\end{disc}

\subsection{The ramified case}

The goal of this part, as the title says, is to focus on the case where our mixed characteristic base ring is ramified. Let us recall some known facts about Eisenstein extensions of normal local rings from \cite[Sec. 29, pp. 229]{Mat86}. Let $(R, \fm)$ be a normal local ring  and let $B= R[X]/f(X)$ be the extension ring defined by the Eisenstein polynomial $f(X) = X^{n}+a_1X^{n-1}+ \ldots +a_n$ with $a_i \in \fm$ for every $i = 1, \ldots , n$ and $a_n\notin\fm^2$. In this setting, the local ring $(B, \fn)$ is called an Eisenstein extension of $R$.

We have borrowed the below result from \cite[page 2]{Bhataramified}. Anyway, we add some extra information, as the reader can appreciate in the following statement.

\begin{lm} \label{Eisenstein}
The following statements hold.

\begin{enumerate}[(i)]

\item An Eisenstein extension of a regular local ring is regular local.

\item An Eisenstein extension of an unramified regular local ring is a ramified regular local ring.

\item Any complete regular local ring is an Eisenstein extension of some unramified complete regular local ring.

\item The Eisenstein extensions mentioned in (i) and (ii) are faithfully flat.  
\end{enumerate}
\end{lm}

\begin{proof}
Part (i) is just \cite[Theorem 29.8 (i)]{Mat86}. Part (ii) is \cite[Theorem 29.8 (ii)]{Mat86}. Part (iii) is precisely \cite[Theorem 17]{Cohen1946}. Finally, part (iv) is \cite[Theorem 23.1]{Mat86}.
\end{proof}

Eisenstein extensions were considered by Bhattacharyya in \cite[Corollary 1]{Bhata2024} to obtain some partial answers to the HLY for ramified complete regular local rings. The interested reader is encouraged to consult \cite[Corollary 1]{Bhata2024} to see the precise statement.

We conclude this section with the following observation, which roughly says that, under certain assumptions, HLY conjecture for a ramified regular local ring can be studied by reduction to the unramified setting.

\begin{rk}
Let $(R, \fm)$ be an unramified regular local ring and $(S,\fn)$ be an Eisenstein extension of $R$ such that $\sqrt{\fm S}=\fn$. Moreover, let $I\subset S$ be an ideal and let $j\geq 0$ be an integer. Then, as a by-product of Lemmas \ref{ascending HLY} and \ref{Eisenstein}, we may reduce the ramified setting to the unramified one provided we restrict the attention to ideals that are extended from $R$.   
\end{rk}

\section{On the failure of HLY in characteristic zero}\label{section: HLY counterexamples}

The main goal of this section is to show that HLY is in general not true for regular local rings containing a field of characteristic zero.

Let $\mathbb{K}$ be a field of characteristic zero and let $R=\mathbb{K}[x_1, \ldots, x_d]$ be the polynomial ring over $\mathbb{K}$ with its standard grading and $\fm=(x_1, \ldots, x_d)$ be the maximal ideal. Suppose that $E=H^d_{\fm}(R)$, the graded top local cohomology module of $R$ supported in $\fm$, is an injective hull of $\mathbb{K}=R/\fm$. For a graded $R$-module $M$, the graded Matlis dual of $M$ denoted by $M^{\vee}=\hbox{}^{\ast}\Hom_R(M,E)$ is the graded $R$-module. Once again, $\mathcal{D}$ will denote the ring of $\mathbb{K}$--linear differential operators on $R$.

\begin{df}
Let $M$ be a left (right) $\mathcal{D}$-module whose underlying $R$-module is $M=\oplus_{l \in \mathbb{Z}}M_l$. $M$ is called a graded $\mathcal{D}$-module if for all $l \in \mathbb{Z}$ and $1 \leq i \leq  n$ one has $\partial_i(M_l) \subseteq M_{l-1}$.
\end{df}

We also want to recall here the so--called holonomic duality.

\begin{df}\label{holonomic dual: definition}
Let $M$ be a left holonomic $\mathcal{D}$-module. The module $\Ext^n_{\mathcal{D}}(M,\mathcal{D})$ is a right $\mathcal{D}$-module. Set $M^*$ as the left $\mathcal{D}$--module attached to $\Ext^n_{\mathcal{D}}(M,\mathcal{D})$. It is known \cite[2.6.8 (iv)]{HoTaTa} to be holonomic, and called the \textit{holonomic dual} of $M$. It is known that holonomic duality is a contravariant functor from the category of holonomic left (or right) $\mathcal{D}$-modules to the category of holonomic right (or left) $\mathcal{D}$-modules, and that $M\cong M^{\ast\ast}$ \cite[2.6.5]{HoTaTa}.
\end{df}
Although we think next lemma is well known to experts, we provide an elementary proof because of the lack of a reference.

\begin{lm}\label{holonomic duality preserves support}
Let $M$ be a holonomic $\mathcal{D}$-module. Then, we have that $\stsupp M =\stsupp M^{\ast}.$ 
\end{lm}

\begin{proof}
Note that, by the own definition of the holonomic dual, $\stsupp M^{\ast} \subseteq \stsupp M$. On the other hand, since $M\cong M^{\ast\ast}$, we have in particular that $\stsupp M =\stsupp M^{\ast \ast}$. In this way, the claim is proved by $\stsupp M^{\ast \ast} \subseteq \stsupp M^{\ast} \subseteq \stsupp M$.
\end{proof}

\begin{rk}\label{support and characteristic variety}
Let $M$ be a holonomic left $\mathcal{D}$--module. It is known \cite[2.6.12]{HoTaTa} that both $M$ and $M^{\ast}$ have the same characteristic variety. However, this is not enough to conclude that they have the same support, as it is pointed out in the following Mathoverflow discussion:
\begin{verbatim}
https://tinyurl.tools/50049f2e
\end{verbatim}
In other words, we can not deduce Lemma \ref{holonomic duality preserves support} from that fact.
\end{rk}

The following result proved by Hartshorne and Polini in \cite[Theorem 6.2]{HartshornePolinisimple} is essential in our attempt to show the failure cases of the HLY.

\begin{prop}\label{transition from graded to formal deRham stuff}
Let $\mathbb{K}$ be a field of characteristic zero, let $R=\mathbb{K}[x_0,\ldots,x_n]$ endowed with its standard $\mathbb{N}_0$--grading, let $I\subset R$ be a homogeneous ideal, and let $j\geq 0$ be an integer. Then, for any integer $i\geq 0$, the natural completion map $\xymatrix@1{H_{dR}^i (H_I^j (R))\ar[r]& H_{dR}^i (H_{I\widehat{R}}^j (\widehat{R}))}$
is an isomorphism.
\end{prop}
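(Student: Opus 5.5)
The isomorphism will come from two facts. First, both de Rham cohomologies can be computed from a single \v{C}ech complex. Second, the completion map is an isomorphism on the graded pieces that actually contribute.

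\textbf{Step 1: reduce to graded pieces.} Since $I$ is homogeneous, $H_I^j(R)$ is a graded holonomic $\mathcal{D}$--module, and $H_{I\widehat{R}}^j(\widehat{R})\cong H_I^j(R)\otimes_R\widehat{R}$ by flat base change. The de Rham complex $\Omega^\bullet(M)$ of a graded $\mathcal{D}$--module $M$ is a complex of graded $\mathbb{K}$--vector spaces. Give $dx_i$ degree $+1$, so that $\partial_i$ (degree $-1$) composed with multiplication by $dx_i$ makes the differential homogeneous of degree $0$.

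\textbf{Step 2: only degree zero survives.} Let $\theta=\sum_i x_i\partial_i$ be the Euler operator. Cartan's homotopy formula $\theta=d\iota+\iota d$, where $\iota$ is contraction with $\sum x_i\,\partial/\partial x_i$, shows that $\theta$ acts as zero on $H_{dR}^\ast(M)$. On the other hand, on the degree-$l$ component of $\Omega^\bullet(M)$, once the form degree is accounted for, $\theta$ acts as multiplication by the nonzero scalar $l$ when $l\neq 0$; here characteristic zero is used. Hence $H_{dR}^i(M)=H^i\bigl(\Omega^\bullet(M)_0\bigr)$.

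\textbf{Step 3: compare with the completion.} On the completed side I would use a \v{C}ech presentation. Pick homogeneous generators $f_1,\dots,f_s$ of $I$. Then $H_I^j(R)$ is a subquotient of the modules $R_{f_{i_1}\cdots f_{i_k}}$, and $H_{I\widehat{R}}^j(\widehat{R})$ is the same subquotient of the modules $\widehat{R}_{f_{i_1}\cdots f_{i_k}}$. This reduces the claim to comparing, for a homogeneous $f$, the de Rham complexes of $R_f$ and $\widehat{R}_f$. It suffices to show that the comparison map $\Omega^\bullet(R_f)\to\Omega^\bullet(\widehat{R}_f)$ is a quasi-isomorphism; then a spectral sequence / five-lemma argument on the \v{C}ech double complex gives the result for $H_I^j$.

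\textbf{Step 3, the key point.} An element of $\widehat{R}_f$ has the form $g/f^k$ with $g$ a power series. Decomposing $g$ into homogeneous components writes the element as a formal sum of homogeneous pieces $g_l/f^k$ of degrees $l-k\deg f$. The Euler operator still acts on each piece by its degree. So the completed complex is a product over degrees of the graded pieces of the algebraic complex, modulo a mild check. On the pieces of nonzero degree, $\theta$ is invertible: $\theta$ is homotopic to zero and acts invertibly there, and since the homotopy respects the product decomposition, that part of the complex is acyclic. The degree-zero part is identical on both sides.

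\textbf{Main obstacle.} The hardest step is making Step 3 precise. Elements of $\widehat{R}_f$ are infinite sums, and $\widehat{R}_f$ is not literally $\prod_l (R_f)_l$. I would handle this by showing directly that the subcomplex of $\Omega^\bullet(\widehat{R}_f)$ on which $\theta$ acts with only nonzero eigenvalues is acyclic. To do this, I would construct the inverse of $\theta$ on that subcomplex degreewise as a formal sum (dividing the degree-$l$ piece by $l$) and check that it preserves $\widehat{R}_f$, which holds because the denominator $f^k$ is fixed.
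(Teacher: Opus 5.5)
The paper does not prove this statement; it imports it from Hartshorne--Polini \cite[Theorem 6.2]{HartshornePolinisimple}, so your argument has to stand on its own. Steps 1--2 are sound, and so is the computation on a single $\widehat{R}_f$: the degree projections are independent of the representative $g/f^k$, and the degreewise inverse of the Euler operator with fixed denominator is fine. The gap is the sentence ``then a spectral sequence / five-lemma argument on the \v{C}ech double complex gives the result for $H_I^j$''.

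A columnwise quasi-isomorphism $\Omega^\bullet(R_f)\to\Omega^\bullet(\widehat{R}_f)$ only shows that the map on the \emph{total} cohomology of the \v{C}ech--de Rham double complex is an isomorphism. The other spectral sequence, with $E_2$--terms $H_{dR}^q(H_I^p(R))$, converges to that same total cohomology. But an isomorphism on abutments does not imply isomorphisms on the $E_2$--terms; the comparison theorem runs from $E_r$ to the abutment, not back. A five-lemma induction does not close either. Splitting the \v{C}ech complex into short exact sequences brings in cycle and boundary modules that are not localizations, together with the unknown $H_I^p$ themselves. The underlying issue is that $M\mapsto H^i_{dR}(M)$ is not exact, so a quasi-isomorphism on the terms of a complex does not pass to its cohomology modules.

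What you actually need is that your splitting is natural, not merely that it exists. Consider the projections onto total degree zero and the inverse of $\theta+q$ on $q$--forms of nonzero total degree; it is $\theta+q$, not $\theta$, that commutes with $\partial_i$ and with contraction. These operators are compatible with the localization maps $\widehat{R}_f\to\widehat{R}_{fh}$, hence with the \v{C}ech differentials. So they descend to $H^j_{I\widehat{R}}(\widehat{R})$, and so does the contracting homotopy on the nonzero-degree part.

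Alternatively, skip \v{C}ech complexes altogether. Take $M=H_I^j(R)$ and $\widehat{M}=M\otimes_R\widehat{R}$. For $m\in M_a$, define $P_l(m\otimes g)=g_{l-a}m$ and $T_q(m\otimes g)=m\otimes\sum_{k\neq -a-q}(a+k+q)^{-1}g_k$. Both are $R$--balanced. Assembled over all $q$, they commute with $d$ and with $\iota$, and they satisfy $(\theta+q)T_q=1-P_{-q}$. This splits $\Omega^\bullet(\widehat{M})$ as $\Omega^\bullet(M)_0$ plus a contractible complex, compatibly with the natural map.

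Both versions, and your Step 2, use that $\theta$ acts on $H_I^j(R)_a$ as the scalar $a$. This is not automatic for a graded $\mathcal{D}$--module in the paper's sense. You should justify it from Euler's identity on homogeneous fractions in the \v{C}ech complex built on homogeneous generators of $I$.
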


Another ingredient we plan to use here is the following nice connection between de Rham cohomology and Lyubeznik numbers for regular holonomic $\mathcal{D}$--modules proved by Reichelt, Walther and Zhang \cite[Lemma 1.4]{ReicheltWaltherZhang}. We only state this result in the generality we need. For the definition of regular holonomic $\mathcal{D}$--module the reader is referred to \cite[Chapter 6]{HoTaTa}.

\begin{lm}\label{de Rham and Lyubeznik numbers}
Let $R=\mathbb{C}[x_1,\ldots,x_d]$, let $\fm=(x_1,\ldots,x_d)$, and let $M$ be a graded $R$--module which is also a regular holonomic $\mathcal{D}$--module. Then, for any integer $i\geq 0$ we have that
\[
\dim_{\mathbb{C}}(H_{dR}^i (M))=\dim_{\mathbb{C}} (\Hom_R (\mathbb{C},H_{\fm}^{d-i} (M^{\ast})).
\]
\end{lm}

The key result in order to prove the potential failure of the HLY conjecture in characteristic zero is given in the following statement. This is the main result of this section

\begin{teo} \label{1stfailure}
Let $R=\mathbb{C}[x_1, \ldots, x_d]$, and $I$ be a homogeneous non-$\fm$-primary ideal of $R$. Assume that $H^j_I(R) \neq 0$ and, in addition, suppose that $H^0_{\fm}(H^j_I(R)^*)=0$. Then, we have that HLY conjecture does not hold for $H^j_{I\hat{R}}(\hat{R})$.
\end{teo}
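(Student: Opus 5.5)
By Corollary \ref{derhamdim}, applied to $\widehat{R}=\mathbb{C}[\![x_1,\ldots,x_d]\!]$ and the ideal $I\widehat{R}$, HLY fails for $H^j_{I\widehat{R}}(\widehat{R})$ exactly when
\[
H_{dR}^d(H^j_{I\widehat{R}}(\widehat{R}))=0,
\]
provided this module is nonzero. The plan is therefore to compute this top de Rham cohomology on the polynomial side and show that it vanishes.

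First, $H^j_{I\widehat{R}}(\widehat{R})\cong H^j_I(R)\otimes_R\widehat{R}\neq 0$, since $\widehat{R}$ is faithfully flat over the localization at $\fm$. We also need $H^j_I(R)_{\fm}\neq 0$; this holds because $I$ is homogeneous, so $H^j_I(R)$ is graded, and every nonzero graded module has $\fm$ in its support. Next, Proposition \ref{transition from graded to formal deRham stuff} gives
\[
H_{dR}^d(H^j_{I\widehat{R}}(\widehat{R}))\cong H_{dR}^d(H^j_I(R)).
\]
A mild reindexing is needed here, since that proposition is stated with variables $x_0,\ldots,x_n$.

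Then I apply Lemma \ref{de Rham and Lyubeznik numbers} to $M=H^j_I(R)$. This $M$ is graded, and it is a regular holonomic $\mathcal{D}$--module because local cohomology modules of $R$ are regular holonomic: they arise from $R$ through \v{C}ech complexes of localizations, which preserve regular holonomicity. Taking $i=d$ in the lemma gives
\[
\dim_{\mathbb{C}}H_{dR}^d(M)=\dim_{\mathbb{C}}\Hom_R(\mathbb{C},H^0_{\fm}(M^{\ast})).
\]
By hypothesis $H^0_{\fm}(M^{\ast})=0$, so the right-hand side vanishes. Hence $H_{dR}^d(H^j_{I\widehat{R}}(\widehat{R}))=0$, and Corollary \ref{derhamdim} shows that HLY does not hold at $j$ for $I\widehat{R}$.

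The main obstacle is verifying the hypotheses of the imported results. We must check that $M^{\ast}$ carries a compatible grading and that Lemma \ref{de Rham and Lyubeznik numbers} really applies to $M=H^j_I(R)$ with the holonomic dual as defined here. We also need the regular holonomicity of $H^j_I(R)$; I would cite its stability under localization and cohomology.

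The assumption that $I$ is non-$\fm$-primary appears to play no role in the argument above. It presumably guarantees consistency with Proposition \ref{the top cdim case}. Indeed, if $I$ were $\fm$--primary, then $M=E$, $M^{\ast}$ would also be supported at $\fm$, and the hypothesis $H^0_{\fm}(M^{\ast})=0$ would fail. I would remark on this but not rely on it. As a sanity check, Lemma \ref{holonomic duality preserves support} shows $M^{\ast}\neq0$.
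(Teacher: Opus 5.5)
Your proposal is correct and follows the paper's proof. The paper likewise applies Lemma \ref{de Rham and Lyubeznik numbers} with $i=d$ to get $H^d_{dR}(H^j_I(R))=0$, transfers this to $\widehat{R}$ via Proposition \ref{transition from graded to formal deRham stuff}, and concludes with Corollary \ref{derhamdim}.

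Your check that $H^j_{I\widehat{R}}(\widehat{R})\neq 0$ (gradedness forces $\mathfrak{m}$ into the support) fills a step the paper leaves implicit. You are also right that the non-$\mathfrak{m}$-primary hypothesis plays no role in the argument.
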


\begin{proof}
Keeping in mind that $H_I^j (R)$ is a regular holonomic $\mathcal{D}$--module and $I$ is a homogeneous ideal, we can use Lemma \ref{de Rham and Lyubeznik numbers} to conclude that
\[
\dim_{\mathbb{C}} (H_{dR}^d (H_I^j (R)))=\dim_{\mathbb{C}} (\Hom_R (\mathbb{C},H^0_{\fm}(H^j_I(R)^{\ast}))=0.
\]
Summing up, we have shown that $H^{d}_{dR}(H^j_I(R))=0$. Now, we are done by combining Corollary \ref{derhamdim} and Proposition \ref{transition from graded to formal deRham stuff}. The proof is therefore completed.
\end{proof}
We conclude this section by raising a question that is intimately connected with Theorem \ref{1stfailure}, as the reader can easily appreciate.

\begin{quo}
Under which conditions we can guarantee that the holonomic dual of a $D$--module is $\mathfrak{m}$--torsion free?
\end{quo}

\section{HLY conjecture and pure morphisms}\label{section: HLY and squarefree monomial ideals}
The main purpose of this section is to prove the following:

\begin{teo}[HLY holds for squarefree monomial ideals]\label{HLY for squarefree monomial ideals}
Let $\mathbb{K}$ be any field, let $R=\mathbb{K}[\![x_1,\ldots,x_d]\!]$, and let $I\subset R$ be a squarefree monomial ideal. Then, for any integer $j\geq 0$ with $H_I^j (R)\neq 0$, we have that $(0)\in\coass_R (H_I^j (R))$.
\end{teo}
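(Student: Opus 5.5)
The plan is to reduce to prime characteristic. The reduction uses the fact that local cohomology with support in a squarefree monomial ideal is essentially independent of the characteristic, and that it is a direct summand, via a pure morphism, of a module to which the Lyubeznik--Yildirim argument applies. This follows the Singh--Walther approach announced in the introduction.

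First, if $\charac \mathbb{K}=p>0$ we are done. Indeed, $H_I^j(R)$ is an $F$--finite $F$--module, and since $R$ is a domain containing a field, the Huneke--Koh--Lyubeznik theorem recalled in Discussion \ref{HLY not in the finitely generated case} gives $(0)\notin\Ass_R(H_I^j(R))$. Theorem \ref{HLY in prime characteristic} then yields $(0)\in\Ass_R(H_I^j(R)^{\vee})$, which is equivalent to the claim by Lemma \ref{reformulations of HLY}.

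So assume $\charac\mathbb{K}=0$. By Corollary \ref{derhamdim}, it suffices to show $H_{dR}^d(H_I^j(R))\neq 0$. Here the squarefree monomial structure enters. Since $I$ is generated by monomials, Proposition \ref{transition from graded to formal deRham stuff} (or its $\mathbb{Z}^d$--graded analogue) lets us compute $H_{dR}^d$ on the polynomial ring $\mathbb{K}[x_1,\ldots,x_d]$. There, $H_I^j$ is $\mathbb{Z}^d$--graded, with graded pieces described combinatorially (Mustaţă, Terai, Yanagawa): each piece is a reduced (co)homology group of a link of the Alexander dual $\Delta^\vee$ with coefficients in $\mathbb{K}$, and these are independent of $\mathbb{K}$ except through the characteristic. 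Moreover, $H_I^j$ is a straight module whose derivatives $\partial_i$ act by the combinatorial structure maps, multiplied by the degree. The top de Rham cohomology is then $H_I^j(R)/\sum_i\partial_i H_I^j(R)$. In multidegree $a$ with some $a_i\neq 0$, the operator $\partial_i x_i$ acts as multiplication by the nonzero scalar $a_i+1$ or similar, so those pieces die (up to the $-\mathbf{1}$ shift). The cokernel is therefore concentrated in the multidegree $-\mathbf{1}=(-1,\ldots,-1)$, where it equals the graded piece $H_I^j(R)_{-\mathbf{1}}$, which by Hochster/Terai equals $\widetilde{H}^{d-j-1}(\Delta^{\vee};\mathbb{K})$ (the top piece). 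I expect $\dim_{\mathbb{K}}H_{dR}^d(H_I^j(R))=\dim_{\mathbb{K}}H_I^j(R)_{-\mathbf{1}}$.

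Finally, I would show that this piece is nonzero whenever $H_I^j(R)\neq 0$. The cleanest route is the pure-morphism trick: the Frobenius-like map $x_i\mapsto x_i^{\ell}$ preserves $I$ up to radical and makes $H_I^j(R)$ a module in which every nonzero graded piece maps isomorphically onto a piece of degree in $\{0,-1\}^d$ shifts. Yanagawa's straightness shows that the pieces in degrees $a\in\mathbb{Z}^d$ depend only on the sign pattern, and that multiplication by $x_i$ on negative coordinates is an isomorphism. Hence if $H_I^j(R)_a\neq0$, then $H_I^j(R)_{-\mathbf{1}}\neq 0$. This is the \v{C}ech-complex analogue of Lyubeznik--Yildirim's argument that the socle-type degree controls everything. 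Alternatively, one can use that $H_I^j(R)\neq0$ iff $\widetilde{H}$ of some link is nonzero, and push that link to the full complex via the straight module maps.

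This last step is the main obstacle. One must verify carefully that nonvanishing of some graded piece forces nonvanishing in degree $-\mathbf{1}$; links can have homology while $\Delta^\vee$ itself is acyclic, so the monotonicity must come from the module structure, not from combinatorics alone. If this fails, I would instead transfer from characteristic $p$: the graded pieces are characteristic-free only over $\mathbb{Q}$ versus $\mathbb{F}_p$ for suitable $p$ (universal coefficients), so the coassociated-prime statement from the positive characteristic case could be compared degreewise.
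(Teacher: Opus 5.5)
\textbf{There is a genuine gap in the characteristic zero step.} Your characteristic $p$ step is fine. (Note that $(0)\notin\Ass_R(H_I^j(R))$ needs no Huneke--Koh--Lyubeznik: the module is $I$-torsion and $I\neq 0$.)

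The problem is that the characteristic zero step reduces the theorem to a statement that is false, so the gap cannot be closed along these lines. The number $\dim_{\mathbb{K}}H^d_{dR}(H_I^j(R))$ counts $\mathcal{D}$-linear surjections onto $E$. HLY only asks for an $R$-linear quotient of $H_I^j(R)$ inside $E$ with zero annihilator, and the latter can exist without the former.

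\emph{A counterexample.} Take $d=2$, $I=(x_1)$, $j=1$.
\begin{enumerate}
\item[(1)] Every element of $H^1_{(x_1)}(R)$ is a finite sum $\sum_{k\geq 1}x_1^{-k}h_k(x_2)$ with $h_k\in\mathbb{K}[\![x_2]\!]$. The terms with $k\geq 2$ lie in $\partial_1 H^1_{(x_1)}(R)$. The term $x_1^{-1}h(x_2)$ lies in $\partial_2 H^1_{(x_1)}(R)$, since you can integrate in $\mathbb{K}[\![x_2]\!]$. Hence $H^2_{dR}(H^1_{(x_1)}(R))=0$.
\item[(2)] In your graded language, $H^1_{(x_1)}(\mathbb{K}[x_1,x_2])$ is spanned by the $x_1^{a_1}x_2^{a_2}$ with $a_1\leq -1$ and $a_2\geq 0$. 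So its $(-1,0)$-piece is $\mathbb{K}$ while its $(-1,-1)$-piece is zero. This is exactly where your monotonicity claim breaks: multiplication by $x_i$ from $a_i=-1$ to $a_i=0$ is precisely the map that straightness does not make bijective. Combinatorially, the Stanley--Reisner complex here is a cone.
\item[(3)] Nevertheless HLY holds here (item (i) of the Introduction). Directly: identify $H^1_{(x_1)}(R)^{\vee}=\varprojlim_n(0:_E x_1^n)$ with sequences $(\delta_0,\delta_1,\ldots)$ in $E_{\mathbb{K}[\![x_2]\!]}$, where $x_1$ acts by the shift $(\delta_l)\mapsto(0,\delta_0,\delta_1,\ldots)$. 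The element with $\delta_l=x_2^{-N_l}$ and $N_l-N_{l-1}\to\infty$ has zero annihilator.
\end{enumerate}
The same phenomenon occurs whenever some variable is missing from the generators of $I$, because $\partial_i$ is then surjective. So Corollary \ref{derhamdim} is only usable in the direction you invoke it, and that direction is not enough.

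Your fallback, comparing with characteristic $p$ degreewise via universal coefficients, is not an argument as it stands. The condition $(0)\in\coass_R(H_I^j(R))$ asks for one element of the non-graded inverse limit $H_I^j(R)^{\vee}$ with zero annihilator. Nothing in the proposal transports that across characteristics.

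\textbf{The missing idea} is the one the paper uses, which needs neither a case split nor $\mathcal{D}$-modules.
\begin{enumerate}
\item[(a)] Let $\varphi$ be the flat endomorphism $x_i\mapsto x_i^2$, and $\Phi$ the associated base-change functor.
\item[(b)] Since $\varphi(I)R=I^{[2]}$ for monomial $I$, $H_I^j(R)$ is a finite $\Phi$-module with root $\Ext^j_R(R/I,R)$. For squarefree $I$ the maps $\Ext^j_R(R/I^{[2^k]},R)\to\Ext^j_R(R/I^{[2^{k+1}]},R)$ are injective with colimit $H_I^j(R)$.
\item[(c)] Run the Lyubeznik--Yildirim argument with $\Phi$ in place of Frobenius (Theorem \ref{LY for phi-modules}). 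Because $\Phi$ commutes with Matlis duality on Matlis reflexive modules, $H_I^j(R)^{\vee}=\varprojlim\Phi^k(N)$ with $N$ Artinian and surjective transition maps with nonzero kernel.
\item[(d)] Compatible socle elements then have annihilators $\fm^{[2^{k-1}]}$. From them one assembles an element whose $k$-th components have annihilators inside $\fm^k$, so its annihilator is $0$ by Krull's intersection theorem.
\end{enumerate}
You do mention the map $x_i\mapsto x_i^{\ell}$, but only to control graded pieces. The point is to use it to build an element of the Matlis dual directly.
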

The proof of this result will be an adaptation of the proof presented by Lyubeznik and Yildirim in the prime characteristic setting, using for that purpose some of the results obtained by Singh and Walther in \cite{SinghWalther2007}.

\begin{nt}\label{setup for flat maps}
Along this section, $R=\mathbb{K}[\![x_1,\ldots,x_d]\!]$,  $\xymatrix@1{R\ar[r]^{\varphi}& R}$ will denote a flat local ring endomorphism, and let $\Phi$ be the functor as defined in \cite[Remark 2.6]{SinghWalther2007}. That is, for any $R$--module $M$, $\Phi (M)=\varphi_* (M)\otimes_R M$, where $\varphi_* (M)$ denotes the underlying abelian group $M$ regarded as left $R$--module via restriction of scalars under $\varphi$. In this setting, we have a natural $R$--module map $\xymatrix@1{M\ar[r]^-{\psi_M}& \Phi (M)}$ sending $m\in M$ to $\varphi_* 1\otimes m$.
\end{nt}

Our next definition is a mild generalization of Lyubeznik $\mathcal{F}$--modules \cite{Lyu1997} in this setting.

\begin{df}
We say that $\mathcal{M}$ is a \textit{$\Phi$--module} if it is an $R$--module that admits an $R$--module isomorphism $\xymatrix@1{\mathcal{M}\ar[r]^-{\sim}& \Phi(\mathcal{M})}$. On the other hand, we say that $\mathcal{M}$ is a \textit{finite $\Phi$--module} if there is a finitely generated $R$--module $M$ and an injective $R$--module homomorphism $\xymatrix@1{M\ar[r]^-{\beta}& \Phi (M)}$ such that
\[
\mathcal{M}\cong\colim\left(\xymatrix{M\ar[r]^-{\beta}& \Phi (M)\ar[r]^{\Phi (\beta)}& \Phi^2 (M)\ar[r]& \ldots}\right)
\]
Here, $\colim$ means direct limit. In that situation, we say that $M$ is a \textit{root} of $\mathcal{M}$.
\end{df}
Before going on, we review the following technical statement.

\begin{lm}\label{equivalence on Matlis reflexive}
The natural transformation $\tau: \Phi\circ (-)^{\vee}\Longrightarrow (-)^{\vee}\circ\Phi$ is an equivalence when restricted to the class of Matlis reflexive $R$--modules. 
\end{lm}

\begin{proof}
By Enochs--Zink Theorem, a module $M$ is Matlis reflexive if and only if it fits into a short exact sequence $\xymatrix@1{0\ar[r]& M'\ar[r]& M\ar[r]& M''\ar[r]& 0,}$ where $M'$ is a finitely generated $R$--module and $M''$ is an Artinian $R$--module. Since both $\Phi\circ (-)^{\vee}$ and $(-)^{\vee}\circ\Phi$ are exact, we end up with the following commutative diagram with exact rows.
\[
\xymatrix{0\ar[r]& \Phi (M'^{\vee})\ar[d]_{\tau_{M'}}\ar[r]& \Phi (M^{\vee})\ar[d]_{\tau_M}\ar[r]& \Phi (M''^{\vee})\ar[d]_{\tau_{M''}}\ar[r]& 0\\
0\ar[r]& \Phi (M')^{\vee}\ar[r]& \Phi (M)^{\vee}\ar[r]&\Phi (M'')^{\vee}\ar[r]& 0.}
\]
In this way, by Five's Lemma, it is enough to show that $\tau$ is an equivalence when restricted to the class of finitely generated and Artinian $R$--modules.

On the one hand, let $M'$ be a finitely generated $R$--module, and fix a presentation
\[
\xymatrix{R^{\oplus b_1}\ar[r]& R^{\oplus b_0}\ar[r]& M\ar[r]& 0.}
\]
Once again by Five's Lemma jointly with exactness, in order to see that $\tau$ is an equivalence for finitely generated $R$--modules, it is enough to show that $\Phi (R^{\vee})\cong \Phi (R)^{\vee}$. And this is true, as shown by Singh and Walther  in \cite[Remark 2.6 (3)]{SinghWalther2007}.

On the other hand, a similar argument applies for the class of Artinian $R$--modules, mainly because in a local ring, a module $M''$ is Artinian if and only if it fits into a short exact sequence
\[
\xymatrix{0\ar[r]& M''\ar[r]& E^{\oplus m_0}\ar[r]& E^{\oplus m_1}}.
\]
The proof is therefore completed.
\end{proof}
Actually, we plan to prove the following mild generalization of \cite[Theorem 1.1]{LyubeznikYildirim}. As the reader will easily see, we only plan to single out along the proof the main differences between our setting and the proof of \cite[Theorem 1.1]{LyubeznikYildirim}, because Lyubeznik--Yildirim's argument almost works in our situation up to the changes we plan to explain now.

\begin{teo}\label{LY for phi-modules}
If $\mathcal{M}$ is a finite $\Phi$--module and $(0)\notin\Ass_R (\mathcal{M})$, then $(0)\in\coass_R (\mathcal{M})$.
\end{teo}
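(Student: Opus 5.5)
We follow the strategy of Lyubeznik--Yildirim, replacing the Frobenius by the flat local endomorphism $\varphi$ and the Frobenius functor by $\Phi$. The goal is to reduce $(0)\in\coass_R(\mathcal{M})$ to $(0)\in\Ass_R(\mathcal{M}^{\vee})$. We fix a root $\beta\colon M\hookrightarrow\Phi(M)$ of $\mathcal{M}$ with $M$ finitely generated. Then $\mathcal{M}^{\vee}$ is the inverse limit of the system
\[
\cdots\to \Phi^2(M)^{\vee}\to\Phi(M)^{\vee}\to M^{\vee},
\]
whose maps are $\Phi^e(\beta)^{\vee}$, all surjective since $\beta$ is injective and $\Phi$ is exact.

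By Lemma \ref{equivalence on Matlis reflexive}, applied to the Matlis reflexive module $M$, we identify $\Phi^e(M)^{\vee}\cong\Phi^e(M^{\vee})$. So $\mathcal{M}^{\vee}\cong\lim_e\Phi^e(M^{\vee})$, and the transition maps are the $\Phi^e$ of the single surjection $\beta^{\vee}\colon \Phi(M^{\vee})\twoheadrightarrow M^{\vee}$. This is the analogue of the ``$F$--finite dual'' structure in \cite{LyubeznikYildirim}: the Artinian module $N:=M^{\vee}$ carries a map $\Phi(N)\to N$, and $\mathcal{M}^{\vee}$ is the associated inverse limit.

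The hypothesis $(0)\notin\Ass_R(\mathcal{M})$ passes to the root: $(0)\notin\Ass_R(M)$, so $M$ is torsion. We then have to produce an element of $\mathcal{M}^{\vee}$ whose annihilator is $0$. Following Lyubeznik--Yildirim, we first reduce to the case where $M$ is killed by a nonzero $f\in R$ and $\mathcal{M}\neq 0$. Then $N$ is an Artinian module annihilated by $f$, and we construct a compatible sequence $(\eta_e)\in\lim_e\Phi^e(N)$ such that, for every nonzero $g\in R$, some component $g\eta_e$ is nonzero. The key point is growth: $\Phi^e(N)$ is killed by $\varphi^e(f)$, and we must show that the ideals $\varphi^e(f)R$ become ``arbitrarily deep''. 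This is where we need that $\varphi$ increases $\fm$-adic order, i.e.\ $\varphi(\fm)\subseteq\fm^2$, as the Frobenius does. We would either add this assumption to the setup or use it in the intended application, where $\varphi(x_i)=x_i^p$. The upshot is that for fixed $g$, eventually $g\notin\varphi^e(f)R$-type obstructions, which gives a nonzero image of $g$ on some $\Phi^e(N)$ that is compatible with the chosen thread.

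The main obstacle is precisely this last step, together with making the choice of thread compatible: we need a Mittag-Leffler/socle argument showing that the images of $\lim_e\Phi^e(N)\to\Phi^e(N)$ are large, as in \cite{LyubeznikYildirim}. There the argument uses that $F^e(E)\cong E$ and that $\Phi$ commutes with the socle of $E$, via \cite[Remark 2.6]{SinghWalther2007}. I would try to mimic it directly: choose a socle element of $N$, lift it through the surjections using $\Phi^e(E)\cong E$, and check that the resulting thread has zero annihilator, since the annihilators $(0:_R\eta_e)$ are contained in ideals of the form $\varphi^e(\fm)R$, whose intersection is $0$ by Krull's intersection theorem. The remaining steps are formal consequences of exactness of $\Phi$ and Lemma \ref{equivalence on Matlis reflexive}.
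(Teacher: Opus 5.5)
Your first step matches the paper: via Lemma~\ref{equivalence on Matlis reflexive}, $\mathcal{M}^{\vee}\cong\lim_e\Phi^e(N)$ with $N=M^{\vee}$ Artinian and surjective transition maps $\Phi^{e}(\alpha)$, $\alpha=\beta^{\vee}$. You are also right that some order--raising hypothesis on $\varphi$ is needed; the paper takes $\varphi(x_i)=x_i^t$. But the step that carries all the content, producing a thread with zero annihilator, is missing, and the plan you sketch for it fails.

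Your growth heuristic points the wrong way. The fact that $\varphi^e(f)\in(0:_R\eta_e)$ is a \emph{lower} bound on the annihilators, whereas you need $(0:_R\eta_e)\subseteq\fm^{k}$ with $k\to\infty$. Moreover, $g\notin\varphi^e(f)R$ says nothing about whether $g\eta_e\neq 0$.

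Likewise, an arbitrary lift $(\eta_e)$ of a socle element $s\in N$ need not satisfy $(0:_R\eta_e)\subseteq\varphi^e(\fm)R$. That inclusion holds for $\psi^e(s)$, but $(\psi^e(s))_e$ is not a compatible sequence. In fact the claim is false. Take $R=\mathbb{K}[\![x,y]\!]$, $\varphi(x)=x^2$, $\varphi(y)=y^2$, and $\mathcal{M}=H^1_{(x)}(R)$ with root $M=x^{-1}R/R$.
The elements of $\mathcal{M}^{\vee}$ killed by $y$ form $(\mathcal{M}/y\mathcal{M})^{\vee}\cong H^1_{(x)}(R/yR)^{\vee}$. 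The root $M$ maps onto the nonzero line spanned by the class of $x^{-1}$ in $H^1_{(x)}(R/yR)$. A functional not vanishing on that class restricts to a nonzero socle element of $N$. Hence the one--dimensional socle of $N$ lifts to elements of $\mathcal{M}^{\vee}$ annihilated by $y$, consistent with $(y)\in\Ass_R(H^1_{(x)}(R)^{\vee})$ from the Introduction.

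The missing idea is to work inside the \emph{kernels} of the transition maps. This is where $(0)\notin\Ass_R(\mathcal{M})$ genuinely enters; in your proposal it only produces $f$, which then does no work. The argument runs as follows.

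Since $M\neq 0$ is torsion, $\beta$ is not surjective. Indeed, an isomorphism $M\cong\Phi(M)$ would give $J=\varphi(J)R$ for every Fitting ideal $J$ of $M$, hence $J\in\{0,R\}$ by Krull, so $M$ would be free. Therefore $\ker(\alpha)\cong\operatorname{coker}(\beta)^{\vee}\neq 0$.

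Take a nonzero socle element $b_1\in\ker(\alpha)$ and set $b_k:=\psi(b_{k-1})$. By naturality of $\psi$, $b_k\in\ker(\Phi^{k-1}(\alpha))$, and by flatness $(0:_R b_k)=\fm^{[t^{k-1}]}$. Since $\operatorname{gr}_{\fm}R$ is a domain, $(0:_R yb_k)\subseteq\fm^{t^{k-1}-k}$ whenever $y\notin\fm^k$.

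Now build the thread inductively. Given $n_{k-1}$, take any lift $n'_k\in\Phi^k(N)$. If some $y\in(0:_R n'_k)$ lies outside $\fm^k$, use instead the lift $n'_k+b_k$. Every $z$ annihilating it satisfies $zyb_k=0$, hence $z\in\fm^k$ once $k\geq 4$. The resulting element of $\mathcal{M}^{\vee}$ has annihilator contained in $\bigcap_k\fm^k=0$ by Krull's intersection theorem.

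This Lyubeznik--Yildirim mechanism, which the paper adapts, is what your proposal lacks.
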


\begin{proof}
By Lemma \ref{equivalence on Matlis reflexive}, we can guarantee that there is a natural equivalence of functors $\Phi\circ (-)^{\vee}\cong (-)^{\vee}\circ\Phi$ when restricted to the class of Matlis--reflexive $R$--modules. This implies that \cite[Lemma 3.2]{LyubeznikYildirim} holds in our setting, meaning that we can write
\[
\mathcal{M}^{\vee}=\lim\left(\xymatrix{N& \Phi (N)\ar[l]_{\alpha}& \Phi^2 (N)\ar[l]_{\Phi (\alpha)}& \ldots\ar[l]}\right),
\]
where $N$ is an Artinian $R$--module, $\alpha$ is surjective and $\ker (\alpha)\neq 0$.

Now, we want to prove \cite[Lemma 3.2]{LyubeznikYildirim}. In this case, we need to assume that $\varphi$ is the map of $\mathbb{K}$--algebras such that $\varphi (x_i)=x_i^t$ for some $t\geq 1$ and any $1\leq i\leq d$. Under this additional assumption, we can define an element $b_{\bullet}=(b_k)_{k\geq 1}\in\mathcal{M}^{\vee}$ as follows. For $k=1$, choose $b_1$ a non--zero element in the socle of $\ker (\alpha)$, which exists because $\ker (\alpha)$ is a non--zero Artinian module. For $k\geq 2$, set $b_k:=\psi_{\Phi^{k-1}(M)}(b_{k-1})$, where $\psi$ is as in Notation \ref{setup for flat maps}. With this choice, we can ensure that $(0:_R b_k)=\fm^{[t^{k-1}]}$ for any $k\geq 1$. Let us prove this last claim for the convenience of the reader. For $k=1$, it is clear that $(0:_R b_1)=\fm$ because $b_1$ belongs to the socle of $\ker (\alpha)$. For $k\geq 2$, we only have to keep in mind that $(0:_R b_k)=(0:_R b_{k-1})^{[t]}$. Our claim follows combining that two facts jointly with increasing induction on $k$.

The next step is \cite[Lemma 3.3]{LyubeznikYildirim}, which in our setting reads as follows: given $y\in\fm-\fm^k$, we have that $(0:_R yb_k)\subset\fm^{t^{k-1}-k}$. In particular, $(0:_R yb_k)\subseteq\fm^k$ for $k\geq 4$. The proof of \cite[Lemma 3.3]{LyubeznikYildirim} also works in our setting, because our ambient ring is a formal power series ring over a field, which in particular satisfies the two properties we need: an integral domain where we can perform division. In this way, all the arguments given in \cite[Lemma 3.3]{LyubeznikYildirim} hold, just keeping in mind that we have to write in our case $t^{k-1}$ instead of $p^{k-1}$.

Finally, we come to the proof, which is also a mild adaptation of \cite[Theorem 1]{LyubeznikYildirim}. First, it is possible to construct $n'_{\bullet}\in\mathcal{M}^{\vee}$ such that $(0:_R n'_k)\subseteq\fm^k$ for any $k\geq 4$ with the same argument given along the proof of \cite[Theorem 1]{LyubeznikYildirim}. The fact that this element has a zero annihilator comes at the end to the use of Krull's intersection theorem \cite[Corollary 5.4]{Eisenbud1995}, which also works in our setting because we are working on a Noetherian local ring. The proof is therefore completed.
\end{proof}

Now, we finally come to the proof of the main result of this section.

\begin{proof}[Proof of Theorem \ref{HLY for squarefree monomial ideals}]
Set $\varphi$ as the $\mathbb{K}$--algebra endomorphism given by raising each variable to its square. In this case, we have that $H_I^j (R)$ is a finite $\Phi$--module with $\Ext_R^j (R/I,R)$ as root. In this way, the result follows immediately from Theorem \ref{LY for phi-modules}.
\end{proof}

\section*{Acknowledgements}
Majid Eghbali received partial support by a grant from IPM (No.1404130017). Alberto F. Boix received partial support by a grant PID2022-137283NB-C22 funded by the Spanish Ministerio de Ciencia, Innovacci\'on y Universidades (MICIU/AEI/10.13039/501100011033). The second named author would like to dedicate the paper to Rostam--e--Dastaan.

\section*{Data sharing}
Data sharing not applicable because no new data were created or analyzed in this study.

\section*{Conflict of interest}
There is no conflict of interest.

\bibliographystyle{alpha}
\bibliography{AFBoixReferences}

\end{document}